





\documentclass[sn-mathphys]{JORSC-jnl}



\jyear{2023}%

\theoremstyle{thmstyleone}%
\newtheorem{theorem}{Theorem}
%
\newtheorem{corollary}[theorem]{Corollary}

\theoremstyle{thmstyletwo}%
\newtheorem{example}{Example}%
\newtheorem{remark}{Remark}%

\theoremstyle{thmstylethree}%
\newtheorem{definition}{Definition}%
\newtheorem{OP}{Open Problem}
\newtheorem{assumption}{Assumption}

\raggedbottom
\usepackage{subfigure}
\usepackage{graphicx}
\usepackage{epstopdf}

\usepackage{ulem}

\usepackage{array,contour}

\newcommand{\g}{\boldsymbol{g}}
\newcommand{\dd}{\boldsymbol{d}}
\newcommand{\rr}{\boldsymbol{r}}
\newcommand{\q}{\boldsymbol{q}}
\newcommand{\PP}{\boldsymbol{P}}

\allowdisplaybreaks[4]

\begin{document}

\title[Derivative-Free Optimization with Transformed Objectives]{Derivative-Free Optimization with Transformed Objective Functions (DFOTO) and the Algorithm Based on the Least Frobenius Norm Updating Quadratic Model}


\author*[1,2]{\fnm{Peng-Cheng} \sur{Xie}}\email{xpc@lsec.cc.ac.cn}

\author[1,2]{\fnm{Ya-Xiang} \sur{Yuan}}\email{yyx@lsec.cc.ac.cn}

\affil*[1]{\orgdiv{State Key Laboratory of Scientific/Engineering Computing, Institute of Computational Mathematics and Scientific/Engineering Computing}, \orgname{Academy of Mathematics and Systems Science, Chinese Academy of Sciences}, \orgaddress{\street{Zhong Guan Cun Donglu 55}, 
\postcode{100190}, \state{Beijing}, \country{China}}}

\affil*[2]{
\orgname{University of Chinese Academy of Sciences}, \orgaddress{\street{Zhong Guan Cun Donglu 55}, 
\postcode{100190}, \state{Beijing}, \country{China}}}


\abstract{Derivative-free optimization (DFO) problems are optimization problems where the derivative information is unavailable. The least Frobenius norm updating quadratic interpolation model function is one of the essential under-determined model functions for model-based derivative-free trust-region methods. This article proposes derivative-free optimization with transformed objective functions (DFOTO) and gives a model-based trust-region method with the least Frobenius norm model. The model updating formula is based on Powell's formula and can be easily implemented. The method shares the same framework with those for problems without transformations, and its query scheme is given. We propose the definitions related to optimality-preserving transformations to understand the interpolation model in our method when minimizing transformed objective functions. We prove the existence of model optimality-preserving transformations beyond translation transformations. The necessary and sufficient condition for such transformations is given. An interesting discovery is that, as a fundamental transformation, the affine transformation with a (non-trivial) positive multiplication coefficient is not model optimality-preserving. We also analyze the corresponding least Frobenius norm updating model and its interpolation error when the objective function is affinely transformed. The convergence property of a provable algorithmic framework containing the least Frobenius norm updating quadratic model for minimizing transformed objective functions is given. Numerical results show that our method can successfully solve most test problems with objective optimality-preserving transformations, even though some of such transformations will change the optimality of the model function. To our best knowledge, this is the first work providing the model-based derivative-free algorithm and analysis for transformed problems with the function evaluation oracle. This article also proposes the ``moving-target'' optimization problem as an open problem.}

\keywords{derivative-free optimization,  transformation, quadratic model, trust-region} 

\pacs[MSC Classification]{65K05, 90C30, 90C56, 90C90}

\maketitle

\section{Introduction}

Most optimization methods for unconstrained optimization need to use the derivative of the objective function. However, in practice, it is frequent that the objective function is costly to compute, and its derivatives are not available. A typical example is that the objective function is not expressed by an analytic function but obtained through a ``black box'', such as a chemical process or computer simulation. Optimization of this type is called derivative-free optimization. Derivative-free optimization methods are numerical methods in which no first-order or higher-order derivatives are required. For more details on derivative-free optimization, one can see the book of Conn, Scheinberg and Vicente \cite{conn2009introduction}, and Audet and Hare's book \cite{audet2017derivative}. 

\subsection{Derivative-free optimization}

Derivative-free optimization problems appear very often in practice. Applications of derivative-free optimization are widely seen in engineering fields. Some examples are tuning of algorithmic parameters \cite{Audet2006}, optimization of neural networks \cite{2019AlyUEMCON}, automatic error analysis \cite{Higham2002}, dynamic pricing \cite{levina2009dynamic} and optimal design in engineering design \cite{LiandXie}. 

Derivative-free methods can be of different types: for example, direct-search methods, line-search methods, model-based methods, heuristic algorithms, and so on.
\begin{sloppypar}
Direct-search methods include pattern search methods, simplex methods, directional direct-search methods, mesh adaptive direct-search methods, and so on. For example, there are Hooke-Jeeves method \cite{hooke1961direct},  Nelder-Mead method \cite{Nelder1965}, modified simplex method \cite{Tseng1999}, generating set search method \cite{Kolda2003}, and the  nonlinear optimization with the MADS algorithm (NOMAD) \cite{nomad}. Another type of derivative-free methods is the line-search method without derivatives. According to different choices of the searching directions, there are basically three kinds of line-search methods: alternating directions method (e.g., Rosenbrock method \cite{rosenbrock1960automatic}, and the modified ones \cite{swann1972direct}), conjugate directions method \cite{smith1962automatic,powell1964efficient}, and methods based on the approximate gradient (e.g., finite difference quasi-Newton methods 
\cite{stewart1967modification,gill1972quasi,Gill1983}). Methods based on the random gradient approximation have also been discussed \cite{Nesterov2017,duchi2015optimal,Scheinberg2022,zhigljavsky2012theory,berahas2022theoretical}. 
\end{sloppypar}

Model-based methods are important types of derivative-free methods. To be specific, quadratic interpolation model methods, under-determined quadratic interpolation model methods, and regression model methods all use the polynomial models \cite{Winfield1969,Powell2003,Leastf,powell2006newuoa,Conn2008b,Conn2009a,xie2023h2,xie2dmosb,xiesufficient}. Another model-based derivative-free method is the radial basis function interpolation model method \cite{Mattias2000,ORBIT}. {There are also combination methods based on models, such as the method combining the trust-region method and line-search method \cite{xieyuancombination}.} {There are also discussions on the trust-region methods based on probabilistic models \cite{bandeira2014convergence,gratton2018complexity}.}

In addition, most heuristic algorithms do not use derivative information as well, and simulated annealing algorithms \cite{SimulatedAnnealing01} and genetic algorithms \cite{audet2017derivative} are in this type. There are still some other derivative-free methods, such as Bayesian optimization methods \cite{Bayes01}, methods for stochastic optimization \cite{gratton2015direct, bandeira2014convergence,ghadimi2013stochastic} and global optimization \cite{larson2016batch}. Derivative-free methods for special problems also exist, of which an example is the least-squares minimization \cite{zhang2010derivative, cartis2019derivative}, {and another example is the optimization with  special constraints such as the ellipsoid constraint \cite{Xie_2023_ellipsoid}}. Some surveys, such as the paper of  Larson, Menickelly and Wild \cite{larson2019derivative}, Zhang's survey \cite{zhang2021} and Rios and Sahinidis's work \cite{rios2013derivative}, review multiple types of derivative-free optimization methods.

Besides, quite a few excellent software implementations exist for DFO problems. Examples of them are CMA-ES \cite{CMA-ES}, DFO \cite{new-2}, IMFIL \cite{new-4} and so on. The late professor M. J. D. Powell proposed TOLMIN \cite{TOLMIN}, COBYLA \cite{COBYlA}, UOBYQA \cite{UOBYQA}, NEWUOA \cite{powell2006newuoa}, BOBYQA \cite{Powell2009}, and LINCOA \cite{powell2015fast}. There are also DFO-LS \cite{cartis2019improving} and DFBGN \cite{cartis2023scalable}. Recently, Ragonneau and Zhang designed the PDFO, a cross-platform interface for Powell's derivative-free optimization solvers \cite{Ragonneau_Zhang_2021}, and COBYQA \cite{Ragonneau_2022}. We implemented the MATLAB versions of NEWUOA \cite{Xie_2023_NEWUOA_Matlab} and BOBYQA \cite{Xie_2023_BOBYQA_Matlab}, and their Python versions. Zhang's PRIMA \cite{Zhang_2023_PRIMA} provides the reference implementation for Powell's methods with modernization and amelioration. 

\subsection[DFOTO]{Derivative-free optimization with transformed objective functions (DFOTO)}
\label{section1.2} 
This article will focus on solving derivative-free optimization problems with transformed objective functions. The unconstrained derivative-free optimization problems with transformed objective functions\footnote{Constrained problems will be considered in the future. Our results can be extended to some constrained problems in a straightforward way. DFOTO can also be extended to the general transformed optimization using derivative information.} proposed in this article have the general form  
\begin{equation}
\label{transformproblem}
\min_{\boldsymbol{x} \in {\mathbb{R}}^n} \ f(\boldsymbol{x}),
\end{equation}
in which a black box can provide the function values at $m$ points at one iteration/query step, and the query oracle is given in Assumption \ref{Qassumption}. In addition, the $m$ queried points at the same iteration/query step share the same transformation of the objective function when updating. It transforms $f$ to $f_k:=T_k\circ f$, and the transformation $T_k$ only depends on the current ($k$-th) step, with the definition as Definition \ref{def-transformation}. Notice that what we want to minimize finally is still the original objective function $f$. 
\begin{assumption}[Query oracle of DFOTO]
\label{Qassumption}
One query or evaluation obtains the output function values at $m$ points, and this batch of queried points can be chosen by the optimization algorithms. Fig. \ref{fig1_query} illustrates the query oracle.

\begin{figure}[hbtp]
\centering
\includegraphics[width=1\linewidth]{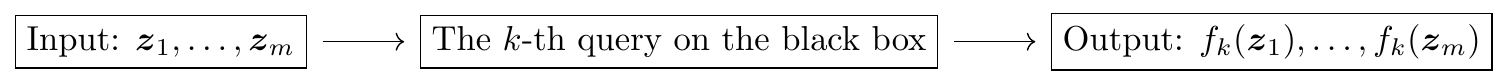}
\caption{Query oracle of DFOTO: the $k$-th query, for the function values of $\boldsymbol{z}_1,\cdots,\boldsymbol{z}_m$\label{fig1_query}}
\end{figure}

\end{assumption}

The query oracle in Assumption \ref{Qassumption} has two essential features. One is that a set of points is queried simultaneously, and the other one is that the simultaneously queried points share the same transformation. Notice that the query in derivative-free optimization usually can not be done in a very short time or low cost, and hence we also call it expensive optimization. Such query oracle usually corresponds to the batch interactive evaluating/simulation mechanism. In addition, we will give more application examples {before the end of Section \ref{section1.2}}. It should be pointed out that, to our best knowledge, although DFOTO has broad applications, the concept and related studies have not been explored deeply or  specifically. Limited algorithms, especially the model-based ones, have been designed or analyzed for such problems. 
This article aims to propose the problem with transformation and to give some preliminary results when using model-based algorithms with a kind of under-determined quadratic model to solve it. We also answer how the least Frobenius norm updating model and the corresponding algorithm are affected by the transformed function value.

\begin{definition}
\label{def-transformation}
Let $T$ be the transformation from ${\mathbb{R}}$ to ${\mathbb{R}}$, and we denote the function $T\circ f$ as the transformed function, which satisfies \((T \circ f)(\boldsymbol{x})=T(f(\boldsymbol{x}))\) for a given function $f$ and arbitrary $\boldsymbol{x}\in {\mathbb{R}}^n$.
\end{definition} 

Every transformation discussed in the following is a transformation from ${\mathbb{R}}$ to ${\mathbb{R}}$. The transformation has no more other requests generally. Transformed objective functions play important roles in stochastic, noisy, or private derivative-free/black-box optimization\footnote{The general form of the unconstrained private black-box optimization problem can be formulated as 
\(
\min_{\boldsymbol{x} \in {\mathbb{R}}^n}\ f(\boldsymbol{x}),
\) 
where $f$ is a private black-box function. The evaluation of $f$ is expensive, and its output value is encrypted to $f_k$ by adding noises, and the true function value of $f$ at each corresponding point is private.}. 
For example, in private black-box optimization, different transformations can be regarded as different encryptions formed by adding different noises according to the theory of differential privacy \cite{privacy13,privacy18,Dwork2006,privacy37,Kasiviswanathan2008,privacy35}. Section \ref{TWT} will present details about solving a special kind of private engineering optimal design problem, which is exactly an example. Another example is the cloud-based distributed optimization problem, which aims to minimize the local and cloud-based composite objective functions while protecting the privacy of the corresponding objective functions \cite{inproceedingsDP}. Besides, there are also private black-box optimization problems in personal health fields \cite{liu2015secure}. Moreover, some work about the differentially private Bayesian optimization has been discussed by Kusner et al. \cite{diffpri}. 

There are multiple applications of derivative-free optimization with transformed objective functions, among which the private black-box optimization serves as a mere instance. For example, the problems with the regularization function, where the coefficients of the regularization term change with the iteration, belong to derivative-free optimization problems with transformed objective functions. A derivative-free trust-region algorithm for composite nonsmooth optimization with a regularization function has been proposed by Grapiglia, Yuan and Yuan \cite{Grapiglia2016}. For the discussion on minimizing transformed objective functions, one may recall the work of Deng and Ferris \cite{Deng06}, which provides a discussion on the algorithm UOBYQA \cite{UOBYQA} for minimizing stochastic objective functions.

In addition to illustrating how the interpolation model change when there are transformations or perturbations in the objective function at each iteration (e.g., the original objective function at a different iteration is transformed by a different affine transformation), we also focus on whether the minimizer of the trust-region subproblem of the quadratic model will change. 

\begin{remark}
\begin{sloppypar}
Some derivative-free methods are function value free, e.g., the covariance matrix adaptation evolution strategy (CMA-ES) \cite{CMA-ES}, Nelder-Mead method \cite{Nelder1965} (only using the comparison evaluation), and the algorithm using only Boolean-valued function comparisons \cite{jamieson2012query}. This article focuses on modifying the model-based algorithm relying on the function values to solve the problems with transformed objective functions and its property. We will provide a new point of view to understand the influence of the transformation on the objective functions when applying function value dependent derivative-free optimization methods.
\end{sloppypar}
\end{remark}

\subsection{Contributions and organization}

Considering solving the proposed DFOTO problems with derivative-free trust-region algorithms based on the least Frobenius norm updating quadratic model, this article contains the following contributions. 
\begin{enumerate}
\item[(a)] We modify Powell's updating formula \cite{Leastf}  of the least Frobenius norm updating model for model-based trust-region methods (by modifying an $n$-dimensional vector) with the proposed query scheme. 

\item[(b)] We analyze the least Frobenius norm updating interpolation model when minimizing transformed objective functions. We propose optimality-preserving transformations. We prove the necessary and sufficient condition for such transformations.  

\item[(c)] We discuss the positive monotonic transformations. We obtain the analytic expression of the least Frobenius norm updating model of an affinely transformed objective function. We analyze its interpolation error and give a further discussion on it.   

\item[(d)] We give a primary convergence analysis for first-order critical points. 

\item[(e)] {We present numerical results. The numerical results indicate the necessity of using the modified model updating formula. The numerical results support that our method can efficiently and robustly solve most test problems and a presented real-world problem with the transformations, even though such transformations will change the optimality of the model function.}
\end{enumerate} 

To our best knowledge, this is the first work minimizing transformed objective functions with model-based algorithms that need to use the function value information (not the comparison of the function values). The corresponding methods perform well when solving order-preserving or optimality-preserving transformed problems.

\begin{sloppypar}
{\bf Organization:} This article is organized as follows. We give our algorithm and query scheme in Section \ref{Algorithm, Query Scheme and Optimality-preserving Transformation}, which contains the least Frobenius norm updating quadratic models, trust-region subproblem, and definitions related to optimality-preserving transformations. The existence of the model optimality-preserving transformations, beyond translation transformation, is proved. The necessary and sufficient condition for such transformations is also given in this section. In Section \ref{Positive Monotonic Transformations and Affine Transformations}, we present the property of positive monotonic transformations, especially the affine transformation. We discover that the important affine  transformation with a (non-trivial) positive multiplication coefficient is not model optimality-preserving. The corresponding transformation on the least Frobenius norm updating model functions is provided, when the objective function is affinely  transformed. Section \ref{Convergence} presents the coefficients of the fully linear model's interpolation error when there is an affine transformation. Section \ref{Convergence} also analyzes the convergence of the solver for minimizing corresponding transformed objective functions under the guarantee of a provable model-based derivative-free framework. Section \ref{Numerical Results} presents numerical results of minimizing a test example, and the performance profile and sensitivity profile of solving a set of test problems with randomly affinely transformed objective functions. Such numerical results support our theoretical analysis, and the results of solving the private engineering optimal design problem of the space traveling wave tube also show the practical advantage of our method. At the end of the article, we propose the ``moving-target'' DFO problem and give conclusions of our work. 
\end{sloppypar}

\section[Algorithm, query scheme \& transformation]{Algorithm, query scheme and optimality-preserving transformation}
\label{Algorithm, Query Scheme and Optimality-preserving Transformation}

We present our algorithms and query scheme, and introduce details of the least Frobenius norm updating quadratic model for derivative-free optimization. Besides, we will introduce the trust-region subproblem in model-based derivative-free algorithms. Some basic concepts will be given, which include the optimality-preserving transformation.

\subsection{Model-based trust-region algorithms and query scheme}

We give some details of model-based trust-region algorithms and the query scheme, which will be used to solve problem (\ref{transformproblem}). We will also explain why we choose such a framework, interpolation model functions and the query scheme.

A basic framework of model-based derivative-free trust-region algorithms (for the transformed objective functions) is shown in Algorithm \ref{algo-TR}, where some details are omitted from this pseudocode for simplicity. A provable algorithmic framework of model-based derivative-free trust-region algorithms for original objective functions can be found in the book of Conn, Scheinberg and Vicente \cite{conn2009introduction}. {As shown in Assumption \ref{Qassumption} and Table \ref{table1}, the query in Algorithm \ref{algo-TR} is done for a batch of points that share the uniform transformation. The functions \(f_1, \cdots , f_k, \cdots\) separately denote the transformed objective functions corresponding to transformations \(T_1, \cdots , T_k, \cdots\) at the steps \(1, \cdots , k, \cdots\). When solving the trust-region subproblem, the center of the trust region is usually set as $\boldsymbol{x}_{\text{opt}}$, which is the interpolation point with the minimum function value among the interpolation points at the current iteration. We use $\boldsymbol{x}^{(k)}$ in the algorithmic framework for simplicity, and it will be written as $\boldsymbol{x}_{\text{opt}}$ in (\ref{trust_sub_trial_step}). When updating the \(k\)-th interpolation set, we typically drop the worst point $\boldsymbol{x}_t$ at this step and replace it by $\boldsymbol{x}_{\text{new}}(=\boldsymbol{x}^{(k-1)}+\dd_{k-1})$, which is the newly added interpolation point. The poisedness is used to measure how well the distribution of the interpolation points of the set is, in the sense of giving good quadratic interpolation models. Algorithm \ref{algo-TR} uses the traditional common \(\Lambda\)-poisedness used by the model-based trust-region derivative-free methods, and the check follows the same step in the traditional algorithm framework of the model-based trust-region derivative-free methods \cite{conn2009introduction}.}

\begin{algorithm}
        \caption{Framework of model-based derivative-free trust-region algorithms for minimizing transformed objective functions\label{algo-TR}} 
      \begin{algorithmic}[1]
        \State Given an initial point $\boldsymbol{x}^{(1)}$ and an initial set of interpolation points $\mathcal{X}_1$ satisfying $\boldsymbol{x}^{(1)} \in \mathcal{X}_1$, and
\[
f_1(\boldsymbol{x}^{(1)})=\min_{\boldsymbol{y} \in \mathcal{X}_1}\ f_1(\boldsymbol{y}).
\]
Choose the initial trust-region radius $\Delta_1$. $k:=1$.
\State \label{step2ofalg} Construct a quadratic model $Q_k$ that satisfies the interpolation condition 
\(
Q_k(\boldsymbol{x})=f_k(\boldsymbol{x}),\ \boldsymbol{x} \in \mathcal{X}_k.
\) 
\State 
Solve the trust-region subproblem 
\[
\begin{aligned}
\min_{\dd \in {\mathbb{R}}^n} \ &Q_k (\boldsymbol{x}^{(k)}+\dd) \\ 
\text{subject to } &\|\dd\|_2 \leqslant \Delta_k,
\end{aligned}
\]
and obtain the trial step $\dd_k$.
\State 
If $\boldsymbol{x}^{(k)}+\dd_k$ is accepted, e.g., $f_{k+1}(\boldsymbol{x}^{(k)}+\dd_k)<f_{k+1}(\boldsymbol{x}^{(k)})$, then $\boldsymbol{x}^{(k+1)}:=\boldsymbol{x}^{(k)}+\dd_k$; otherwise $\boldsymbol{x}^{(k+1)}:=\boldsymbol{x}^{(k)}$.
\State Check if the interpolation set is well-poised. If necessary, perform a model-improvement step to improve the poisedness of the interpolation set. Update the interpolation set to $\mathcal{X}_{k+1}$ so that $\mathcal{X}_{k+1}$ contains $\boldsymbol{x}^{(k+1)}$.
\State Update the trust-region radius based on the performance of $\dd_k$ and the poisedness of the interpolation set to obtain $\Delta_{k+1}$. $k:=k+1$. Go to Step \ref{step2ofalg}.
\end{algorithmic}
\end{algorithm}

Similar to trust-region methods using derivatives of the objective function, a derivative-free one constructs a model function of the objective function at each iteration and then minimizes the model within a trust region to obtain a new trial point. Since the derivative information can not be used, the model here is constructed by quadratic interpolation. Section \ref{Least Frobenius norm updating quadratic models for transformed objective functions} will describe how to obtain such a model. More details of derivative-free trust-region methods can be seen in the survey of Larson, Menickelly and Wild \cite{larson2019derivative}, the book of Conn, Scheinberg and Vicente \cite{conn2009introduction}, and the book of Audet and Hare \cite{audet2017derivative}.

We present the query and evaluation process for solving problems with transformed objective functions here, which will be used in the rest discussion. We synchronize the query of the first $m$ interpolation points. Once a new iteration point is obtained, the interpolation set\footnote{We call the set of interpolation points ``the interpolation set''.} is updated at each step according to the process shown in Table \ref{table1}, and we obtain the function evaluations at the points in the new interpolation set. Notice that each query set contains \(m\) points. The query set can be updated in different ways, e.g., $\mathcal{X}_k:=\mathcal{X}_{k-1}\backslash\{\arg\underset{\boldsymbol{y}\in\mathcal{X}_{k-1}}{\max}\ \Vert \boldsymbol{y}-\boldsymbol{x}^{(k)}\Vert_2\}\cup\{\boldsymbol{x}^{(k)}\}$ or $\mathcal{X}_k:=\mathcal{X}_{k-1}\backslash\{\arg\underset{\boldsymbol{y}\in\mathcal{X}_{k-1}}{\max}\ f(\boldsymbol{y})\}\cup\{\boldsymbol{x}^{(k)}\}$. Notice that \(\boldsymbol{x}^{(k)}\) here refers to the new obtained point (usually \(\boldsymbol{x}^{(k-1)}+\boldsymbol{d}_{k-1}\)) before the acceptance. In this way, it is ensured that the transformed function values corresponding to each interpolation set are with the uniform transformation.

\begin{table}[htbp] 
  \centering   
  \caption{Query and evaluation in algorithms  for solving problems with transformed objective functions\label{table1}}  
  \begin{tabular}{ccc}  
\toprule
    Step  & Query set  & {Function evaluation set} 
\\
 \midrule 
    $1$ & $\mathcal{X}_1$  & $\{f_1(\boldsymbol{x}),\ \forall \  \boldsymbol{x}\in\mathcal{X}_1\}$\\
    $2$ & $\mathcal{X}_2$  & $\{f_2(\boldsymbol{x}),\ \forall \   \boldsymbol{x}\in\mathcal{X}_2\}$\\
    \vdots &\vdots&\vdots\\
    $k$ & $\mathcal{X}_k$  & $\{f_{k}(\boldsymbol{x}),\ \forall  \  \boldsymbol{x}\in\mathcal{X}_{k}\}$\\
      \vdots &\vdots&\vdots\\ 
\botrule
  \end{tabular}
\end{table}

Notice that the interpolation model function used in Algorithm \ref{algo-TR} for each interpolation point is exactly the least Frobenius norm updating quadratic model. In the following, we give the reasons for choosing the query scheme shown in Table \ref{table1} when using Algorithm \ref{algo-TR}. First, one can find in Section \ref{Least Frobenius norm updating quadratic models for transformed objective functions} that when we use Algorithm \ref{algo-TR} based on the least Frobenius norm updating quadratic model to solve DFOTO problems, we only need to change the vector on the right-hand side of the interpolation equation (details are in (\ref{form_vec_r})), and it is simple to handle the transformation. This makes our method simple and robust in the implementation. Second, although the transformation of the output of function values will change at different iteration steps, it is reasonable to trust the points that have been evaluated in the previous iteration and rely on them to find the next iteration points with the new function value information. Otherwise, our method would suffer from iterative discontinuity if the algorithm completely re-searches all the detection points. In fact, our theoretical analysis and numerical results will support the above exposition.

There are mainly three reasons why we consider using the under-determined quadratic interpolation function\footnote{The number of interpolation points is smaller than the number of elements in the polynomial basis.} (more specifically, the least Frobenius norm updating quadratic model) in the trust-region framework. First, the quadratic model function can locally capture the curvature information of the objective function. Second, using fewer interpolation points can reduce the number of function evaluations. Last but not least, the sample/interpolation points would have to be reasonably close to the current iteration. However, a fully quadratic model needs $\mathcal{O}(n^2)$ points, and there will typically occur situations in which the number of useful sample points is lower or much lower than the number of elements in the polynomial basis.

\subsection[The least Frobenius norm updating quadratic models]{The least Frobenius norm updating quadratic models for transformed objective functions}
\label{Least Frobenius norm updating quadratic models for transformed objective functions}

In the model-based derivative-free trust-region method, $Q_k$ denotes the $k$-th quadratic model function, and $\mathcal{X}_k$ denotes the interpolation set at the $k$-th iteration. The number of interpolation points, denoted by $m$, satisfies that $n+2 \leqslant m \leqslant \frac{1}{2}(n+1)(n+2)$, which is flexible in the interpolation. Notice that the coefficients of the quadratic\footnote{The degree is not larger than 2.} model function $Q_k$ are the symmetric Hessian matrix $\nabla^2Q_k$, the gradient vector $\nabla Q_k$ and the constant term, whose freedom is $\frac{1}{2}(n+1)(n+2)$ together, which is $\mathcal{O}(n^2)$. When the problem's dimension $n$ is large, if we directly determine the coefficients of the model function $Q_k$ by solving interpolation equations $Q_k(\boldsymbol{x}_i)=f(\boldsymbol{x}_i), \ i=1,\cdots,\frac{1}{2}(n+1)(n+2)$, the number of function evaluations is large. To reduce the number of calling functions, Powell \cite{powell2006newuoa} advised that we can use fewer interpolation points to obtain the quadratic model function. To uniquely determine the coefficients of $Q_k$, the iterative quadratic model $Q_k$ is designed to be the solution of
\[
\begin{aligned}
\underset{Q}{\operatorname{\min}}\ &\left\Vert\nabla^{2} Q-\nabla^{2} Q_{k-1}\right\Vert_{F}^{2} \\  
\text{subject to} \ &Q(\boldsymbol{x})=f(\boldsymbol{x}), \ \forall \ \boldsymbol{x} \in \mathcal{X}_{k},
\end{aligned}
\]
where $\Vert\cdot\Vert_F$ denotes the Frobenius norm, i.e., \(
\Vert \boldsymbol{G}\Vert_F^2=\sum_{i=1}^{m}\sum_{j=1}^{n}\vert \boldsymbol{G}_{ij}\vert^2,
\) for given $\boldsymbol{G}\in{\mathbb{R}}^{m\times n}$.      
In other words, the quadratic model function $Q_k$ satisfies that $\nabla^{2} Q_k-\nabla^{2} Q_{k-1}$ has the least Frobenius norm over the quadratic functions satisfying the interpolation conditions $Q(\boldsymbol{x}_i)=f(\boldsymbol{x}_i), \ i=1, \cdots, m${, where $\boldsymbol{x}_1,\cdots,\boldsymbol{x}_m$ denote the current interpolation points and \(n+2 \leqslant m<\frac{1}{2}(n+1)(n+2)\).}   
\begin{remark}
For the simplicity and clearness of the discussion, we denote the interpolation points at the step being discussed as $\boldsymbol{x}_1,\cdots,\boldsymbol{x}_m$. The lower bound of \(m\) is set to be \(n+1\) for including the discussion about the case where \(\nabla^2 Q_{k-1}\) is not a zero matrix.
\end{remark}

According to the convexity of the Frobenius norm, it is proved that the quadratic model $Q_k$ can be obtained uniquely at the $k$-th iteration. Thus the rest of the coefficients' freedom is taken up. An advantage of such the model is that it has the projection property {for quadratic functions} \cite{Leastf}, and then
 \[
\Vert \nabla^2 Q_k - \nabla^2 f \Vert_F \leqslant \Vert \nabla^2 Q_{k-1} - \nabla^2 f \Vert_F  
 \]
holds for any {quadratic function} \(f\).  
 
Notice that the corresponding objective function changes with the iteration, which is exactly an important characteristic of the step-transformed problems. Notice that the term step-transformed means that the objective functions' transformations depend on the iteration/query step. Thus we come up with the idea to analyze the least Frobenius norm updating quadratic model function to adapt to the derivative-free optimization with transformed objective functions. The new updating formula is supposed to hold the ability to work when the transformed objective function $f_k$ changes with the iteration number $k$. We should notice that the model analyzed in this article is the least Frobenius norm updating quadratic model, not the least Frobenius norm one \cite{conn1997convergence,conn1997recent,conn1998derivative}, and the latter one will have different and simpler results than that in Section \ref{Positive Monotonic Transformations and Affine Transformations}. Efficient and robust numerical performance with the projection property and the yearning for more theoretical analysis of the least norm updating type model motivate us to find more details of such a model, especially with transformations existing.

For simplicity, suppose that the poised interpolation set at the $k$-th iteration is $\mathcal{X}_k=\{\boldsymbol{x}_1,\cdots,\boldsymbol{x}_m\}$ in the following. The quadratic model $Q_k$ of the transformed function $f_k$ is obtained by solving
\begin{equation}\label{leastfrob-2}
\begin{aligned}
\underset{Q}{\operatorname{\min}}\ &\left\Vert \nabla^{2} Q-\nabla^{2} Q_{k-1}\right\Vert_{F}^{2} \\  
\text{subject to}  \ &Q(\boldsymbol{x})=f_k(\boldsymbol{x}), \ \forall \ \boldsymbol{x} \in \mathcal{X}_{k}.
\end{aligned}
\end{equation}
We define $D_k(\boldsymbol{x})=Q_{k}(\boldsymbol{x})-Q_{k-1}(\boldsymbol{x})$. Then, from (\ref{leastfrob-2}), $D_k(\boldsymbol{x})$ can be obtained by solving
\begin{equation}
\label{miniFrob}
\begin{aligned}
\underset{D}{\operatorname{\min}} \  &\left\Vert \nabla^2 D\right\Vert_F^2 \\ 
\text{subject to} \
&\left\{
\begin{aligned} 
&
D(\boldsymbol{x}_{i})=f_{k}(\boldsymbol{x}_i)-f_{k-1}(\boldsymbol{x}_i), \ i=1,\cdots, t-1, t+1, \cdots, m,\\
&D(\boldsymbol{x}_{\text{new}})=f_{k}(\boldsymbol{x}_{\text{new}})-Q_{k-1}(\boldsymbol{x}_{\text{new}}),
\end{aligned}
\right.
\end{aligned}
\end{equation}
since the old $\boldsymbol{x}_t$ is dropped and replaced by $\boldsymbol{x}_{\text{new}}$ at the current ($k$-th) iteration, according to the framework of the model-based derivative-free trust-region algorithms \cite{conn2009introduction}. {The derivation from (\ref{leastfrob-2}) to (\ref{miniFrob}) is straightforward, and the equivalence holds after replacing the function \(Q(\boldsymbol{x})-Q_{k-1}(\boldsymbol{x})\) by \(D(\boldsymbol{x})\).}

Let $\lambda_j, j=1,2,\cdots,m$, be the Lagrange multipliers for the KKT conditions of optimization problem (\ref{miniFrob}), which, as Powell \cite{Leastf} pointed out, have the properties that
\begin{equation} 
\label{new-*}
\left\{
\begin{aligned}
&\sum_{j=1}^{m} \lambda_{j}=0, \  \sum_{j=1}^{m} \lambda_{j}\left(\boldsymbol{x}_{j}-\boldsymbol{x}_{0}\right)=\boldsymbol{0},\\
&\nabla^{2} D_k=\sum_{j=1}^{m} \lambda_{j}\left(\boldsymbol{x}_{j}-\boldsymbol{x}_{0}\right)\left(\boldsymbol{x}_{j}-\boldsymbol{x}_{0}\right)^{\top},
\end{aligned}
\right.
\end{equation}
where $\boldsymbol{0}\in{\mathbb{R}}^{m}$, $\boldsymbol{x}_0$ is the base point to reduce the computation errors, and it is initially chosen as the input start point. 
Therefore, the quadratic function $D_k(\boldsymbol{x})$ can be written in the form 
\begin{equation} 
\label{D_k_form}
D_k(\boldsymbol{x})=c+(\boldsymbol{x}-\boldsymbol{x}_{0})^{\top} \g+\frac{1}{2} \sum_{j=1}^{m} \lambda_{j}\left((\boldsymbol{x}-\boldsymbol{x}_{0})^{\top}(\boldsymbol{x}_{j}-\boldsymbol{x}_{0})\right)^{2},\ \boldsymbol{x}\in{\mathbb{R}}^{n}. 
\end{equation} 
After determining the parameters ${\boldsymbol{\lambda}}^{\top}=(\lambda_1,\cdots,\lambda_m)^{\top} \in {\mathbb{R}}^m$, $c \in {\mathbb{R}}$ and $\g \in {\mathbb{R}}^{n}$, we can determine the unique function $D_k(\boldsymbol{x})$ and thus obtain the new quadratic model function $Q_{k}(\boldsymbol{x})$. It is easy to see that the coefficients of $D(\boldsymbol{x})$ are the solution of the system of linear equations
\[
\left(\begin{array}{cc}
\boldsymbol{A} & \boldsymbol{X}^{\top} \\
\boldsymbol{X} & \boldsymbol{0}
\end{array}\right)
\left(
{\boldsymbol{\lambda}}^{\top},
c,
\g^{\top}
\right)^{\top}=\left(
\rr^{\top}, 
0,\cdots,0
\right)^{\top},
\]
where the matrix $\boldsymbol{0}\in{\mathbb{R}}^{(n+1)\times(n+1)}$. The elements of the matrix $\boldsymbol{A}\in{\mathbb{R}}^{m\times m}$ and $\boldsymbol{X}\in{\mathbb{R}}^{(n+1)\times m}$ are
\[
\boldsymbol{A}_{i j} =\frac{1}{2}\left(\left(\boldsymbol{x}_{i}-\boldsymbol{x}_{0}\right)^{\top}\left(\boldsymbol{x}_{j}-\boldsymbol{x}_{0}\right)\right)^{2}
\] 
and
\[  
\boldsymbol{X} =\left(\begin{array}{ccc}
1 & \cdots & 1 \\
\boldsymbol{x}_{1}-\boldsymbol{x}_{0} & \cdots  & \boldsymbol{x}_{m}-\boldsymbol{x}_{0}
\end{array}\right),
\]
where $1\leqslant i, j \leqslant m$. Besides, the vector $\rr\in{\mathbb{R}}^{m}$ has the form as
\begin{equation}
\label{form_vec_r}
\begin{aligned}
\rr=&\left(
f_{k}(\boldsymbol{x}_1)-f_{k-1}(\boldsymbol{x}_1),
\cdots,
f_{k}(\boldsymbol{x}_{t-1})-f_{k-1}(\boldsymbol{x}_{t-1}),
f_{k}(\boldsymbol{x}_{\text{new}})-Q_{k-1}(\boldsymbol{x}_{\text{new}}),\right.\\
&\left.f_{k}(\boldsymbol{x}_{t+1})-f_{k-1}(\boldsymbol{x}_{t+1}),
\cdots,
f_{k}(\boldsymbol{x}_m)-f_{k-1}(\boldsymbol{x}_m)
\right)^{\top}.
\end{aligned}
\end{equation}

\begin{remark}
The vector $\rr$ is the only difference between the updating formula for minimizing objective functions with transformations and the one for minimizing objective functions without transformations. This form is natural but of major importance for obtaining the least Frobenius norm updating quadratic models of transformed objective functions.
\end{remark}

We denote $\boldsymbol{W}$ as the KKT matrix
\begin{equation*} 
\boldsymbol{W}=\left(\begin{array}{cc}
\boldsymbol{A} & \boldsymbol{X}^{\top} \\
\boldsymbol{X} & \boldsymbol{0}
\end{array}\right). 
\end{equation*}  
Thus, if \(\boldsymbol{W}\) is invertible, \({\boldsymbol{\lambda}},c\) and \(\g\) can be obtained by 
\begin{equation}
\label{lambda-c-g-obtain}
\left(
{\boldsymbol{\lambda}}^{\top},
c,
\g^{\top}
\right)^{\top}
=\boldsymbol{H}
\left(
\rr,
0,
\cdots,
0
\right)^{\top},
\end{equation}
where \(\boldsymbol{H}=\boldsymbol{W}^{-1}\). {The invertibility of \(\boldsymbol{W}\) depends on the interpolation points' positions and refers to the set's poisedness, and it has been deeply discussed by Powell \cite{Powell04onupdating}. The initial interpolation points guarantee the invertibility of the initial \(\boldsymbol{W}\), and the invertibility of the iterative \(\boldsymbol{W}\) and the numerical exactness of the formula (\ref{lambda-c-g-obtain}) will be iteratively guaranteed by selecting suitable interpolation points in the model-improvement steps. This part is the same as the discussion and method in Powell's work \cite{Powell04onupdating,powell2006newuoa}. We assume that the matrix \(\boldsymbol{W}\) is invertible in the following discussion.}  

\begin{remark}
If we obtain the $k$-th model function by solving problem
\begin{equation}
\label{leasthessianandgradient}
\begin{aligned}
\underset{Q}{\operatorname{\min}}\ &\left\Vert \nabla^{2} Q-\nabla^{2} Q_{k-1}\right\Vert_{F}^{2}+\sigma\left\Vert \nabla Q-\nabla Q_{k-1}\right\Vert_{2}^{2} \\ 
\text{subject to}  \ &Q(\boldsymbol{x})=f_k(\boldsymbol{x}), \ \forall \ \boldsymbol{x} \in \mathcal{X}_{k},
\end{aligned}
\end{equation}
where the weight coefficient $\sigma\geqslant 0$, then the following results and analysis still hold, except that $\boldsymbol{W}$ is changed to the matrix
\begin{equation}
\label{Wofleasthessianandgradient}
\boldsymbol{W}=\left(\begin{array}{cc}
\boldsymbol{A} & \boldsymbol{X}^{\top} \\
\boldsymbol{X} & \begin{array}{cc}
0 & \boldsymbol{0}^{\top} \\
\boldsymbol{0} &-\frac{\sigma}{2}\boldsymbol{I}
\end{array}
\end{array}\right),
\end{equation}
where $\boldsymbol{I}\in{\mathbb{R}}^{n\times n}$ is the identity matrix, and $\boldsymbol{0}\in{\mathbb{R}}^{n}$ is the zero vector. In fact, the conclusion of this article also holds for the least norm updating quadratic models in the sense of other norms. 
\end{remark} 

The updating formula of the matrix $\boldsymbol{H}$ given by Powell \cite{Powell04onupdating} can be used. Finally, we obtain $D_k(\boldsymbol{x})$ {as (\ref{D_k_form})} and the parameters ${\boldsymbol{\lambda}}$, $c$ and $\g$ are given by (\ref{lambda-c-g-obtain}), with $\rr$ being given by (\ref{form_vec_r}). Then we can obtain the $k$-th model \(Q_k=Q_{k-1}+D_{k}\).

\subsection{Trust-region subproblem}
\begin{sloppypar}
Model-based derivative-free trust-region algorithms compute a trial step based on solving the trust-region subproblem of the current quadratic model function, i.e., 
\begin{equation}\label{trust_sub_trial_step}
\begin{aligned}
\underset{{\dd \in {\mathbb{R}}^n}}{\operatorname{\min}} \ &Q_k(\boldsymbol{x}_{\text{opt}}+\dd)\\  
\text{subject to}  \ &\Vert \dd \Vert_{2} \leqslant \Delta_{k}.
\end{aligned}
\end{equation} 
{In (\ref{trust_sub_trial_step}), $\boldsymbol{x}_{\text{opt}}$ denotes the interpolation point with the optimal output function value in the $k$-th interpolation set $\mathcal{X}_k$.} 

Details of framework of such type of algorithms can be seen in the book of Conn, Scheinberg and Vicente \cite{conn2009introduction}.

One of the termination conditions of the subroutine for solving the quadratic model subproblem in the trust region in the corresponding model-based derivative-free trust-region algorithms, is
$
\Vert \dd_k \Vert_2 < \rho_{k},
$
where $\dd_k$ is the solution of the trust-region subproblem (\ref{trust_sub_trial_step}) and $\rho_k$ is a constant bound.
\end{sloppypar}

The parameter $\rho_{k}$ is an iterative lower bound on the trust-region radius, and it is designed to maintain enough distance between the interpolation points. 

For the derivative-free optimization problem without transformations, the objective function itself does not change, and only the trust region changes when the iteration increases. However, in DFOTO, $f_k$ and the trust region both change when the iteration increases. The quadratic model $Q_k$ is updated to approximate $f_k$. Then the solution of the subproblem, $\dd_k$, may change according to its definition. As a result, the condition $\Vert \dd_k \Vert_{2} < \rho_{k}$ may not hold at all, and consequently the termination of the algorithm will be influenced.

If the termination condition is not satisfied, the iteration of the algorithm can not leave the loop for solving the trust-region subproblem. The number of iterations may grow up, which refers to a high cost from function evaluations, without achieving the wanted approximate minimum of $f$. Besides, the model-improvement step of the methods can hardly be called, which can not effectively reduce the interpolation error of the model.

\subsection{Optimality-preserving transformations}

We give the analysis and discussion about the optimality-preserving transformations in this part.

\subsubsection{Preliminaries}

\begin{sloppypar}
Given a black-box function $f: {\mathbb{R}}^n\rightarrow {\mathbb{R}}$, we say a quadratic function $Q$ is a quadratic interpolation model of $f$ on the interpolation set $\mathcal{X}\subset {\mathbb{R}}^{n}$, if it satisfies that
\(Q(\boldsymbol{x})=f(\boldsymbol{x}),\ \forall \ \boldsymbol{x}\in \mathcal{X}\). Besides, we say an interpolation set is poised if it is poised in the least/minimum Frobenius norm sense with details in Section 5.3 in the book of Conn, Scheinberg and Vicente \cite{conn2009introduction}, which refers to the nonsingularity/invertibility of the KKT matrix. In the following, a quadratic function (or a linear function) is a function in the space of polynomials of degree less than or equal to 2 (or 1) in \({\mathbb{R}}^n\). Besides, we should note that every transformation discussed in the following is a transformation from ${\mathbb{R}}$ to ${\mathbb{R}}$. To illustrate more details, we give the following definitions.
\end{sloppypar}

\begin{definition}[The least Frobenius norm updating model of the function $h$ on $\mathcal{X}$ based on $Q_\alpha$]
Given the function $h: {\mathbb{R}}^n\rightarrow {\mathbb{R}}$, a quadratic function $Q_\alpha$ and a poised set $\mathcal{X}\subset {\mathbb{R}}^{n}$, where \(n+1\leqslant \vert \mathcal{X} \vert <\frac{1}{2}(n+1)(n+2)\), we say a quadratic model function is the least Frobenius norm updating model of $h$ on $\mathcal{X}$ based on $Q_\alpha$, if it is the solution of 
\begin{equation}
\label{thenewdefmodel}
\begin{aligned}
\underset{Q}{\operatorname{\min}}\ &\left\Vert \nabla^{2} Q-\nabla^{2} Q_{\alpha}\right\Vert_{F}^{2} \\ 
\text{subject to}  \ &Q(\boldsymbol{x})=h(\boldsymbol{x}), \ \forall \  \boldsymbol{x} \in \mathcal{X}. 
\end{aligned}
\end{equation}
{Mathematically, we denote the model above by a mapping \(\mathcal{M}^{\mathcal{X}}_{Q_{\alpha}}\), i.e., the solution of (\ref{thenewdefmodel}) is denoted by \(\mathcal{M}^{\mathcal{X}}_{Q_{\alpha}}(h)\).}   
\end{definition}

Notice that $\vert\cdot\vert$ denotes the cardinality and the solution of (\ref{thenewdefmodel}) is unique as we discussed with the analysis of Powell \cite{Leastf}. 

\begin{definition}[Subproblem of $Q$ with trust-region radius $\Delta$] 
Given a point named $\boldsymbol{x}_{\text{opt}}\in {\mathbb{R}}^{n}$\footnote{The point $\boldsymbol{x}_{\text{opt}}$ is usually set as the interpolation point with optimal function value in the interpolation set.}, a quadratic function \(Q\) and \(\Delta\in\mathbb{R}\), we call problem 
\[ 
\begin{aligned}
\underset{{\dd \in {\mathbb{R}}^n}}{\operatorname{\min}} \ &Q(\boldsymbol{x}_{\text{opt}}+\dd)\\  
\text{subject to}  \ &\Vert \dd \Vert_{2} \leqslant \Delta
\end{aligned}
\]
the subproblem of $Q$ with trust-region radius $\Delta$ centered at $\boldsymbol{x}_{\text{opt}}$. The center will be omitted in the following if there is no need to emphasize.
\end{definition}

We give the following definition of the model optimality-preserving transformation. 

\begin{sloppypar}
\begin{definition}[Model optimality-preserving transformation]
Suppose that the poised set $\mathcal{X}=\{\boldsymbol{x}_1,\cdots,\boldsymbol{x}_m\}\subset {\mathbb{R}}^{n}$, and $Q_{\alpha}$ is a quadratic function. We say a transformation $T$ is a model optimality-preserving transformation with trust-region radius $\Delta$, if the solution of the subproblem of the least Frobenius norm updating model of the function $f$ on $\mathcal{X}$ based on $Q_{\alpha}$ is the same as the solution of the subproblem of the least Frobenius norm updating model of the function $T\circ f$ on $\mathcal{X}$ based on $Q_{\alpha}$. {Mathematically, given the point \(\boldsymbol{x}_{\text{opt}}\in {\mathbb{R}}^{n}\), we say a transformation $T$ is a model optimality-preserving transformation with trust-region radius $\Delta$, if 
\[
\arg\min_{{\Vert \dd \Vert_{2} \leqslant \Delta}} \ Q_{\text{orig}}(\boldsymbol{x}_{\text{opt}}+\dd)
=\arg\min_{{\Vert \dd \Vert_{2} \leqslant \Delta}} \ Q_{\text{trans}}(\boldsymbol{x}_{\text{opt}}+\dd), 
\] 
where 
\(
Q_{\text{orig}}:=\mathcal{M}^{\mathcal{X}}_{Q_{\alpha}}(f),\ Q_{\text{trans}}:=\mathcal{M}^{\mathcal{X}}_{Q_{\alpha}}(T\circ f).
\)}
\end{definition}
\end{sloppypar}

\begin{assumption}
\label{assum-import}
Given the function $f: {\mathbb{R}}^n\rightarrow \mathbb{R}$, \(\boldsymbol{x}_{\text{opt}}\in {\mathbb{R}}^{n}\), a quadratic function \(Q_{\alpha}\) and the poised interpolation set $\mathcal{X}=\{\boldsymbol{x}_1,\cdots,\boldsymbol{x}_m\}\subset {\mathbb{R}}^{n}$, where $n+1 \leqslant \vert \mathcal{X}\vert=m<\frac{1}{2}(n+1)(n+2)$, we assume that $\dd^*\in {\mathbb{R}}^n$ is the solution of the subproblem of the least Frobenius norm updating model of the function $f$ on $\mathcal{X}$ based on the quadratic function $Q_\alpha$ with trust-region radius $\Delta$, where $\Vert \dd^*\Vert_2<\Delta$, and such a model is strictly convex.
\end{assumption}

\subsubsection{Theoretical results}

\begin{sloppypar} 
We give some theoretical results in this part. We obtain the following necessary and sufficient condition for a model optimality-preserving transformation.
\end{sloppypar}

\begin{theorem}
\label{coro-pc-2}
\begin{sloppypar} 
Suppose that Assumption \ref{assum-import} holds. Then the transformation $T$ is a model optimality-preserving transformation if and only if $({T}(f(\boldsymbol{x}_1)),\cdots,{T}(f(\boldsymbol{x}_m)))^{\top}$ is the solution of linear equations 
\begin{equation}\label{1-1}
\begin{aligned}
&\sum_{j=1}^{m} \left(\left(\boldsymbol{x}_{j}-\boldsymbol{x}_{\rm opt}\right)\left(\boldsymbol{x}_{j}-\boldsymbol{x}_{\rm opt}\right)^{\top}\dd^*\right) \bigg(\boldsymbol{H}_j\left({T}(f(\boldsymbol{x}_1)) - {Q}_{\alpha}(\boldsymbol{x}_{1}),\cdots, {T}(f(\boldsymbol{x}_m)) \right.\\
&\left.- {Q}_{\alpha}(\boldsymbol{x}_{m}), 0,\cdots, 0\right)^{\top}\bigg)+\nabla^2{Q}_{\alpha}\dd^*=-\left(\boldsymbol{H}^{\top}_{m+2},\cdots,\boldsymbol{H}^{\top}_{m+n+1}\right)^{\top}\bigg({T}(f(\boldsymbol{x}_1)) \\
&- {Q}_{\alpha}(\boldsymbol{x}_{1}),\cdots,{T}(f(\boldsymbol{x}_m)) - {Q}_{\alpha}(\boldsymbol{x}_{m}),0,\cdots, 0\bigg)^{\top}-\nabla Q_{\alpha}(\boldsymbol{x}_{\rm opt}),
\end{aligned}
\end{equation}
where $\boldsymbol{H}$ is the inverse matrix of the KKT matrix, and $\boldsymbol{H}_{j}$ denotes the $j$-th row of  $\boldsymbol{H}$ here.  
\end{sloppypar}
\end{theorem}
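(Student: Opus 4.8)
The plan is to translate the condition ``$T$ is model optimality-preserving'' into an explicit system by writing out the first-order optimality condition for the trust-region subproblem of $Q_{\text{trans}}$ at the trial step $\dd^*$, and then compare it with the corresponding condition for $Q_{\text{orig}}$. Under Assumption~\ref{assum-import}, the solution $\dd^*$ of the subproblem of $Q_{\text{orig}}$ lies strictly inside the trust region ($\Vert\dd^*\Vert_2<\Delta$) and the model is strictly convex, so $\dd^*$ is characterized by the \emph{unconstrained} stationarity condition $\nabla Q_{\text{orig}}(\boldsymbol{x}_{\text{opt}}+\dd^*)=\boldsymbol{0}$, i.e. $\nabla Q_{\text{orig}}(\boldsymbol{x}_{\text{opt}})+\nabla^2 Q_{\text{orig}}\,\dd^*=\boldsymbol{0}$. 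Because $Q_{\text{trans}}$ is also strictly convex with an interior unconstrained minimizer (continuity of the construction, possibly after shrinking $\Delta$ as the definition permits), ``$T$ is model optimality-preserving'' is equivalent to ``$\dd^*$ is the stationary point of $Q_{\text{trans}}$ as well,'' i.e.
\[
\nabla Q_{\text{trans}}(\boldsymbol{x}_{\text{opt}})+\nabla^2 Q_{\text{trans}}\,\dd^*=\boldsymbol{0}.
\]
So the whole theorem reduces to expanding $\nabla Q_{\text{trans}}(\boldsymbol{x}_{\text{opt}})$ and $\nabla^2 Q_{\text{trans}}$ in terms of the data $T(f(\boldsymbol{x}_i))$.

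Next I would use the structure of the least Frobenius norm updating model developed in Section~\ref{Least Frobenius norm updating quadratic models for transformed objective functions}. Writing $Q_{\text{trans}}=Q_\alpha+D$, the correction $D$ is the least-Frobenius-norm quadratic interpolating the residuals $T(f(\boldsymbol{x}_i))-Q_\alpha(\boldsymbol{x}_i)$, so its coefficient vector $(\boldsymbol{\lambda}^\top,c,\g^\top)^\top$ is obtained by applying $\boldsymbol{H}=\boldsymbol{W}^{-1}$ to the right-hand side $\big(T(f(\boldsymbol{x}_1))-Q_\alpha(\boldsymbol{x}_1),\dots,T(f(\boldsymbol{x}_m))-Q_\alpha(\boldsymbol{x}_m),0,\dots,0\big)^\top$, exactly as in \eqref{lambda-c-g-obtain} and \eqref{form_vec_r} (here with $\boldsymbol{x}_0$ replaced by $\boldsymbol{x}_{\text{opt}}$ as the base point for the expansion). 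From \eqref{new-*}, $\nabla^2 D=\sum_{j=1}^m \lambda_j (\boldsymbol{x}_j-\boldsymbol{x}_{\text{opt}})(\boldsymbol{x}_j-\boldsymbol{x}_{\text{opt}})^\top$ with $\lambda_j=\boldsymbol{H}_j\cdot(\text{rhs})$, and from the form \eqref{D_k_form}, $\nabla D(\boldsymbol{x}_{\text{opt}})=\g$, whose components are the rows $\boldsymbol{H}_{m+2},\dots,\boldsymbol{H}_{m+n+1}$ of $\boldsymbol{H}$ applied to the same right-hand side. Substituting $\nabla^2 Q_{\text{trans}}=\nabla^2 Q_\alpha+\nabla^2 D$ and $\nabla Q_{\text{trans}}(\boldsymbol{x}_{\text{opt}})=\nabla Q_\alpha(\boldsymbol{x}_{\text{opt}})+\g$ into the stationarity equation, collecting the $\nabla^2 Q_\alpha\,\dd^*$ and $\nabla Q_\alpha(\boldsymbol{x}_{\text{opt}})$ terms on the appropriate sides, yields precisely \eqref{1-1}. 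The converse is immediate: if $(T(f(\boldsymbol{x}_1)),\dots,T(f(\boldsymbol{x}_m)))^\top$ solves \eqref{1-1}, then reversing the substitution shows $\nabla Q_{\text{trans}}(\boldsymbol{x}_{\text{opt}}+\dd^*)=\boldsymbol{0}$, and by strict convexity $\dd^*$ is the unique subproblem solution, so the two argmin sets coincide.

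The main obstacle I anticipate is the bookkeeping needed to justify replacing the constrained argmin condition by the unconstrained stationarity condition \emph{for $Q_{\text{trans}}$}: Assumption~\ref{assum-import} gives interiority and strict convexity only for $Q_{\text{orig}}$, whereas after applying $T$ the Hessian $\nabla^2 Q_{\text{trans}}$ and the minimizer location can change. The clean way around this is to read the definition of model optimality-preserving transformation literally --- it only asserts equality of the two argmin \emph{sets} --- so that $\dd^*$ being the argmin of the $Q_{\text{trans}}$-subproblem is, by definition, what has to be characterized; then, since $\Vert\dd^*\Vert_2<\Delta$, any subproblem for which $\dd^*$ is optimal must have $\dd^*$ as an interior (hence unconstrained) critical point, giving $\nabla Q_{\text{trans}}(\boldsymbol{x}_{\text{opt}}+\dd^*)=\boldsymbol{0}$ as a genuine equivalence without needing global convexity of $Q_{\text{trans}}$. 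The remaining work is then purely the linear-algebraic identification of $\g$ and $\nabla^2 D$ with the rows of $\boldsymbol{H}$, which is routine given \eqref{new-*}, \eqref{D_k_form}, and \eqref{lambda-c-g-obtain}, though one must be careful that the base point in \eqref{D_k_form} is taken to be $\boldsymbol{x}_{\text{opt}}$ so that the gradient at $\boldsymbol{x}_{\text{opt}}$ is exactly $\g$ and no extra cross terms appear.
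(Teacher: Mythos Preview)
Your proposal is correct and follows essentially the same route as the paper: write $Q_{\text{trans}}=Q_\alpha+D$ with the coefficients $(\boldsymbol{\lambda}^\top,c,\g^\top)^\top$ read off from $\boldsymbol{H}$ applied to the residual vector (base point taken as $\boldsymbol{x}_{\text{opt}}$), then use that $\Vert\dd^*\Vert_2<\Delta$ forces the interior KKT condition $\nabla^2 Q_{\text{trans}}\,\dd^*=-\nabla Q_{\text{trans}}(\boldsymbol{x}_{\text{opt}})$, and expand this into \eqref{1-1}. The paper's proof is the same computation, just stated more tersely; your extra care about the base-point choice and the identification of $\g$ with rows $m{+}2,\dots,m{+}n{+}1$ of $\boldsymbol{H}$ is exactly what is needed.
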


\begin{proof}
Suppose that the least Frobenius norm updating model of the function $T\circ f$ on $\mathcal{X}$ based on $Q_\alpha$ is 
\[
Q_{u}(\boldsymbol{x})=Q_{\alpha}(\boldsymbol{x})+{c}_{u}+(\boldsymbol{x}-\boldsymbol{x}_{\text{opt}})^{\top}{\g}_{u} +\frac{1}{2} \sum_{j=1}^{m} ({{\boldsymbol{\lambda}}_{u}})_{j}\left(\left(\boldsymbol{x}-\boldsymbol{x}_{\text{opt}}\right)^{\top}\left(\boldsymbol{x}_{j}-\boldsymbol{x}_{\text{opt}}\right)\right)^2.
\]
We can obtain that 
\[
\left(
{{\boldsymbol{\lambda}}}_u^{\top},
{c}_u,
{\g}_u^{\top}
\right)^{\top}=\boldsymbol{H}\left(
{T}(f(\boldsymbol{x}_1)) - {Q}_{\alpha}(\boldsymbol{x}_{1}),
\cdots,
{T}(f(\boldsymbol{x}_m))  - {Q}_{\alpha}(\boldsymbol{x}_{m}),
0,
\cdots,
0
\right)^{\top},
\]
and the base point $\boldsymbol{x}_0$ appearing in the KKT matrix is set as $\boldsymbol{x}_{\text{opt}}$ for simplicity.

According to the KKT conditions \cite{NoceWrig06}, $\dd^*$ is feasible, and it satisfies that
\(
\nabla^{2} {Q}_{u} \dd^{*}=-{\g}_u,
\)  
since $\Vert \dd^*\Vert_2<\Delta$. Combining with (\ref{new-*}) and the definition of $\boldsymbol{H}$, we thus obtain the necessary and sufficient condition.
\end{proof}

\begin{remark}
\label{more_other_translation} 
Suppose that Assumption \ref{assum-import} holds. It holds that for any \(C_2\in \mathbb{R}\), $(f(\boldsymbol{x}_1)+C_2,\cdots,f(\boldsymbol{x}_m)+C_2)^{\top}$ is a solution of linear equations (\ref{1-1}), and hence the translation transformation $T$, satisfying $T\circ f=f+C_2$, is a model optimality-preserving transformation. The conclusion is consistent with Corollary \ref{full}. Besides, if \(\vert \mathcal{X}\vert\geqslant  n+2\), there are more other model optimality-preserving transformations according to Theorem \ref{coro-pc-2}.
\end{remark}

The solution space of (\ref{1-1}) contains a translated linear space, of which the dimension is at least $m-n$. Powell requests \(m\geqslant n+2\) for the least Frobenius norm updating quadratic model applied in NEWUOA \cite{powell2006newuoa}. For Remark \ref{more_other_translation}, if \(n+2 \leqslant m<\frac{1}{2}(n+1)(n+2)\), the model optimality-preserving transformation can be other transformations in addition to the transformations satisfying 
\begin{equation}
\label{onlytranslation}
(T(f(\boldsymbol{x}_1)),\cdots,T(f(\boldsymbol{x}_m)))^{\top}=({f(\boldsymbol{x}_1)}+C_{2},\cdots,{f(\boldsymbol{x}_m)}+C_{2})^{\top}
\end{equation} 
for \(C_2\in \mathbb{R}\). Actually, to obtain the fully linear property, which is an important property of under-determined quadratic model functions, it is shown in the book of Conn, Scheinberg and Vicente \cite{conn2009introduction} that there should be at least \(n+1\) interpolation points. In the following, we present a natural assumption before the further discussion.

\begin{sloppypar}
\begin{assumption}
\label{linearly-independent}
Suppose that the homogeneous linear equations of $(T(f(\boldsymbol{x}_1)),\cdots,$ $T(f(\boldsymbol{x}_m)))^{\top}$ referring to (\ref{1-1}) are linearly independent. 
\end{assumption}
\end{sloppypar}
We then obtain the following corollary. 
\begin{corollary} 
\label{corollaryaboutonlysmallm}
Suppose that Assumptions \ref{assum-import} and \ref{linearly-independent} hold. If \(m=n+1\), then a transformation $T$ is a model optimality-preserving transformation if and only if it satisfies (\ref{onlytranslation}). 
\end{corollary}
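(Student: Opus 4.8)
The plan is to reduce the statement to a rank–nullity count on the linear system (\ref{1-1}) furnished by Theorem \ref{coro-pc-2}. By that theorem, $T$ is a model optimality-preserving transformation if and only if the vector $\boldsymbol{y}:=(T(f(\boldsymbol{x}_1)),\cdots,T(f(\boldsymbol{x}_m)))^{\top}\in{\mathbb{R}}^{m}$ solves (\ref{1-1}). I would first rewrite (\ref{1-1}) as an inhomogeneous linear system $\boldsymbol{L}\boldsymbol{y}=\boldsymbol{b}$, where $\boldsymbol{L}\in{\mathbb{R}}^{n\times m}$ collects the coefficients of the entries of $\boldsymbol{y}$ occurring on both sides (the $T(f(\boldsymbol{x}_i))$ appear both inside the brackets $\boldsymbol{H}_j(\cdots)$ on the left and inside the product with $(\boldsymbol{H}_{m+2}^{\top},\cdots,\boldsymbol{H}_{m+n+1}^{\top})^{\top}$ on the right), while the constant terms $Q_{\alpha}(\boldsymbol{x}_i)$, $\nabla^{2}Q_{\alpha}\dd^*$ and $\nabla Q_{\alpha}(\boldsymbol{x}_{\rm opt})$ are absorbed into $\boldsymbol{b}\in{\mathbb{R}}^{n}$. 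There are exactly $n$ scalar equations here, since both sides of (\ref{1-1}) are vectors in ${\mathbb{R}}^{n}$, and there are $m$ unknowns $y_j=T(f(\boldsymbol{x}_j))$.

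Next I would record two elementary facts. First, the identity transformation is trivially model optimality-preserving, because then $Q_{\rm orig}=Q_{\rm trans}$; hence $\boldsymbol{y}^{(0)}:=(f(\boldsymbol{x}_1),\cdots,f(\boldsymbol{x}_m))^{\top}$ is a particular solution of $\boldsymbol{L}\boldsymbol{y}=\boldsymbol{b}$, so in particular the system is consistent. Second, by Remark \ref{more_other_translation}, $\boldsymbol{y}^{(0)}+C_2(1,\cdots,1)^{\top}$ solves (\ref{1-1}) for every $C_2\in{\mathbb{R}}$; subtracting the particular solution $\boldsymbol{y}^{(0)}$ shows that $(1,\cdots,1)^{\top}\in\ker\boldsymbol{L}$.

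Then I would invoke the hypotheses. Since $m=n+1$, Assumption \ref{linearly-independent} asserts that the $n$ homogeneous equations associated with (\ref{1-1}), i.e.\ the rows of $\boldsymbol{L}$, are linearly independent, so $\operatorname{rank}\boldsymbol{L}=n$ and $\dim\ker\boldsymbol{L}=m-n=1$. Combined with the second fact above, this forces $\ker\boldsymbol{L}=\operatorname{span}\{(1,\cdots,1)^{\top}\}$. Consequently the full solution set of (\ref{1-1}) is exactly
\[
\boldsymbol{y}^{(0)}+\ker\boldsymbol{L}=\bigl\{(f(\boldsymbol{x}_1)+C_2,\cdots,f(\boldsymbol{x}_m)+C_2)^{\top}:C_2\in{\mathbb{R}}\bigr\},
\]
which is precisely the set of vectors described by (\ref{onlytranslation}). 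Combining this characterization with the equivalence from Theorem \ref{coro-pc-2} gives the corollary: under Assumptions \ref{assum-import} and \ref{linearly-independent} with $m=n+1$, the transformation $T$ is model optimality-preserving if and only if $(T(f(\boldsymbol{x}_1)),\cdots,T(f(\boldsymbol{x}_m)))^{\top}$ satisfies (\ref{onlytranslation}).

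The one point that needs genuine care — and the only modest obstacle — is the bookkeeping behind the second fact: one must verify that the homogeneous equations named in Assumption \ref{linearly-independent} really are the rows of the matrix $\boldsymbol{L}$ isolated above (that is, that the $y_j$-dependence of (\ref{1-1}) has been collected correctly across both sides), and that the all-ones vector indeed lies in its kernel, so that the one-dimensional kernel is spanned by it rather than by some other direction. Everything beyond that is a routine affine-solution-space argument.
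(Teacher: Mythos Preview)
Your proposal is correct and follows essentially the same approach as the paper: both arguments observe that under Assumption \ref{linearly-independent} with $m=n+1$ the affine solution set of (\ref{1-1}) is one-dimensional, and since the translations of Remark \ref{more_other_translation} already form a one-parameter family of solutions, they must exhaust the solution set. Your write-up is simply a more explicit version of the paper's two-line proof, with the rank--nullity bookkeeping spelled out.
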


\begin{proof}
When Assumption \ref{linearly-independent} holds, if \(m=n+1\), the dimension of the solution space of (\ref{1-1}) is 1. Thus we arrive at the conclusion. 
\end{proof}

\begin{remark}
If the subproblem for obtaining the quadratic model function is chosen as (\ref{leasthessianandgradient}), then the results of Theorem \ref{coro-pc-2}, Corollary \ref{corollaryaboutonlysmallm} and the analysis above still holds for the corresponding matrix \(\boldsymbol{H}\) being the inverse matrix of the KKT matrix in (\ref{Wofleasthessianandgradient}). Moreover, in the case where \(m \leqslant n\) and the rest parts of Assumptions \ref{assum-import} and \ref{linearly-independent} hold, a transformation $T$ is a model optimality-preserving transformation if and only if it satisfies
\(
(T(f(\boldsymbol{x}_1)),\cdots,T(f(\boldsymbol{x}_m)))^{\top}=({f(\boldsymbol{x}_1)},\cdots,{f(\boldsymbol{x}_m)})^{\top}.
\) The conclusion above holds directly since the solution of (\ref{1-1}) is unique if \(m \leqslant n\).
\end{remark}

\subsubsection{An example}

An example of model optimality-preserving transformation is in the following. 
\begin{sloppypar}
\begin{example}
\label{realexample}
Suppose that the base point $\boldsymbol{x}_0$ and initial interpolation points $\boldsymbol{x}_1,\boldsymbol{x}_2,\boldsymbol{x}_3,\boldsymbol{x}_4,\boldsymbol{x}_5$ are $\boldsymbol{x}_0=(0,0)^{\top}, \boldsymbol{x}_1=(0,0)^{\top},\boldsymbol{x}_2=(1,0)^{\top},\boldsymbol{x}_3=(0,1)^{\top},\boldsymbol{x}_4=(-1,0)^{\top},\boldsymbol{x}_5=(0,-1)^{\top}$, and the original black-box objective function is 
\[
f(x,y)=\frac{1}{2}((x-y)^2+(x-1)^2+(y-1)^2).
\]
Notice that $x$ and $y$ here denote the components of the 2-dimensional variable. In this example, we set the trust-region radius as 10. We know that
\(f(\boldsymbol{x}_1)=1,f(\boldsymbol{x}_2)=1,f(\boldsymbol{x}_3)=1,f(\boldsymbol{x}_4)=3,f(\boldsymbol{x}_5)=3\), $\boldsymbol{x}_{\text{opt}}^{(1)}=\boldsymbol{x}_{1}$, and then we can obtain that \({\boldsymbol{\lambda}}=(-4,1,1,1,1)^{\top},c=1,\g=(-1,-1)^{\top}\), and 
\[
Q_1(x,y)=1 - x - y + x^2 + y^2,
\] 
after calculating the inverse matrix of the KKT matrix, we say $\boldsymbol{H}$. Then we add the minimum point of the model in the trust region, i.e., $\boldsymbol{x}_{\text{new}}=(\frac{1}{2},\frac{1}{2})^{\top}$, into the interpolation set and drop the point $\boldsymbol{x}_{5
}$. Besides, we know that $\boldsymbol{x}_{\text{opt}}^{(2)}=\boldsymbol{x}_{\text{new}}$, and we set the base point $\boldsymbol{x}_0=\boldsymbol{x}_{\text{opt}}^{(2)}$ for simplicity. After obtaining the inverse matrix of the new KKT matrix, we say $\boldsymbol{H}_{\text{new}}$, we can know that
\(f(\boldsymbol{x}_{\text{new}})=\frac{1}{4},Q_1(\boldsymbol{x}_{\text{new}})=\frac{1}{2}\), and then we can obtain that \({\boldsymbol{\lambda}}^{+}=(\frac{2}{3},1,\frac{4}{3},-\frac{1}{3},-\frac{8}{3})^{\top}\), \(c^{+}=-\frac{1}{4},\g^{+}=(-\frac{1}{3},-\frac{1}{3})^{\top}\), \(D(x,y)=-\frac{2}{3}xy+\frac{1}{3}y^2-\frac{1}{3}y\), and 
\[
Q_2(x,y)=Q_1(x,y)+D(x,y)=x^2-\frac{2}{3}xy-x+\frac{4}{3}y^2-\frac{4}{3}y+1.
\]
Then we obtain the solution of the model function in the trust-region region $\{\boldsymbol{x}: \Vert \boldsymbol{x}-\boldsymbol{x}_{\text{opt}}^{(2)}\Vert_2\leqslant 10\}$, i.e., $\dd^{*}=(\frac{5}{22},\frac{2}{11})^{\top}$, and the next iteration/interpolation point is $(\frac{8}{11},\frac{15}{22})^{\top}$. Sustituting $\dd^*$ into equation (\ref{1-1}), we obtain the necessary and sufficient condition
\begin{equation}
\label{sufconreal}
\begin{aligned}
T(f(\boldsymbol{x}_4)) &= 2 + \frac{9 T(f(\boldsymbol{x}_1))}{10} - \frac{27 T(f(\boldsymbol{x}_2))}{5} + \frac{11 T(f(\boldsymbol{x}_3))}{2},\\
T(f(\boldsymbol{x}_{\text{new}})) &= -\frac{3}{4} + \frac{33 T(f(\boldsymbol{x}_1))}{40} + \frac{21 T(f(\boldsymbol{x}_2))}{20} - \frac{7 T(f(\boldsymbol{x}_3))}{8},
  \end{aligned}
\end{equation}
and the solution space is 
\[
\begin{pmatrix}
T(f(\boldsymbol{x}_1))\\
T(f(\boldsymbol{x}_2))\\
T(f(\boldsymbol{x}_3))\\
T(f(\boldsymbol{x}_4))\\
T(f(\boldsymbol{x}_{\text{new}}))
\end{pmatrix}=
\begin{pmatrix}
0\\
0\\
0\\
2\\
-\frac{3}{4}
\end{pmatrix}
+k_1
\begin{pmatrix}
40\\
0\\
0\\
36\\
33
\end{pmatrix}
+k_2
\begin{pmatrix}
0\\
20\\
0\\
-108\\
21
\end{pmatrix}
+k_3
\begin{pmatrix}
0\\
0\\
8\\
44\\
-7
\end{pmatrix},
\]
where $k_1,k_2,k_3\in {\mathbb{R}}$. We can see that it contains the translation transformation, which refers to the constants satisfying $k_2=2k_1, k_3=5k_1$. Notice that the original function values $f(\boldsymbol{x}_1),f(\boldsymbol{x}_2),f(\boldsymbol{x}_3),f(\boldsymbol{x}_4),f(\boldsymbol{x}_\text{new})$ satisfy (\ref{sufconreal}). 

In Fig. \ref{realexamplefig}, the one on the left-hand side contains the iteration/interpolation points at the first iteration, with original objective function values. The one on the right-hand side contains the iteration/interpolation points at the second iteration, with objective function values transformed by model optimality-preserving transformations. 

\begin{figure}[htbp]
    \centering

\fbox{
\includegraphics[width=0.95\linewidth]{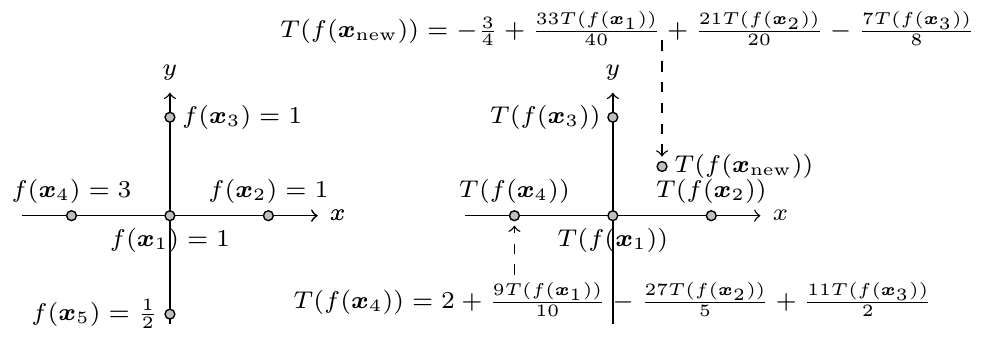}
}

    \caption{Model optimality-preserving transformations in Example \ref{realexample}\label{realexamplefig}}
\end{figure}
 
\end{example}
\end{sloppypar}

\section[Positive monotonic \& affine transformations]{Positive monotonic transformations and affine transformations}
\label{Positive Monotonic Transformations and Affine Transformations}

Notice that the solution $\boldsymbol{d}_{k}$ of the subproblem of $Q_k$ with trust-region radius $\Delta_k$ is an approximation to the solution of the subproblem of $f_k$ with trust-region radius $\Delta_k$, since $Q_{k}$ is the local quadratic interpolation model function of $f_{k}$. Therefore, we propose the definition of the objective optimality-preserving transformation in the following.  

\begin{definition}[Objective optimality-preserving transformation] 
We say a transformation $T$ is an objective optimality-preserving transformation with trust-region radius $\Delta$, if the solution of the subproblem of the objective function $f$ with trust-region radius $\Delta$ is the same as the solution of the subproblem of $T\circ f$ with trust-region radius $\Delta$. {Mathematically, given the point \(\boldsymbol{x}_{\text{opt}}\in {\mathbb{R}}^{n}\), we say a transformation $T$ is an objective optimality-preserving transformation with trust-region radius $\Delta$, if 
\[
\arg\min_{{\Vert \dd \Vert_{2} \leqslant \Delta}} \ f(\boldsymbol{x}_{\text{opt}}+\dd)
=\arg\min_{{\Vert \dd \Vert_{2} \leqslant \Delta}} \ (T\circ f)(\boldsymbol{x}_{\text{opt}}+\dd).  
\]}  
\end{definition}

This section will illustrate the objective functions with some fundamental transformations and the corresponding least Frobenius norm updating model functions. We first present the definition of positive monotonic transformations.
\begin{definition}\label{PMT}
If a transformation $T: {\mathbb{R}}\rightarrow {\mathbb{R}}$ preserves the order of data, namely $T(\theta_1)>T(\theta_2)$ for $\theta_1>\theta_2$, and $T(\theta_1)=T(\theta_2)$ for $\theta_1=\theta_2$, then we say that $T$ is a positive monotonic transformation.
\end{definition}

Then we can directly obtain the following theorem.
\begin{theorem} 
If the transformation $T$ is a positive monotonic transformation, then it is an objective optimality-preserving transformation with any trust-region radius.
\end{theorem}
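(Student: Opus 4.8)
The plan is to reduce the claim directly to the order-preserving property built into the definition of a positive monotonic transformation. First I would recall what has to be shown: for every trust-region radius $\Delta$ and every center $\boldsymbol{x}_{\text{opt}}\in{\mathbb{R}}^n$,
\[
\arg\min_{\|\dd\|_2\leqslant\Delta}\ f(\boldsymbol{x}_{\text{opt}}+\dd)
=\arg\min_{\|\dd\|_2\leqslant\Delta}\ (T\circ f)(\boldsymbol{x}_{\text{opt}}+\dd).
\]
The two optimization problems share the same feasible set $\{\dd\in{\mathbb{R}}^n:\|\dd\|_2\leqslant\Delta\}$, so it suffices to compare the objective orderings on this set.

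The key step is the observation that a positive monotonic $T$ is strictly increasing, hence it both preserves and reflects the $\leqslant$ relation: for any reals $\theta_1,\theta_2$ one has $\theta_1\leqslant\theta_2$ if and only if $T(\theta_1)\leqslant T(\theta_2)$ (the forward implication is Definition \ref{PMT} applied to the strict and equality cases separately; the converse follows by contraposition, since $\theta_1>\theta_2$ would force $T(\theta_1)>T(\theta_2)$). Applying this with $\theta_i=f(\boldsymbol{x}_{\text{opt}}+\dd_i)$ as $\dd_1,\dd_2$ range over the feasible ball shows that a point $\dd^*$ minimizes $f(\boldsymbol{x}_{\text{opt}}+\cdot)$ over the ball exactly when it minimizes $(T\circ f)(\boldsymbol{x}_{\text{opt}}+\cdot)$ over the ball; equivalently, the two $\arg\min$ sets coincide, whether or not the minimizer is unique, and with no appeal to smoothness or convexity. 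Since $\Delta$ was arbitrary, this is the theorem.

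There is essentially no obstacle here; the only thing to be careful about is not to use more structure than Definition \ref{PMT} supplies — in particular $T$ need not be continuous or differentiable, so the argument must stay at the level of comparing function values rather than gradients or stationary points. I would also remark in passing that it is the \emph{strictness} of the monotonicity that is genuinely used: a merely nondecreasing $T$ that is constant on some interval could create spurious minimizers and would fail to be objective optimality-preserving in general, which is precisely why the hypothesis is a positive monotonic transformation.
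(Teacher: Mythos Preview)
Your proposal is correct and follows exactly the same route as the paper, which simply records that ``the conclusion is derived from Definition~\ref{PMT}.'' You have merely spelled out in detail what that one-line proof means, including the careful remark that strictness of the monotonicity is what guarantees the two $\arg\min$ sets coincide.
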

\begin{proof}
The conclusion is derived from Definition \ref{PMT}. 
\end{proof}

Positive monotonic transformations can be any strictly monotonic increasing functions, such as linear functions with positive (multiplication) coefficients, exponential functions, and power functions with odd positive power. 
Here, we give the simplest example, namely the affine transformation.
\begin{example}
An affine transformation $T$, satisfying $T\circ f=C_{1}f+C_{2}$, where the constants $C_{1}, C_{2}\in {\mathbb{R}}$, and $C_{1}>0$, is a positive monotonic transformation.
\end{example}

We will see that the least Frobenius norm updating quadratic model can not be obtained by the same transformation as that of the original objective function, even if the objective function $f$ is affinely transformed to be $C_{1} f+C_{2}$ at a step, where $C_{1}>0$. In other words, in the previous section, we have shown that the affine transformation is normally not a model optimality-preserving transformation. However, the case is fundamental and of practical importance where the objective function is affinely transformed before the output. Therefore, in the rest of this article, we will further discuss the model of affinely transformed objective functions, the analytic expression of its model function, and the fully linear constants of the interpolation model. Moreover, we will give the corresponding numerical experiments and attempt to use them as typical examples of objective optimality-preserving transformations to show the numerical efficiency of our method.

We give the following theorem to obtain the exact expression of the corresponding model of the affinely transformed objective functions. In the rest of this section, we use the notation $\boldsymbol{x}_t$ to denote $\boldsymbol{x}_{\text{new}}$ for simplicity, since $\boldsymbol{x}_{\text{new}}$ has indeed been put at the $t$-th position of the interpolation set before obtaining the $k$-th model. 
\begin{theorem}\label{C1,k+C2,k}
Suppose that $Q_{\alpha}$ is a quadratic function, and $\mathcal{X}=\{\boldsymbol{x}_1,\cdots,\boldsymbol{x}_m\}\subset {\mathbb{R}}^{n}$ is a poised set, where \(n+1 \leqslant m<\frac{1}{2}(n+1)(n+2)\). Then it holds that 
\begin{equation}
\label{equationoftransformationC1f+C2}
\mathcal{M}^{\mathcal{X}}_{Q_{\alpha}}(C_1f+C_2)=(C_1\mathcal{M}^{\mathcal{X}}_{Q_{\alpha}}(f)+C_2)+(C_{1}-1)(\mathcal{M}^{\mathcal{X}}_{0}(Q_{\alpha})-Q_{\alpha}),
\end{equation} 
for any \(C_1,C_2\in\mathbb{R}\), where $\mathcal{M}^{\mathcal{X}}_{0}(Q_{\alpha})$ denotes the least Frobenius norm updating quadratic model of the function $Q_{\alpha}$ on $\mathcal{X}$ based on the zero function, i.e., the least Frobenius norm quadratic model of \(Q_{\alpha}\). 
\end{theorem}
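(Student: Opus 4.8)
The plan is to reduce everything to the linearity of the linear system that defines the least Frobenius norm updating model, after rewriting the right-hand side of that system in a convenient form. First I would recall that, by the definition of $\mathcal{M}^{\mathcal{X}}_{Q_{\alpha}}$ in (\ref{thenewdefmodel}), the substitution $Q=Q_{\alpha}+D$ turns the problem into minimizing $\Vert\nabla^{2}D\Vert_{F}^{2}$ subject to $D(\boldsymbol{x}_{i})=h(\boldsymbol{x}_{i})-Q_{\alpha}(\boldsymbol{x}_{i})$ for $i=1,\dots,m$, exactly as in the passage from (\ref{leastfrob-2}) to (\ref{miniFrob}). Hence $\mathcal{M}^{\mathcal{X}}_{Q_{\alpha}}(h)=Q_{\alpha}+D_{h}$, where $D_{h}$ has the form (\ref{D_k_form}) (with a fixed common base point $\boldsymbol{x}_{0}$) and its coefficients $({\boldsymbol{\lambda}}^{\top},c,\g^{\top})^{\top}$ are $\boldsymbol{H}(\rr_{h}^{\top},0,\dots,0)^{\top}$ with $\rr_{h}=(h(\boldsymbol{x}_{1})-Q_{\alpha}(\boldsymbol{x}_{1}),\dots,h(\boldsymbol{x}_{m})-Q_{\alpha}(\boldsymbol{x}_{m}))^{\top}$; the matrix $\boldsymbol{H}$ (and hence invertibility of the KKT matrix $\boldsymbol{W}$) is the same for every choice of base model, since the base model enters only the right-hand side. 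In the same notation, $\mathcal{M}^{\mathcal{X}}_{0}(Q_{\alpha})$ is obtained with base model $0$ and data vector $(Q_{\alpha}(\boldsymbol{x}_{1}),\dots,Q_{\alpha}(\boldsymbol{x}_{m}))^{\top}$.

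The key step is the componentwise identity
\[
C_{1}f(\boldsymbol{x}_{i})+C_{2}-Q_{\alpha}(\boldsymbol{x}_{i})=C_{1}\big(f(\boldsymbol{x}_{i})-Q_{\alpha}(\boldsymbol{x}_{i})\big)+(C_{1}-1)Q_{\alpha}(\boldsymbol{x}_{i})+C_{2},
\]
which shows that the data vector $\rr$ of $h=C_{1}f+C_{2}$ relative to $Q_{\alpha}$ splits as $\rr=C_{1}\rr_{f}+(C_{1}-1)\rr_{Q_{\alpha}}+C_{2}\boldsymbol{e}$, where $\rr_{f}$ is the data vector of $f$ relative to $Q_{\alpha}$, $\rr_{Q_{\alpha}}$ is the data vector of $Q_{\alpha}$ relative to the zero function, and $\boldsymbol{e}=(1,\dots,1)^{\top}$. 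Since $\rr\mapsto\boldsymbol{H}(\rr^{\top},0,\dots,0)^{\top}$ and the passage from those coefficients to the quadratic function (\ref{D_k_form}) are both linear, applying $\mathcal{M}$ termwise gives
\[
\mathcal{M}^{\mathcal{X}}_{Q_{\alpha}}(C_{1}f+C_{2})=Q_{\alpha}+C_{1}\big(\mathcal{M}^{\mathcal{X}}_{Q_{\alpha}}(f)-Q_{\alpha}\big)+(C_{1}-1)\mathcal{M}^{\mathcal{X}}_{0}(Q_{\alpha})+\mathcal{M}^{\mathcal{X}}_{0}(C_{2}),
\]
where $\mathcal{M}^{\mathcal{X}}_{0}(C_{2})$ denotes the least Frobenius norm model, based on $0$, of the constant function $C_{2}$.

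It then suffices to observe that $\mathcal{M}^{\mathcal{X}}_{0}(C_{2})$ is the constant function $C_{2}$: the constant function interpolates the data on $\mathcal{X}$ and has vanishing Hessian, which is the unconstrained minimizer of $\Vert\nabla^{2}Q\Vert_{F}^{2}$, so it is the (unique, by invertibility of $\boldsymbol{W}$) solution. Substituting $\mathcal{M}^{\mathcal{X}}_{0}(C_{2})=C_{2}$ and regrouping
\[
Q_{\alpha}+C_{1}\mathcal{M}^{\mathcal{X}}_{Q_{\alpha}}(f)-C_{1}Q_{\alpha}+(C_{1}-1)\mathcal{M}^{\mathcal{X}}_{0}(Q_{\alpha})+C_{2}=\big(C_{1}\mathcal{M}^{\mathcal{X}}_{Q_{\alpha}}(f)+C_{2}\big)+(C_{1}-1)\big(\mathcal{M}^{\mathcal{X}}_{0}(Q_{\alpha})-Q_{\alpha}\big)
\]
yields (\ref{equationoftransformationC1f+C2}).

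I do not anticipate a real obstacle; the two points needing care are keeping track of which model is taken relative to $Q_{\alpha}$ and which relative to the zero function (so that the splitting of $\rr$ matches the definitions), and making the linearity argument fully rigorous. For the latter, a robust alternative is to verify directly that the right-hand side of (\ref{equationoftransformationC1f+C2}) equals $\mathcal{M}^{\mathcal{X}}_{Q_{\alpha}}(C_{1}f+C_{2})$: it interpolates $C_{1}f+C_{2}$ on $\mathcal{X}$ because $\mathcal{M}^{\mathcal{X}}_{Q_{\alpha}}(f)(\boldsymbol{x}_{i})=f(\boldsymbol{x}_{i})$ and $\mathcal{M}^{\mathcal{X}}_{0}(Q_{\alpha})(\boldsymbol{x}_{i})=Q_{\alpha}(\boldsymbol{x}_{i})$, and its Hessian minus $\nabla^{2}Q_{\alpha}$ has the form $\sum_{j=1}^{m}\nu_{j}(\boldsymbol{x}_{j}-\boldsymbol{x}_{0})(\boldsymbol{x}_{j}-\boldsymbol{x}_{0})^{\top}$ with $\nu_{j}=C_{1}\lambda_{j}^{(f)}+(C_{1}-1)\mu_{j}$ satisfying $\sum_{j}\nu_{j}=0$ and $\sum_{j}\nu_{j}(\boldsymbol{x}_{j}-\boldsymbol{x}_{0})=\boldsymbol{0}$ (since the multipliers $\lambda^{(f)}$ of $\mathcal{M}^{\mathcal{X}}_{Q_{\alpha}}(f)$ and $\mu$ of $\mathcal{M}^{\mathcal{X}}_{0}(Q_{\alpha})$ do); by the uniqueness characterization (\ref{new-*}) it must therefore coincide with $\mathcal{M}^{\mathcal{X}}_{Q_{\alpha}}(C_{1}f+C_{2})$.
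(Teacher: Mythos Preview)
Your proposal is correct and follows essentially the same route as the paper: both reduce to the difference function $D$, split the right-hand side data vector via the identity $C_{1}f(\boldsymbol{x}_{i})+C_{2}-Q_{\alpha}(\boldsymbol{x}_{i})=C_{1}(f(\boldsymbol{x}_{i})-Q_{\alpha}(\boldsymbol{x}_{i}))+(C_{1}-1)Q_{\alpha}(\boldsymbol{x}_{i})+C_{2}$, and invoke linearity of the map $\rr\mapsto\boldsymbol{H}(\rr^{\top},0,\dots,0)^{\top}$ together with the fact that the constant data $C_{2}\boldsymbol{e}$ yields the constant function $C_{2}$. Your closing alternative verification via the KKT characterization (\ref{new-*}) is a nice self-check not present in the paper, but the main argument is the same.
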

\begin{proof}
Let $Q_{\beta}:=\mathcal{M}^{\mathcal{X}}_{Q_{\alpha}}(f)$, $\hat{Q}_{\beta}:=\mathcal{M}^{\mathcal{X}}_{Q_{\alpha}}(C_1f+C_2)$ and $\tilde{Q}:=\mathcal{M}^{\mathcal{X}}_{0}(Q_\alpha)$.   
Let $D_{\beta}:=Q_{\beta}-Q_{\alpha}$ and $\hat{D}_{\beta}:=\hat{Q}_{\beta}-{Q}_{\alpha}$. Then the quadratic function $D_{\beta}$ is the solution of  
\[
\begin{aligned}
\underset{D}{\min}\ &\left\Vert \nabla^2 D\right\Vert_F^2 \\ 
\text{subject to} \ &D(\boldsymbol{x})=f(\boldsymbol{x})-Q_{\alpha}(\boldsymbol{x}),\ \forall \ \boldsymbol{x}  \in \mathcal{X},
\end{aligned}
\]
and the quadratic function $\hat{D}_{\beta}$ is the solution of 
\[
\begin{aligned} 
\underset{D}{\min}\ &\left\Vert \nabla^2 D\right\Vert_F^2 \\  
\text{subject to} \ &D(\boldsymbol{x})=C_{1}f(\boldsymbol{x})+C_{2}-{Q}_{\alpha}(\boldsymbol{x}),\ \forall \ \boldsymbol{x}\in \mathcal{X}.
\end{aligned}
\]

\begin{sloppypar}
We denote the parameters of the quadratic functions $D_{\beta}$ and $\hat{D}_{\beta}$ by ${\boldsymbol{\lambda}}_D\in{\mathbb{R}}^{m}$, $c_D\in{\mathbb{R}}$, $\g_D\in{\mathbb{R}}^{n}$ and ${\boldsymbol{\lambda}}_{\hat{D}}\in{\mathbb{R}}^{m}$, $c_{\hat{D}}\in{\mathbb{R}}$, $\g_{\hat{D}}\in{\mathbb{R}}^{n}$ separately. Besides, $({\boldsymbol{\lambda}}_D^{\top}, c_D, \g_D^{\top})^{\top}$ and $({\boldsymbol{\lambda}}_{\hat{D}}^{\top}, c_{\hat{D}}, \g_{\hat{D}}^{\top})^{\top}$ share the same inverse matrix of the KKT matrix, $\boldsymbol{H}$, i.e., it holds that 
\begin{align*}
\left(
{\boldsymbol{\lambda}}_D^{\top}, c_D, \g_D^{\top}\right)^{\top}=&\boldsymbol{H} \left(
f(\boldsymbol{x}_1)-Q_{\alpha}(\boldsymbol{x}_1),
\cdots,
f(\boldsymbol{x}_m)-Q_{\alpha}(\boldsymbol{x}_m),
0,
\cdots,
0
\right)^{\top},\\
\left(
     {\boldsymbol{\lambda}}_{\hat{D}}^{\top},c_{\hat{D}},g_{\hat{D}}^{\top}
\right)^{\top}=&
\boldsymbol{H}\left(
    C_{1}f(\boldsymbol{x}_1)+C_{2}-{Q}_{\alpha}(\boldsymbol{x}_1),
    \cdots,
   C_{1}f(\boldsymbol{x}_m)+C_{2}-{Q}_{\alpha}(\boldsymbol{x}_m),\right.\\
   &\left.
    0,
    \cdots,
    0
\right)^{\top},
\end{align*}
where the matrix $\boldsymbol{H}\in{\mathbb{R}}^{(m+n+1)\times(m+n+1)}$ is defined as the inverse matrix of the KKT matrix. It directly holds that
\begin{align*}
&\left(
{\boldsymbol{\lambda}}_{\hat{D}}^{\top},c_{\hat{D}},\g_{\hat{D}}^{\top}\right)^{\top}=C_{1}\left(
{\boldsymbol{\lambda}}_D^{\top},c_D,\g_D^{\top}
\right)^{\top}+\boldsymbol{H}\left(
 C_{2},
 \cdots,
 C_{2},
 0,
 \cdots,
 0
\right)^{\top}\\
&+C_{1}\boldsymbol{H}\left(
 Q_{\alpha}(\boldsymbol{x}_1),
 \cdots,
 Q_{\alpha}(\boldsymbol{x}_m),
 0,
 \cdots,
 0
\right)^{\top}
-\boldsymbol{H}\left(
 {Q}_{\alpha}(\boldsymbol{x}_1),
 \cdots,
 {Q}_{\alpha}(\boldsymbol{x}_m),
 0,
 \cdots,
 0
\right)^{\top}.
\end{align*}
Thus, it follows that
$\hat{D}_{\beta}=(C_{1} D_{\beta}+C_{2})+(C_{1}-1)\tilde{Q}$, where $\tilde{Q}$ is the solution of problem 
\[ 
\begin{aligned}
\min_{Q} \ &\left\Vert \nabla^2 Q\right\Vert_F^2\\  
\text{subject to} \  &Q(\boldsymbol{x})=Q_{\alpha}(\boldsymbol{x}),\ \forall \  \boldsymbol{x}\in \mathcal{X}.
\end{aligned}
\]
\end{sloppypar}
Then it holds that
\[
\begin{aligned}
\hat{Q}_{\beta} &={Q}_{\alpha}+\hat{D}_{\beta}={Q}_{\alpha}+(C_{1} (Q_{\beta}-Q_{\alpha})+C_{2})+(C_{1}-1)\tilde{Q}\\
&=C_1{Q}_\beta+C_2
+(C_{1}-1)(\tilde{Q}-Q_{\alpha}).
\end{aligned}
\] 
Therefore, (\ref{equationoftransformationC1f+C2}) holds and the theorem is proved.
\end{proof}

We then obtain the following corollary. 

\begin{corollary}
\label{full} 
\begin{sloppypar}
Suppose that $\mathcal{X}=\{\boldsymbol{x}_1,\cdots,\boldsymbol{x}_m\}\subset {\mathbb{R}}^{n}$ is a poised set, where \(n+1 \leqslant m<\frac{1}{2}(n+1)(n+2)\). If $L_{\alpha}$ is a linear function, then 
\begin{equation}\label{sameaffinemodelobj}
\mathcal{M}^{\mathcal{X}}_{L_{\alpha}}(C_1f+C_2)=C_1\mathcal{M}^{\mathcal{X}}_{L_{\alpha}}(f)+C_2, 
\end{equation}  
for any \(C_1,C_2\in\mathbb{R}\). 
\end{sloppypar}
\end{corollary}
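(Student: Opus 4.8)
The plan is to deduce Corollary~\ref{full} directly from Theorem~\ref{C1,k+C2,k} by showing that the ``correction term'' $(C_1-1)(\mathcal{M}^{\mathcal{X}}_{0}(Q_{\alpha})-Q_{\alpha})$ appearing in \eqref{equationoftransformationC1f+C2} vanishes identically when the base quadratic $Q_\alpha$ is a linear function $L_\alpha$. So the whole argument reduces to the single claim
\[
\mathcal{M}^{\mathcal{X}}_{0}(L_{\alpha})=L_{\alpha},
\]
i.e., a linear function is its own least Frobenius norm (quadratic) model on any poised set $\mathcal{X}$.

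To prove that claim, recall that $\mathcal{M}^{\mathcal{X}}_{0}(L_{\alpha})$ is by definition the unique solution of
\[
\begin{aligned}
\min_{Q}\ &\left\Vert \nabla^{2}Q\right\Vert_{F}^{2}\\
\text{subject to}\ &Q(\boldsymbol{x})=L_{\alpha}(\boldsymbol{x}),\ \forall\ \boldsymbol{x}\in\mathcal{X},
\end{aligned}
\]
the uniqueness following from the poisedness of $\mathcal{X}$, exactly as in the analysis of Powell~\cite{Leastf} invoked earlier in the excerpt. Now observe that $L_{\alpha}$ itself is a quadratic function (of degree $\leqslant 1 \leqslant 2$), it trivially satisfies the interpolation conditions $L_{\alpha}(\boldsymbol{x})=L_{\alpha}(\boldsymbol{x})$ for all $\boldsymbol{x}\in\mathcal{X}$, and it attains the objective value $\left\Vert\nabla^{2}L_{\alpha}\right\Vert_{F}^{2}=0$, which is the global minimum of the nonnegative objective. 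Hence $L_{\alpha}$ is a minimizer, and by uniqueness it is \emph{the} minimizer, giving $\mathcal{M}^{\mathcal{X}}_{0}(L_{\alpha})=L_{\alpha}$ and therefore $\mathcal{M}^{\mathcal{X}}_{0}(L_{\alpha})-L_{\alpha}\equiv 0$.

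Plugging $Q_\alpha=L_\alpha$ into \eqref{equationoftransformationC1f+C2} and using the vanishing of the correction term yields
\[
\mathcal{M}^{\mathcal{X}}_{L_{\alpha}}(C_1f+C_2)=C_1\mathcal{M}^{\mathcal{X}}_{L_{\alpha}}(f)+C_2
\]
for all $C_1,C_2\in\mathbb{R}$, which is \eqref{sameaffinemodelobj}. I expect no real obstacle here: the proof is essentially a one-line specialization of Theorem~\ref{C1,k+C2,k}, and the only point requiring a word of justification is the identification $\mathcal{M}^{\mathcal{X}}_{0}(L_{\alpha})=L_{\alpha}$, which rests on feasibility of $L_\alpha$ together with uniqueness of the least Frobenius norm model on a poised set. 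One could also remark that this corollary reconciles the example computations and Remark~\ref{more_other_translation}: when the ``history'' model $Q_{k-1}$ is linear (e.g., at the first iteration, where $\nabla^2 Q_0 = 0$), affine transformations of the objective act on the model exactly as affine transformations, and in particular translations are model optimality-preserving in that situation.
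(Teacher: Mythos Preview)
Your proposal is correct and follows exactly the same approach as the paper: apply Theorem~\ref{C1,k+C2,k} with $Q_\alpha=L_\alpha$ and observe that the correction term vanishes because $\mathcal{M}^{\mathcal{X}}_{0}(L_\alpha)=L_\alpha$. The paper's proof is simply a terser version of yours, invoking ``the interpolation'' where you spell out the feasibility-plus-uniqueness argument.
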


\begin{proof}
Since \(\vert \mathcal{X}\vert\geqslant n+1\) and $L_{\alpha}$ is a linear function, then $\mathcal{M}^{\mathcal{X}}_{0}(L_\alpha)=L_\alpha$ according to the interpolation. Then (\ref{sameaffinemodelobj}) holds according to (\ref{equationoftransformationC1f+C2}). 
\end{proof}

Corollary refers to constructing the least Frobenius norm model, since \(\nabla^2 L_{\alpha}\) is a zero matrix. 

Notice that normally $\mathcal{M}^{\mathcal{X}}_{0}(Q_\alpha) \ne Q_{\alpha}$. Therefore the least Frobenius norm updating model of the function $C_1f+C_2$ on $\mathcal{X}$ based on $Q_{\alpha}$ may not be obtained by the same affine transformation unless $C_{1}=1$.

The above analysis also shows that the translation transformation satisfying $T\circ f=f+C_2$ for \(C_2\in \mathbb{R}\) is a model optimality-preserving transformation.

In order to analyze more about the relationship between the affine transformations and the model functions, we derive the following theorem.
\begin{theorem}
\label{anotherview}
\begin{sloppypar}
Suppose that $Q_{\alpha}$ is a quadratic function, and $\mathcal{X}=\{\boldsymbol{x}_1,\cdots,\boldsymbol{x}_m\}\subset {\mathbb{R}}^{n}$ is a poised set, where \(n+1 \leqslant m<\frac{1}{2}(n+1)(n+2)\). Given constants $\nu_1,\nu_2\in{\mathbb{R}}$, it holds that  
\begin{equation}
\label{conclusion-Q_gamma}
\mathcal{M}^{\mathcal{X}}_{\nu_1 Q_\alpha+\nu_2}(f)=\nu_1 \mathcal{M}^{\mathcal{X}}_{Q_\alpha}(f)+(1-\nu_1) \mathcal{M}^{\mathcal{X}}_{0}(f).
\end{equation}  
\end{sloppypar}
\end{theorem}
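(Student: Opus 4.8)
The plan is to mimic the structure of the proof of Theorem \ref{C1,k+C2,k}: reduce everything to the linear system governed by the (common) inverse KKT matrix $\boldsymbol{H}$, exploit linearity of that system in the right-hand side, and then reconstruct the quadratic models from their parameter vectors. Concretely, write $Q_{\gamma}:=\mathcal{M}^{\mathcal{X}}_{\nu_1 Q_\alpha+\nu_2}(f)$, $Q_{\beta}:=\mathcal{M}^{\mathcal{X}}_{Q_\alpha}(f)$ and $\bar{Q}:=\mathcal{M}^{\mathcal{X}}_{0}(f)$, and set $D_\gamma:=Q_\gamma-(\nu_1 Q_\alpha+\nu_2)$, $D_\beta:=Q_\beta-Q_\alpha$, $D_0:=\bar{Q}-0=\bar{Q}$. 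Each $D$ solves a least Frobenius norm subproblem of the form $\min_D \|\nabla^2 D\|_F^2$ subject to $D(\boldsymbol{x})=f(\boldsymbol{x})-(\text{base})(\boldsymbol{x})$ on $\mathcal{X}$, so the parameter triples $({\boldsymbol{\lambda}}^\top,c,\g^\top)^\top$ are obtained by applying the same $\boldsymbol{H}$ (the interpolation points, hence $\boldsymbol{A}$ and $\boldsymbol{X}$, are unchanged) to the right-hand side vectors built from $f(\boldsymbol{x}_i)-\nu_1 Q_\alpha(\boldsymbol{x}_i)-\nu_2$, $f(\boldsymbol{x}_i)-Q_\alpha(\boldsymbol{x}_i)$, and $f(\boldsymbol{x}_i)$ respectively, padded with $n+1$ zeros.

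The key observation is the pointwise identity on $\mathcal{X}$:
\[
f(\boldsymbol{x}_i)-\nu_1 Q_\alpha(\boldsymbol{x}_i)-\nu_2
=\nu_1\bigl(f(\boldsymbol{x}_i)-Q_\alpha(\boldsymbol{x}_i)\bigr)+(1-\nu_1)f(\boldsymbol{x}_i)-\nu_2 .
\]
Applying $\boldsymbol{H}$ and using its linearity, the parameter vector of $D_\gamma$ equals $\nu_1$ times that of $D_\beta$ plus $(1-\nu_1)$ times that of $D_0$ minus $\nu_2\,\boldsymbol{H}(\mathbf{1}^\top,0,\dots,0)^\top$. Now $\boldsymbol{H}(\mathbf{1}^\top,0,\dots,0)^\top$ is precisely the parameter vector of the least Frobenius norm updating model of the constant function $\nu_2$ (scaled): by uniqueness and the interpolation conditions, the model reproducing the constant $\nu_2$ on $\mathcal{X}$ is just the constant $\nu_2$ itself (its Hessian is zero, it interpolates, and the induced $\g$ and $\lambda$ vanish because $\sum\lambda_j(\boldsymbol{x}_j-\boldsymbol{x}_0)=\boldsymbol0$ and the linear part is exactly $\nu_2$), so this term contributes the function $\nu_2$ back. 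Reassembling, $D_\gamma=\nu_1 D_\beta+(1-\nu_1)D_0+\nu_2$, i.e.
\[
Q_\gamma=(\nu_1 Q_\alpha+\nu_2)+\nu_1(Q_\beta-Q_\alpha)+(1-\nu_1)\bar{Q}+(-\nu_2+\nu_2)
=\nu_1 Q_\beta+(1-\nu_1)\bar{Q},
\]
where I must track the $\nu_2$ bookkeeping carefully: the $+\nu_2$ from the base of $D_\gamma$ and the $+\nu_2$ from the constant-model term, against the $-\nu_2$ appearing in the parameter decomposition — these are designed to cancel, leaving exactly \eqref{conclusion-Q_gamma}.

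The main obstacle I anticipate is not conceptual but notational: ensuring the constant-function bookkeeping is airtight, in particular verifying that $\boldsymbol{H}$ applied to $(\nu_2,\dots,\nu_2,0,\dots,0)^\top$ yields $\lambda=\boldsymbol0$, $\g=\boldsymbol0$, $c=\nu_2$ (equivalently, that the least Frobenius norm updating model of a constant based on the zero function is that constant). This follows because $(\boldsymbol0^\top,\nu_2,\boldsymbol0^\top)^\top$ satisfies the KKT system $\boldsymbol{A}\boldsymbol0+\boldsymbol{X}^\top(\nu_2,\boldsymbol0)^\top = (\nu_2,\dots,\nu_2)^\top$ and $\boldsymbol{X}\boldsymbol0=\boldsymbol0$, and $\boldsymbol{W}$ is invertible, so it is the unique solution. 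Once that lemma-sized fact is in hand, the rest is linear algebra identical in spirit to Theorem \ref{C1,k+C2,k}, and the theorem follows. I would also remark that \eqref{conclusion-Q_gamma} is a genuine complement to \eqref{equationoftransformationC1f+C2}: the former varies the base model affinely with $f$ fixed, the latter varies $f$ affinely with the base fixed, and setting $\nu_1=1$ (or noting $\mathcal{M}^{\mathcal{X}}_0(f)$ is the least Frobenius norm model) recovers consistency with Corollary \ref{full}.
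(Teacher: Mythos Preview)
Your approach is essentially identical to the paper's: introduce $Q_\gamma,Q_\beta,\bar Q$ (the paper calls the last one $Q_\phi$), pass to the difference functions, apply the common inverse KKT matrix $\boldsymbol{H}$ to the three right-hand side vectors, use linearity, and reassemble. Your verification that $\boldsymbol{H}(\nu_2,\dots,\nu_2,0,\dots,0)^\top=(\boldsymbol 0^\top,\nu_2,\boldsymbol 0^\top)^\top$ via the KKT system is exactly the ingredient the paper uses implicitly.

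The only slip is the sign in the $\nu_2$ bookkeeping you yourself flagged. From your own decomposition the parameter vector of $D_\gamma$ picks up $-\nu_2\,\boldsymbol{H}(\mathbf 1^\top,0,\dots,0)^\top$, so the correct function-level identity is $D_\gamma=\nu_1 D_\beta+(1-\nu_1)D_0-\nu_2$, not $+\nu_2$; then $Q_\gamma=(\nu_1 Q_\alpha+\nu_2)+D_\gamma=\nu_1 Q_\beta+(1-\nu_1)\bar Q$ with the two $\nu_2$'s cancelling. As written, your displayed chain $Q_\gamma=(\nu_1 Q_\alpha+\nu_2)+\nu_1(Q_\beta-Q_\alpha)+(1-\nu_1)\bar Q+(-\nu_2+\nu_2)$ actually simplifies to $\nu_1 Q_\beta+(1-\nu_1)\bar Q+\nu_2$, which is off by $\nu_2$. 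The paper sidesteps this by absorbing the constant into the left-hand side from the start, writing $({\boldsymbol\lambda}_\gamma^\top,c_\gamma+\nu_2,\g_\gamma^\top)^\top=\boldsymbol{H}(\q_1-\nu_1\q_2)$ with $\q_1,\q_2$ the $f$- and $Q_\alpha$-value vectors; that is the cleanest way to do the bookkeeping.
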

\begin{proof}
Let $Q_{\gamma}:=\mathcal{M}^{\mathcal{X}}_{\nu_1 Q_\alpha+\nu_2}(f)$, $Q_{\beta}:=\mathcal{M}^{\mathcal{X}}_{Q_\alpha}(f)$ and ${Q}_{\phi}:=\mathcal{M}^{\mathcal{X}}_{0}(f)$.  
Denote 
\begin{align*}
&Q_{\gamma}(\boldsymbol{x})-\nu_1 Q_{\alpha}(\boldsymbol{x})-\nu_2
={c}_{\gamma}+(\boldsymbol{x}-\boldsymbol{x}_0)^{\top}{\g}_{\gamma}  +\frac{1}{2} \sum_{j=1}^{m} ({{\boldsymbol{\lambda}}_{\gamma}})_{j}\left(\left(\boldsymbol{x}-\boldsymbol{x}_{0}\right)^{\top}\left(\boldsymbol{x}_{j}-\boldsymbol{x}_{0}\right)\right)^2,\\
&Q_{\beta}(\boldsymbol{x})-Q_{\alpha}(\boldsymbol{x})
={c}_{\beta}+(\boldsymbol{x}-\boldsymbol{x}_0)^{\top}{\g}_{\beta} +\frac{1}{2} \sum_{j=1}^{m} ({{\boldsymbol{\lambda}}_{\beta}})_{j}\left(\left(\boldsymbol{x}-\boldsymbol{x}_{0}\right)^{\top}\left(\boldsymbol{x}_{j}-\boldsymbol{x}_{0}\right)\right)^2,\\
&{Q}_{\phi}(\boldsymbol{x})={c}_{\phi}+(\boldsymbol{x}-\boldsymbol{x}_0)^{\top}{\g}_{\phi} +\frac{1}{2} \sum_{j=1}^{m} ({{\boldsymbol{\lambda}}_{\phi}})_{j}\left(\left(\boldsymbol{x}-\boldsymbol{x}_{0}\right)^{\top}\left(\boldsymbol{x}_{j}-\boldsymbol{x}_{0}\right)\right)^2. 
\end{align*}

We define $\q_1\in {\mathbb{R}}^{m+n+1}$ and $\q_2\in {\mathbb{R}}^{m+n+1}$ as 
\begin{align*}
\q_1&=\left(
f(\boldsymbol{x}_{1}),
\cdots,
f(\boldsymbol{x}_{m}),
0,\cdots,0
\right)^{\top},\\ 
\q_2&=\left(
Q_{\alpha}(\boldsymbol{x}_{1}),
\cdots,
Q_{\alpha}(\boldsymbol{x}_{m}),
0,
\cdots,
0\right)^{\top}.
\end{align*} 
According to the expression with the inverse matrix of the KKT matrix, it holds that 
\begin{align*}
({{\boldsymbol{\lambda}}}_{\gamma}^{\top},
{c}_{\gamma}+\nu_2,
{\g}_{\gamma}^{\top}
)^{\top}
&=\boldsymbol{H}
(\q_1-\nu_1 \q_2),\\ 
({{\boldsymbol{\lambda}}}_{\beta}^{\top},
{c}_{\beta},
{\g}_{\beta}^{\top})^{\top}&=\boldsymbol{H} (\q_1-\q_2),\\   
({{\boldsymbol{\lambda}}}_{\phi}^{\top},
{c}_{\phi},
{\g}_{\phi}^{\top})^{\top}&=\boldsymbol{H} \q_1.
\end{align*}  
\begin{sloppypar}
Thus we have
\(({{\boldsymbol{\lambda}}}_{\gamma}^{\top},{c}_{\gamma}+\nu_2,{\g}_{\gamma}^{\top})^{\top}=\nu_1({{\boldsymbol{\lambda}}}_{\beta}^{\top},{c}_{\beta},{\g}_{\beta}^{\top})^{\top}+(1-\nu_1)({{\boldsymbol{\lambda}}}_{\phi}^{\top},{c}_{\phi},{\g}_{\phi}^{\top})^{\top}.\) 
\end{sloppypar}
Therefore 
\(
Q_{\gamma}-\nu_1 Q_{\alpha}=\nu_1 (Q_{\beta}-Q_{\alpha}) +(1-\nu_1)Q_{\phi},
\)  
which implies (\ref{conclusion-Q_gamma}). The above shows that this lemma is true. 
\end{proof}

To analyze the model function corresponding to the affinely transformed objective function, we derive the following corollary based on Theorem \ref{anotherview}.
\begin{corollary}
\label{anotherview-xpc-2}
Suppose that $\mathcal{X}=\{\boldsymbol{x}_1,\cdots,\boldsymbol{x}_m\}\subset {\mathbb{R}}^{n}$ is a poised set, and $\hat{Q}_{\alpha}$ is a quadratic interpolation model of the function $f$ on $\mathcal{X}\backslash\{\boldsymbol{x}_t\}$, where \(n+1 \leqslant m<\frac{1}{2}(n+1)(n+2)\). Then ${C_{1}}\hat{Q}_{\alpha}+{C_{2}}$ is a quadratic interpolation model of $C_1f+C_2$ on $\mathcal{X}\backslash\{\boldsymbol{x}_t\}$ for any \(C_1,C_2\in\mathbb{R}\).
Moreover, it holds that 
\[
\mathcal{M}^{\mathcal{X}}_{C_1\hat{Q}_\alpha+C_2}(C_1f+C_2)-\mathcal{M}^{\mathcal{X}}_{0}(C_1f+C_2)=C_1\bigg(\mathcal{M}^{\mathcal{X}}_{\hat{Q}_{\alpha}}(C_1f+C_2)-\mathcal{M}^{\mathcal{X}}_{0}(C_1f+C_2)\bigg),
\] 
for any \(C_1,C_2\in\mathbb{R}\), where \(\mathcal{M}^{\mathcal{X}}_{0}(C_1f+C_2)\) is exactly the least Frobenius norm model of \(C_1f+C_2\). 
\end{corollary}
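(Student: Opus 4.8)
The plan is to deal with the first assertion by a one-line interpolation check and then obtain the displayed identity as an immediate specialization of Theorem \ref{anotherview}, so the whole corollary reduces to choosing that theorem's parameters correctly.

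First I would verify that $C_1\hat{Q}_\alpha+C_2$ is a quadratic interpolation model of $C_1f+C_2$ on $\mathcal{X}\backslash\{\boldsymbol{x}_t\}$. Indeed, $C_1\hat{Q}_\alpha+C_2$ is patently a quadratic function, and for every $\boldsymbol{x}\in\mathcal{X}\backslash\{\boldsymbol{x}_t\}$ we have $(C_1\hat{Q}_\alpha+C_2)(\boldsymbol{x})=C_1\hat{Q}_\alpha(\boldsymbol{x})+C_2=C_1f(\boldsymbol{x})+C_2=(C_1f+C_2)(\boldsymbol{x})$, the middle equality being the hypothesis that $\hat{Q}_\alpha$ interpolates $f$ on $\mathcal{X}\backslash\{\boldsymbol{x}_t\}$.

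For the main identity I would apply Theorem \ref{anotherview} with its quadratic function $Q_\alpha$ taken to be $\hat{Q}_\alpha$, its interpolated function $f$ taken to be $C_1f+C_2$, and its constants $(\nu_1,\nu_2)$ taken to be $(C_1,C_2)$; the hypotheses of Theorem \ref{anotherview} (poisedness of $\mathcal{X}$ and $n+1\le m<\tfrac12(n+1)(n+2)$) are exactly those assumed here, and that theorem allows an arbitrary base quadratic and an arbitrary interpolated function, so the substitution is legitimate. This yields
\[
\mathcal{M}^{\mathcal{X}}_{C_1\hat{Q}_\alpha+C_2}(C_1f+C_2)=C_1\,\mathcal{M}^{\mathcal{X}}_{\hat{Q}_\alpha}(C_1f+C_2)+(1-C_1)\,\mathcal{M}^{\mathcal{X}}_{0}(C_1f+C_2).
\]
Subtracting $\mathcal{M}^{\mathcal{X}}_{0}(C_1f+C_2)$ from both sides and factoring out $C_1$ on the right gives precisely the asserted equation. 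Finally, that $\mathcal{M}^{\mathcal{X}}_{0}(C_1f+C_2)$ is the least Frobenius norm model of $C_1f+C_2$ is immediate from the definition of $\mathcal{M}^{\mathcal{X}}_{0}$, whose base quadratic is the zero function.

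I expect no real obstacle here: the only points requiring a moment's care are confirming that the roles assigned in Theorem \ref{anotherview} are admissible (in particular that $C_1f+C_2$ may stand in for the black-box function), and noting that the interpolation hypothesis on $\hat{Q}_\alpha$ is used only for the first assertion and for the intended reading in which $\hat{Q}_\alpha$ plays the part of the previous model $Q_{k-1}$ — it is not needed for the algebraic identity itself.
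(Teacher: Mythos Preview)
Your proof is correct and follows essentially the same approach as the paper, which simply states that the corollary is a direct consequence of Theorem \ref{anotherview} with $\nu_1=C_1$, $\nu_2=C_2$. Your write-up is more detailed (including the explicit interpolation check for the first assertion and the subtraction step), and your remark that the interpolation hypothesis on $\hat{Q}_\alpha$ is not actually needed for the algebraic identity is a correct and useful observation.
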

\begin{proof}
This is a direct consequence of Theorem \ref{anotherview}, with $\nu_1={C_1}, \nu_2={C_2}$.
\end{proof}
\begin{remark} 
Corollary \ref{anotherview-xpc-2} discusses the relationship between obtaining the least Frobenius norm updating model based on a model of the original objective function \(f\) and obtaining that based on a model of the transformed objective function \(C_1 f+C_2\). 
\end{remark}

\section{Fully linear model and convergence analysis}
\label{Convergence}

To obtain the convergence result based on the results of derivative-free trust-region algorithms, the convergence analysis in the section is for the standard provable algorithmic framework, Algorithm 10.1 in the book of Conn, Scheinberg and Vicente \cite{conn2009introduction}, but using our least Frobenius norm updating quadratc model for minimizing transformed objective functions. The only differences of the provable algorithmic framework are about the transformed output function value and the use of our model. In the other word, the function value used by the algorithm is the transformed one that strictly corresponds to the iteration step where the point being newly added. Considering that our model can provide fully linear models, we study the global convergence property for first-order critical points in detail. To catch how well the interpolation model performs given the affine transformations, we first illustrate the fully linear error constants of the least Frobenius norm updating quadratic model when the objective function is affinely transformed.

\subsection{Fully linear error constants}
\label{Fully linear error constants}

We present the following assumptions and theorem about the interpolation error between the under-determined quadratic interpolation model and the objective function with the affine transformation. 
\begin{assumption}
\label{Assumption_on_f_X_lip}
\begin{sloppypar}
We assume that $\mathcal{X}=\{\boldsymbol{x}_{1}, \cdots, \boldsymbol{x}_{m}\} \subset {\mathbb{R}}^{n}$ is a set of sample/interpolation points poised in the linear interpolation or regression sense, contained in the $\ell_2$ ball $\mathcal{B}_{\Delta}(\boldsymbol{x}_c)$ of radius $\Delta$, where \(\boldsymbol{x}_c\in\mathcal{X}\) and $n+1\leqslant \vert \mathcal{X} \vert=m < \frac{1}{2}(n+1)(n+2)$. 
\end{sloppypar}
\end{assumption}

Besides, we denote that
$\hat{\boldsymbol{{L}}}=\frac{1}{\Delta} \boldsymbol{L}=\frac{1}{\Delta}(\boldsymbol{x}_{1}-\boldsymbol{x}_{c}, \cdots, \boldsymbol{x}_{c-1}-\boldsymbol{x}_{c}, \boldsymbol{x}_{c+1}-\boldsymbol{x}_{c},\cdots, \boldsymbol{x}_{m}-\boldsymbol{x}_{c})^{\top}\in {\mathbb{R}}^{(m-1)\times n}$   and $\hat{\boldsymbol{L}}^{\dagger}=(\hat{\boldsymbol{L}}^{\top} \hat{\boldsymbol{L}})^{-1} \hat{\boldsymbol{L}}^{\top}$.  
In addition, we give the following assumption and theorem.
\begin{assumption}\label{Assumption_4_model}
We assume that \(Q_{\alpha}\) is a quadratic function, and the quadratic model function $Q_{\beta}:=\mathcal{M}^{\mathcal{X}}_{Q_{\alpha}}(f)$ is a fully linear model \cite{conn2009introduction,audet2017derivative} of the function $f$ with the constants $\kappa_g$ and $\kappa_f$, i.e., 
\begin{align*}
 \left\Vert \nabla Q_{\beta}(\boldsymbol{x})-\nabla f(\boldsymbol{x})\right\Vert_{2}  &\leqslant \kappa_{g} \Delta, \ \forall \    \boldsymbol{x} \in \mathcal{B}_{\Delta}(\boldsymbol{x}_c), \\ 
\left\vert Q_{\beta}(\boldsymbol{x})-f(\boldsymbol{x})\right\vert  &\leqslant \kappa_{f}\Delta^{2}, \ \forall \    \boldsymbol{x} \in \mathcal{B}_{\Delta}(\boldsymbol{x}_c).
\end{align*} 
 \end{assumption}
 \begin{theorem}
Suppose that Assumptions \ref{Assumption_on_f_X_lip} and \ref{Assumption_4_model} hold. Then the quadratic model function $\hat{Q}_{\beta}:=\mathcal{M}^{\mathcal{X}}_{Q_{\alpha}}(C_1f+C_2)$ is a fully linear model of $C_{1} f+C_{2}$, with constants  
\begin{align*}
    \hat{\kappa}_{g}&=\vert C_{1}\vert \kappa_g+\vert C_{1}-1\vert \left(\frac{5\sqrt{m-1}}{2}\left\Vert \hat{\boldsymbol{L}}^{\dagger}\right\Vert_{2}\left(\mu_{\alpha}+\left\Vert \nabla^2 \tilde{Q}\right\Vert_{2}\right)\right),\\
    \hat{\kappa}_{f}&=\vert C_{1}\vert  \kappa_f+\vert C_{1}-1\vert  \left(\frac{5\sqrt{m-1}}{2}\left\Vert \hat{\boldsymbol{L}}^{\dagger}\right\Vert_{2}+\frac{1}{2}\right)\left(\mu_{\alpha}+\left\Vert \nabla^2 \tilde{Q}\right\Vert_{2}\right),
\end{align*} 
for any \(C_1,C_2\in\mathbb{R}\), where $\tilde{Q}:=\mathcal{M}^{\mathcal{X}}_{0}(Q_\alpha)$, and $\mu_{\alpha}$ is the Lipschitz constant of the linear function $\nabla Q_{\alpha}$. In the other word, it holds that  
\begin{align*}
 \left\Vert \nabla \hat{Q}_{\beta}(\boldsymbol{x})-\nabla (C_1 f(\boldsymbol{x})+C_2)\right\Vert_{2}  &\leqslant \hat{\kappa}_{g} \Delta,  \ \forall \    \boldsymbol{x} \in \mathcal{B}_{\Delta}(\boldsymbol{x}_c),\\ 
\left\vert \hat{Q}_{\beta}(\boldsymbol{x})-(C_1 f(\boldsymbol{x})+C_2)\right\vert  &\leqslant \hat{\kappa}_{f}\Delta^{2}, \ \forall \    \boldsymbol{x} \in \mathcal{B}_{\Delta}(\boldsymbol{x}_c).
\end{align*} 
\end{theorem}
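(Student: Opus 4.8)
The plan is to leverage Theorem~\ref{C1,k+C2,k}, which already provides the exact decomposition
\[
\hat{Q}_{\beta}=\mathcal{M}^{\mathcal{X}}_{Q_{\alpha}}(C_1f+C_2)=(C_1 Q_{\beta}+C_2)+(C_1-1)(\tilde{Q}-Q_{\alpha}),
\]
where $Q_{\beta}=\mathcal{M}^{\mathcal{X}}_{Q_{\alpha}}(f)$ and $\tilde{Q}=\mathcal{M}^{\mathcal{X}}_{0}(Q_\alpha)$. From this identity the bounds on $\hat{Q}_{\beta}-(C_1f+C_2)$ reduce, by the triangle inequality, to two pieces: the term $C_1(Q_{\beta}-f)$, which is controlled by Assumption~\ref{Assumption_4_model} and contributes $\vert C_1\vert\kappa_g$ and $\vert C_1\vert\kappa_f$; and the ``correction'' term $(C_1-1)(\tilde{Q}-Q_\alpha)$, whose gradient and function-value sizes on $\mathcal{B}_\Delta(\boldsymbol{x}_c)$ must be estimated. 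So the crux of the proof is to bound $\Vert \nabla(\tilde{Q}-Q_\alpha)(\boldsymbol{x})\Vert_2$ and $\vert(\tilde{Q}-Q_\alpha)(\boldsymbol{x})\vert$ for $\boldsymbol{x}\in\mathcal{B}_\Delta(\boldsymbol{x}_c)$ in terms of $\mu_\alpha$ (the Lipschitz constant of $\nabla Q_\alpha$, i.e.\ $\Vert\nabla^2 Q_\alpha\Vert_2$) and $\Vert\nabla^2\tilde Q\Vert_2$.

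First I would observe that $\tilde{Q}-Q_\alpha$ is a quadratic function that vanishes on the whole interpolation set $\mathcal{X}$ (since $\tilde{Q}$ interpolates $Q_\alpha$ there). Hence $P:=\tilde{Q}-Q_\alpha$ is a quadratic with $P(\boldsymbol{x}_i)=0$ for $i=1,\dots,m$, and its Hessian satisfies $\Vert\nabla^2 P\Vert_2\leqslant\Vert\nabla^2\tilde Q\Vert_2+\Vert\nabla^2 Q_\alpha\Vert_2=\Vert\nabla^2\tilde Q\Vert_2+\mu_\alpha$. The standard fully-linear error machinery for a quadratic that interpolates to zero on a linearly poised set then applies: writing $P(\boldsymbol{x})=P(\boldsymbol{x}_c)+\nabla P(\boldsymbol{x}_c)^\top(\boldsymbol{x}-\boldsymbol{x}_c)+\tfrac12(\boldsymbol{x}-\boldsymbol{x}_c)^\top\nabla^2 P(\boldsymbol{x}-\boldsymbol{x}_c)$ and using the $m-1$ interpolation conditions $P(\boldsymbol{x}_i)=0$ together with $P(\boldsymbol{x}_c)=0$, one solves for $\nabla P(\boldsymbol{x}_c)$ via $\hat{\boldsymbol{L}}^{\dagger}$ and obtains the familiar bound $\Vert\nabla P(\boldsymbol{x}_c)\Vert_2\leqslant \tfrac{1}{2}\sqrt{m-1}\,\Vert\hat{\boldsymbol{L}}^{\dagger}\Vert_2\,\Vert\nabla^2 P\Vert_2\,\Delta$; propagating from $\boldsymbol{x}_c$ to a general $\boldsymbol{x}\in\mathcal{B}_\Delta(\boldsymbol{x}_c)$ adds another $\Vert\nabla^2 P\Vert_2\Delta$, and a Taylor/Cauchy--Schwarz estimate for the function value adds the $\tfrac12$ term. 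Collecting constants reproduces the factor $\tfrac{5\sqrt{m-1}}{2}\Vert\hat{\boldsymbol{L}}^{\dagger}\Vert_2$ for the gradient and $\bigl(\tfrac{5\sqrt{m-1}}{2}\Vert\hat{\boldsymbol{L}}^{\dagger}\Vert_2+\tfrac12\bigr)$ for the value, each multiplied by $\mu_\alpha+\Vert\nabla^2\tilde Q\Vert_2$.

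Then I would assemble the final estimate: for $\boldsymbol{x}\in\mathcal{B}_\Delta(\boldsymbol{x}_c)$,
\[
\Vert\nabla\hat Q_\beta(\boldsymbol{x})-\nabla(C_1f+C_2)(\boldsymbol{x})\Vert_2
\leqslant \vert C_1\vert\,\Vert\nabla Q_\beta(\boldsymbol{x})-\nabla f(\boldsymbol{x})\Vert_2
+\vert C_1-1\vert\,\Vert\nabla P(\boldsymbol{x})\Vert_2
\leqslant \hat\kappa_g\Delta,
\]
and similarly for the function values, using $\vert\hat Q_\beta-(C_1f+C_2)\vert\leqslant\vert C_1\vert\,\vert Q_\beta-f\vert+\vert C_1-1\vert\,\vert P\vert$. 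The constant $C_2$ plays no role since it cancels in every difference. The main obstacle I anticipate is not conceptual but bookkeeping: getting the precise numerical constant $\tfrac{5}{2}$ right requires carefully combining the bound on $\Vert\nabla P(\boldsymbol{x}_c)\Vert_2$ (giving $\tfrac12\sqrt{m-1}\Vert\hat{\boldsymbol{L}}^{\dagger}\Vert_2$), the transfer from $\boldsymbol{x}_c$ to $\boldsymbol{x}$ (an extra $\Delta$ worth of Hessian), and, in the regression/least-squares-poised case, an additional factor of $2$ in the standard Conn--Scheinberg--Vicente linear-error estimate — so one must track whether the $\sqrt{m-1}$ norm conversion and the pseudoinverse bound are being applied consistently with the conventions in their Chapters~2--3. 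Once those constants are pinned down, the rest is a routine triangle-inequality argument driven entirely by Theorem~\ref{C1,k+C2,k} and Assumption~\ref{Assumption_4_model}.
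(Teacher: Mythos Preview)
Your proposal is correct and follows essentially the same route as the paper: decompose via Theorem~\ref{C1,k+C2,k}, bound $C_1(Q_\beta-f)$ by Assumption~\ref{Assumption_4_model}, and bound $(C_1-1)(\tilde Q-Q_\alpha)$ separately, then combine by the triangle inequality. The only difference is that where you sketch the derivation of the bound on $\tilde Q-Q_\alpha$ from scratch, the paper simply invokes Theorem~5.4 of Conn--Scheinberg--Vicente (applied with $Q_\alpha$ in the role of the objective and $\tilde Q$ as its minimum-Frobenius-norm model), which yields exactly the constants $\tfrac{5\sqrt{m-1}}{2}\Vert\hat{\boldsymbol L}^\dagger\Vert_2$ and $\tfrac{5\sqrt{m-1}}{2}\Vert\hat{\boldsymbol L}^\dagger\Vert_2+\tfrac12$ you are aiming for.
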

 
 \begin{proof}
 According to Theorem 5.4 in the book of Conn, Scheinberg and Vicente \cite{conn2009introduction}, we have 
 \begin{align*}
&\left\Vert \nabla Q_{\alpha}(\boldsymbol{x})-\nabla \tilde{Q}(\boldsymbol{x})\right\Vert_{2} \leqslant \frac{5 \sqrt{m-1}}{2}\left\Vert \hat{\boldsymbol{L}}^{\dagger}\right\Vert_{2}\left(\mu_{\alpha}+\left\Vert \nabla^2 \tilde{Q}\right\Vert_{2}\right) \Delta, \ \forall  \   \boldsymbol{x} \in \mathcal{B}_{\Delta}(\boldsymbol{x}_c),\\
&\left\vert Q_{\alpha}(\boldsymbol{x})-\tilde{Q}(\boldsymbol{x})\right\vert \leqslant \left(\frac{5 \sqrt{m-1}}{2}\left\Vert \hat{\boldsymbol{L}}^{\dagger}\right\Vert_{2}+\frac{1}{2}\right)\left(\mu_{\alpha}+\left\Vert \nabla^2 \tilde{Q}\right\Vert_{2}\right) \Delta^{2}, \ \forall  \   \boldsymbol{x} \in \mathcal{B}_{\Delta}(\boldsymbol{x}_c).
\end{align*} 
Therefore, the theorem holds combining with Theorem \ref{C1,k+C2,k} and Assumption \ref{Assumption_4_model}. 
\end{proof}

\subsection{Global convergence property for first-order critical points} 

We now turn to discuss convergence properties of our method. We assume that the fully linear error constants have uniform bounds. To avoid confusion, it should be noted here that our convergence analysis is for general positive monotonic transformation, not only for the affine transformations discussed in Section \ref{Fully linear error constants}. For the purpose of giving the convergence for first-order critical points of the provable Algorithm 10.1 in the book of Conn, Scheinberg and Vicente \cite{conn2009introduction}, but using our least Frobenius norm updating quadratc model for minimizing transformed objective functions, we assume that the transformed function $f_k$ and its gradient are  Lipschitz continuous in the corresponding domain. Details are in the following assumption.
\begin{assumption}\label{Assumption_conv_1}
\begin{sloppypar}
Suppose that the initial point \(\boldsymbol{x}_0\in {\mathbb{R}}^{n}\) and the upper bound of trust-region radius, i.e., \(\Delta_{\text{max}}\), are given. Assume that \(f\) and all \(f_k\) for \(k\in \mathbb{N}^+\) are continuously differentiable with Lipschitz continuous gradient in an open domain containing the set \(\mathcal{L}_{\text{enl}}(\boldsymbol{x}_0),\)  in which 
\[
\mathcal{L}_{\text{enl}}(\boldsymbol{x}_0)
=\underset{\boldsymbol{x} \in \mathcal{L}(\boldsymbol{x}_0)}{\cup} \mathcal{B}_{\Delta_{\text{max}}}(\boldsymbol{x}),
\] 
and 
{\(
\mathcal{L}(\boldsymbol{x}_0)=\{\boldsymbol{x} \in \mathbb{R}^{n}: f(\boldsymbol{x}) \leqslant f(\boldsymbol{x}_0)\}. 
\)}  
\end{sloppypar}
\end{assumption}

In addition, considering the minimization, we assume that each transformed function $f_k$ is bounded from below. 
\begin{assumption}\label{Assumption_conv_2} 
Suppose that \(f\) and all \(f_k\) for \(k\in \mathbb{N}^+\) are bounded from below on \(\mathcal{L}(\boldsymbol{x}_0)\), that is, there exists a constant \(\kappa_{*}\) such that, for all \(\boldsymbol{x} \in \mathcal{L}(\boldsymbol{x}_0)\), \(f(\boldsymbol{x}) \geqslant \kappa_{*}\) and \(f_k(\boldsymbol{x}) \geqslant \kappa_{*}\), \(\forall \ k \in \mathbb{N}^+\).
\end{assumption}

For simplicity, we let the Hessian of the model function, denoted as $\nabla^{2} Q_{k}$, be uniformly bounded. The following assumption shows the details.
\begin{assumption}\label{Assumption_conv_3}
There exists a constant \(\kappa_{\text{bhm}}>0\) such that, for all iterations generated by the algorithm,  
$
\Vert \nabla^2 Q_{k}\Vert_{2} \leqslant \kappa_{\text{bhm}}.
$
\end{assumption}

Following the same proof process as the convergence  analysis of Algorithm 10.1 in Chapter 10 in the book of Conn, Scheinberg and Vicente \cite{conn2009introduction}, but with the transformed function $f_{k}$ instead of the original function $f$, we can derive directly the following theorem about the convergence of the provable counterpart for minimizing the transformed objective functions. 
\begin{theorem}\label{con-new} 
Let Assumption \ref{Assumption_conv_1}, Assumption \ref{Assumption_conv_2} and Assumption \ref{Assumption_conv_3} hold. Suppose that the transformation $T_k$ is a positive monotonic transformation for each $k\in\mathbb{N}^+$, and the fully linear error constants and the gradients' Lipschitz constants of the models given by the algorithm have uniform bounds. Then
\begin{equation}
\label{convergencefk}
\lim_{k \rightarrow\infty}\ \nabla f_k(\boldsymbol{x}^{(k)})=\boldsymbol{0}
\end{equation}
holds, where $f_k(\boldsymbol{x})=T_k(f(\boldsymbol{x}))$. Moreover, it holds that 
\begin{equation}
\label{gradres}
\lim_{k \rightarrow \infty}\ \nabla f(\boldsymbol{x}^{(k)})=\boldsymbol{0}.
\end{equation}
\end{theorem}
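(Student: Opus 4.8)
The plan is to import the first-order global convergence theory for model-based derivative-free trust-region methods — concretely, the analysis of Algorithm 10.1 in the book of Conn, Scheinberg and Vicente \cite{conn2009introduction} — and to run it at iteration $k$ with the transformed objective $f_k$ playing the role of the objective, using the positive monotonicity of $T_k$ as the glue that reconciles this idealized run with the genuine iterates produced by Algorithm \ref{algo-TR}.

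First I would check that every structural hypothesis of that theory holds, with constants independent of $k$. By Assumption \ref{Assumption_conv_1} each $f_k$ is continuously differentiable with Lipschitz gradient on $\mathcal{L}_{\text{enl}}(\boldsymbol{x}_0)$; by Assumption \ref{Assumption_conv_2} each $f_k$ is bounded below there; by Assumption \ref{Assumption_conv_3} the model Hessians $\nabla^2 Q_k$ are uniformly bounded. The least Frobenius norm updating model $Q_k$ interpolates $f_k$ on a $\Lambda$-poised set, hence is a fully linear model of $f_k$ as in Section \ref{Fully linear error constants}, and the standing hypothesis that the fully linear error constants and the models' gradient Lipschitz constants admit uniform bounds supplies the $k$-independent constants $\kappa_g,\kappa_f,\ldots$ that the estimates in \cite{conn2009introduction} require.

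The new ingredient compared with the untransformed case is that $T_{k+1}$ is order-preserving, so the acceptance test of Algorithm \ref{algo-TR}, $f_{k+1}(\boldsymbol{x}^{(k)}+\dd_k)<f_{k+1}(\boldsymbol{x}^{(k)})$, is equivalent to $f(\boldsymbol{x}^{(k)}+\dd_k)<f(\boldsymbol{x}^{(k)})$; consequently $\{f(\boldsymbol{x}^{(k)})\}$ is nonincreasing, every iterate stays in $\mathcal{L}(\boldsymbol{x}_0)$ so the assumptions above are actually in force along the whole run, and the telescoping sum of true decreases is finite. Combining this with the Cauchy-decrease lower bound for the trust-region step on $Q_k$ and the fully-linearity of $Q_k$ for $f_k$ reproduces the standard chain of the proof in \cite{conn2009introduction}: only finitely many ``successful'' iterations with a fixed amount of predicted decrease can occur before the criticality step forces $\Delta_k\to 0$ along a subsequence, whence $\liminf_k\|\nabla f_k(\boldsymbol{x}^{(k)})\|=0$; the refinement argument — a subsequence along which $\|\nabla f_k(\boldsymbol{x}^{(k)})\|$ stays bounded away from $0$ is impossible, because between two such iterations the trust region must contract while the model remains fully linear — then upgrades this to the full limit, establishing (\ref{convergencefk}). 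To pass to (\ref{gradres}) I would differentiate the composition, $\nabla f_k(\boldsymbol{x})=T_k'(f(\boldsymbol{x}))\,\nabla f(\boldsymbol{x})$ wherever $f$ is regular, and invoke the uniform bounds to pin the scalars $T_k'(f(\boldsymbol{x}^{(k)}))$ into a fixed compact subinterval of $(0,\infty)$ along the iterate sequence, so that (\ref{convergencefk}) and (\ref{gradres}) are equivalent.

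The main obstacle — and the reason this is only a primary analysis, and the general unconstrained version is flagged as an open (``moving-target'') problem — is the mismatch between the model used at step $k$, which fits $f_k$, and the value revealed at the next query, which is $f_{k+1}(\boldsymbol{x}^{(k)}+\dd_k)$ rather than $f_k(\boldsymbol{x}^{(k)}+\dd_k)$, so the predicted-versus-actual reduction comparison that underlies the usual ratio test is not literally available. Positive monotonicity is exactly what lets one replace that comparison by the simple descent test on $f_{k+1}$ (equivalently on $f$) and still carry the monotone-decrease and criticality arguments through; making this substitution rigorous, and verifying that the constants in every estimate can indeed be chosen uniformly in $k$, is the delicate step, whereas the individual inequalities are essentially routine transcriptions of those in \cite{conn2009introduction} with $f_k$ in place of $f$.
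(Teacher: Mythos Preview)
Your proposal takes essentially the same approach as the paper: import the first-order convergence analysis of Algorithm~10.1 from Conn, Scheinberg and Vicente with $f_k$ in place of $f$, observe that the uniform bounds on the fully linear constants and model-gradient Lipschitz constants furnish the $k$-independent constants needed for Lemmas~10.5--10.6 there, and then pass from (\ref{convergencefk}) to (\ref{gradres}) via the chain rule $\nabla f_k = T_k'(f)\,\nabla f$ together with a lower bound on the derivative factor. You are in fact more explicit than the paper on two points --- the use of positive monotonicity to make the acceptance test equivalent to descent in $f$ (keeping the iterates in $\mathcal{L}(\boldsymbol{x}_0)$), and the $f_k$/$f_{k+1}$ mismatch in the ratio test --- which the paper handles simply by declaring the proof ``the same as'' the untransformed analysis and citing the book.
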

\begin{proof}
The proof process of (\ref{convergencefk}) is the same as the convergence  analysis for trust-region methods based on derivative-free models in Section 10.4 in the book of Conn, Scheinberg and Vicente \cite{conn2009introduction}. Notice that the assumption about the uniform upper bounds of the fully linear error constants and the gradients' Lipschitz constants of the models guarantees the results corresponding to Lemma 10.5 and Lemma 10.6 in the book of Conn, Scheinberg and Vicente \cite{conn2009introduction}. Besides, given the positive monotonic transformation, we know that there exists $\varepsilon>0$, such that
\[
\underset{k\rightarrow\infty}{\lim\inf}\ \frac{\mathrm{d}f_k}{\mathrm{d}f}>\varepsilon,  
 \]
and then
\[
\nabla f_{k}(\boldsymbol{x}^{(k)})=\frac{\mathrm{d} f_{k}}{\mathrm{d} f}  \nabla f(\boldsymbol{x}^{(k)}).
\]
Therefore, (\ref{gradres}) holds. The proof is completed.
\end{proof}

{The transformations in Theorem \ref{con-new} are required to be positive monotonic, and they cover the stochastic affine transformations with positive multiplication coefficients, such as the stochastic affine transformations corresponding to (\ref{transformation}).}

Considering that Powell's software NEWUOA is a classical and efficient model-based algorithm using the least Frobenius norm updating quadratic model, Section \ref{Numerical Results} will show the results of an implementation (NEWUOA-Trans) of our method based on NEWUOA. What we need to clarify is that the convergence analysis above is based on the provable framework, but not for NEWUOA or NEWUOA-Trans, since NEWUOA is difficult to analyze because of its practical modifications and flowchart-like statements. NEWUOA-Trans is implemented for showing advantages of our model when solving problems numerically. 

NEWUOA-Trans shares the same framework with NEWUOA but updates the under-determined model by (\ref{lambda-c-g-obtain}), which is a straightforward extension of Powell's least Frobenius norm updating. In NEWUOA and NEWUOA-Trans, it is not difficult to see that the model-improvement step first attempts to replace the interpolation point that is too far from the current $\boldsymbol{x}_{\text{opt}}$ and the other interpolation points (e.g., to replace the interpolation point outside of the trust region with radius $2\Delta_{k}$ centered at $\boldsymbol{x}_{\text{opt}}$). Obviously, this can be finished in no more than $m$ steps. When all of the points in the interpolation set are close enough to each other, NEWUOA-Trans will check the poisedness of the interpolation set. The model-improvement step will find new interpolation points by maximizing the absolute values of the corresponding Lagrange polynomials or the denominator of the updating formula of the inverse matrix of the KKT matrix. The process above will not be influenced by the transformations. If the interpolation set is poised, then there is nothing about the model for NEWUOA and NEWUOA-Trans to improve. In the case where the interpolation set is not poised, one point in the set will be replaced at one step. Recalling Theorem 6.3 in the book of Conn, Scheinberg and Vicente \cite{conn2009introduction}, it holds that a poised interpolation set will be obtained in the interpolation region, and hence a fully linear model will be obtained. The model-improvement step is guaranteed to produce a fully linear model in a finite number of iterations. Therefore, we can claim that the interpolation updating of NEWUOA and NEWUOA-Trans can guarantee that the fully linear models could be constructed in a finite uniformly bounded number of steps.

\section{Numerical results}
\label{Numerical Results}

The analysis above shows that the affine transformation $T$, satisfying $T\circ f=C_1 f+C_2$, and $C_1>0$, is not a model optimality-preserving transformation, if $C_1\ne 1$. However, the affine transformation is extremely fundamental, important, and has practical applications. For example, the affine transformation exactly refers to the additive and multiplicative noise adding mechanisms with differentially private mechanisms in private black-box optimization. In fact, we have theoretically analyzed the analytic expressions and interpolation errors of the least Frobenius norm updating model for affinely transformed objective function in the previous section, and we will further observe the performance of our method through numerical experiments in this section.

\begin{sloppypar}
A derivative-free algorithm based on Powell's software NEWUOA \cite{powell2006newuoa}, for solving derivative-free optimization problems with transformed objective functions, is implemented and named NEWUOA-Trans\footnote{``-Trans'' denotes that it is designed for solving problems with transformed objective functions.}. The test codes (Fortran and MATLAB versions) can be downloaded from the online repository\footnote{\href{https://pengchengxielsec.github.io}{https://pengchengxielsec.github.io}}. The under-determined model used in NEWUOA-Trans is updated by (\ref{lambda-c-g-obtain}). This part presents the numerical results of solving derivative-free optimization problems with some kinds of transformed objective functions using NEWUOA-Trans. The numerical results demonstrate the main features and advantages of NEWUOA-Trans. NEWUOA-Trans is a robust and efficient solver for minimizing transformed objective functions and it tries to make the least change based on the successful solver NEWUOA. The modification of the codes of NEWUOA-Trans mainly occurs in the part of updating the second derivative parameters and the gradient vector of the model, although we have to admit that fixing related parts of the transformation issue is not an easy work for such interlocking codes. {The other parts in NEWUOA-Trans follow the same way in NEWUOA.}   
\end{sloppypar}

\subsection{Compared algorithms and the transformation}
Before proceeding further, we remark on the compared numerical algorithms and the transformation applied in the test problems. We apply algorithms in Table \ref{Compared algorithms} to solve the derivative-free optimization problems with transformed objective functions. The objective functions of all problems have been transformed by (\ref{transformation}). Besides, we also test NEWUOA-N, and the problems solved by NEWUOA-N have no noise, i.e., the objective functions are not transformed. The notation ``-N'' here denotes no noise, and NEWUOA-N can be regarded as the baseline in some senses, which acts as a standard answer when the noise does not exist. Furthermore, the comparison between NEWUOA-Trans and NEWUOA-N can show whether NEWUOA-Trans can reduce the transformation's influence on the objective function. Details are in Table \ref{Compared algorithms}. In NEWUOA-trans, NEWUOA-N and NEWUOA, $\rho_{\text{beg}} = 10^{-1}$, $\rho_{\text{end}} = 10^{-8}$, $m=2n+1$, and more details of the framework of NEWUOA can be seen in Fig. 1 (the outline) of Powell's work \cite{powell2006newuoa}. 

\begin{table}[htbp]  
  \centering   
    \caption{Compared algorithms\label{Compared algorithms}} 
  \begin{tabular}{lll}  
\toprule
    Algorithms& Description &  Problems
\\
\midrule
    NEWUOA-Trans& Our model &  Transformed objective\\
    NEWUOA& Powell's model \cite{powell2006newuoa} &  Transformed objective \\
     NEWUOA-N& Powell's model&  Original objective (without transformation)
\\
\botrule
    \end{tabular}
\end{table}

In our numerical experiments in Section \ref{Transformations attack NEWUOA: a simple example} and Section \ref{Performance profile}, at the $k$-th step, the output objective function value $f(\boldsymbol{x})$ at the queried points will be transformed to be 
\begin{equation}
\label{transformation}
f_k(\boldsymbol{x})=(\gamma_k+1)f(\boldsymbol{x})+C\eta_k, \ \text{for $\boldsymbol{x}$ among the $k$-th batch of queried points}, 
\end{equation}
where $\eta_k \sim \operatorname{Lap}(b_k),\ b_k>0$, and $\gamma_k \sim \operatorname{U}(-u_k, u_k)$, $0<u_k <1$. 
The probability density function of $\operatorname{Lap}(b_k)$ is 
\( 
p(x)=\frac{1}{2b_k} e^{-\frac{\vert x \vert}{b_k}}.\) 
Besides, $\operatorname{U}$ denotes the uniform distribution, and the corresponding probability density function is
\[
p(x)=\left\{
\begin{aligned}
&\frac{1}{2u_k},\ \ \text{if $x\in [-u_k, u_k]$,}\\
&\ \ \ 0,\ \ \ \text{otherwise}.
\end{aligned}
\right.
\]

\subsection{Transformations attack NEWUOA: a simple example}
\label{Transformations attack NEWUOA: a simple example}
The following simple example shows that the transformations (even the affine ones) in the objective function will lead the unmodified NEWUOA to fail. 
\begin{example}
\label{thefirstexample}
In the numerical experiments corresponding to Table \ref{table_example_if_succ}, the objective function is
\[
f(\boldsymbol{y})=\sum_{i=1}^{10} y_i^4+\sum_{i=1}^{10} y_i^2.
\] 
Notice that we denote $\boldsymbol{y}=(y_1,\cdots,y_n)^{\top}$ in the part of numerical experiments, and $n$ is 10 in this example. Besides, the initial point is $(10,\cdots,10)^{\top}$, and the constant $C=1$ in (\ref{transformation}). The analytic solution of the numerical experiments is $(0,\cdots,0)^{\top}$, of which the corresponding minimum function value is $0$. The notations $\checkmark$ and $\times$ in Table \ref{table_example_if_succ} denote whether the algorithm solves the problem successfully. The notation $\checkmark$ means that the value of $f_{\text{opt}}$, which is the numerical optimal function value obtained by each algorithm, is less than $10^{-3}$, and the notation $\times$ denotes a failure to reach that accuracy. The notation NF denotes the number of evaluated points until the iteration terminates. Besides, NEWUOA-N obtains the point with function value \(f_{\text{opt}}\) lower than \(10^{-16}\) using 990 function evaluations.

\begin{table}[htbp] 
\caption{Numerical results for Example \ref{thefirstexample}\label{table_example_if_succ}}
\begin{center}
\begin{tabular}{lllllll} 
\toprule
Transformation&\multicolumn{3}{l}{$\eta_k \sim \operatorname{Lap}(\frac{1}{k})$}
&\multicolumn{3}{l}{$\eta_k \sim \operatorname{Lap}(\frac{100}{k})$}
\\
Parameters&\multicolumn{3}{l}{$\gamma_k=0$}
&\multicolumn{3}{l}{$\gamma_k=0$}
\\
\midrule  
Algorithm  & NF & $f_{\text{opt}}$ & & NF & $f_{\text{opt}}$ &  \\ 
\midrule
  NEWUOA-Trans  
  & 1033 & 1.5626$\times 10^{-13}$ &  $\checkmark$
&1046 & 7.7485$\times 10^{-13}$ &  $\checkmark$
\\
 NEWUOA  
& 613 & 0.1375 &  $\times$
 & 348 & 7.2318 &  $\times$ 
\\
\toprule
Transformation&\multicolumn{3}{l}{$\eta_k \sim \operatorname{Lap}(\frac{10}{k})$}
&\multicolumn{3}{l}{$\eta_k=0$}
\\
Parameters&\multicolumn{3}{l}{$\gamma_k=0$}
&\multicolumn{3}{l}{$\gamma_k \sim \operatorname{U}(-\frac{1}{k},\frac{1}{k})$} 
\\
\midrule 
Algorithm  & NF & $f_{\text{opt}}$ & & NF & $f_{\text{opt}}$ & \\ 
\midrule  
  NEWUOA-Trans  
&847 & 2.6014$\times 10^{-13}$ &  $\checkmark$
  & 1055 & 3.1489$\times 10^{-13}$ &  $\checkmark$
\\
 NEWUOA 
 & 542 & 1.5818 &  $\times$ 
  &   408 & 0.7345 &  $\times$ 
\\
\toprule
Transformation&\multicolumn{3}{l}{$\eta_k \sim \operatorname{Lap}(\frac{100}{k})$}
&\multicolumn{3}{l}{$\eta_k \sim \operatorname{Lap}(\frac{100}{k})$}\\
Parameters&\multicolumn{3}{l}{$\gamma_k \sim \operatorname{U}(-\frac{1}{k},\frac{1}{k})$}
&\multicolumn{3}{l}{$\gamma_k \sim \operatorname{U}(-\frac{k}{10^4},\frac{k}{10^4})$} \\
\midrule
Algorithm  & NF & $f_{\text{opt}}$ & & NF & $f_{\text{opt}}$ &  \\ 
\midrule 
  NEWUOA-Trans   
  & 1056 & 4.1928$\times 10^{-13}$ &  $\checkmark$
  & 948 & 1.1924$\times 10^{-13}$ &  $\checkmark$ \\
 NEWUOA  
& 432 & 6.5330 &  $\times$
 &  409 & 4.0762 &  $\times$\\
\botrule
\end{tabular}
\end{center}
\end{table}

\begin{sloppypar}
From Table \ref{table_example_if_succ}, our numerical results indicate that NEWUOA can hardly solve these relatively basic and easy problems with transformed objective functions. In other words, it has unsatisfactory performances when solving derivative-free optimization problems with transformed objective functions, which is exactly caused by the influence of the transformation/noise. In addition, the results of NEWUOA-N and NEWUOA-Trans are similar, which shows that NEWUOA-Trans solves the optimization problems with transformed objective functions well, since NEWUOA-N plays a role as the baseline. 
\end{sloppypar}

\end{example}

\subsection{Performance profile and sensitivity profile}
\label{Performance profile}
\begin{sloppypar}
We use the performance profile \cite{more2009benchmarking,audet2017derivative} for the comparison of different algorithms. 
The performance profile describes the number of iterations taken by the algorithm in the algorithm set $\mathcal{A}$ to achieve a given accuracy when solving problems in a given problem set. 
We define the value 
\(
f_{\text{acc}}^{N}=\frac{f(\boldsymbol{x}^{(N)})-f(\boldsymbol{x}_{0})}{f(\boldsymbol{x}_{\text{best}})-f(\boldsymbol{x}_{0})} \in [0,1],
\)  
and the tolerance $\tau \in [0,1]$, where $\boldsymbol{x}^{(N)}$ denotes the best point found by the algorithm after obatining $N$ evaluated points, $\boldsymbol{x}_{0}$ denotes the initial point, and $\boldsymbol{x}_{\text{best}}$ denotes the best known solution. When $f_{\text{acc}}^{N} \geqslant 1-\tau$,  we say that the solution reaches the accuracy $\tau$. We give $N_{s,p}=\min\{n \in \mathbb{N},\ f_{\text{acc}}^{n}\geqslant 1-\tau \}$ and the definitions 
\[
\begin{aligned}
T_{s, p}&=\left\{\begin{aligned}
&1,  \text{ if} \ f_{\text{acc}}^{N} \geqslant 1-\tau \ \text{for some } N,\\
&0,  \text{ otherwise},
\end{aligned}\right.\\
r_{s, p}&=\left\{
\begin{aligned}
&\frac{N_{s, p}}{\min \left\{N_{\tilde{s}, p}:\ \tilde{s} \in \mathcal{A}, T_{\tilde{s}, p}=1\right\}},\ \text{if} \ T_{s, p}=1, \\
&\ \ \ \ \ \ \ \ \ \ \ \  \ \ \ \ \ +\infty,  \ \ \ \ \  \ \ \ \ \  \ \ \ \   \text{ if} \ T_{s, p}=0,
\end{aligned}\right.
\end{aligned}
\]
where $s$ is the given solver or algorithm. For the given tolerance $\tau$ and a certain problem $p$ in the problem set $\mathcal{P}$, the parameter $r_{s, p}$ shows the ratio of the number of the function evaluations using the solver $s$ divided by that using the fastest algorithm on the problem $p$. In the performance profile, $\pi_{s}(\alpha)=\frac{1}{\vert\mathcal{P}\vert}\left\vert\left\{p \in \mathcal{P}: r_{s, p} \leqslant \alpha\right\}\right\vert$, where  $\alpha \in [1, +\infty)$, and $\vert\cdot\vert$ denotes the cardinality. Notice that a higher value of $\pi_s(\alpha)$ represents solving more problems successfully. The test problems with numerical results in Fig. \ref{fig_perf_profile_comp} and Fig. \ref{fig_sens_profile_comp} are shown in Table \ref{table_test_prob}. Their dimensions are in the range 2 to 100,  and they are from classical and common unconstrained optimization test functions collections \cite{Powell2003, Conn1994, Toint1978, Li2009, Luksan2010, Andrei2008, Li1988, CUTEr,MorTesting1981}. The
upper bound of the number of function evaluations is set to be 10000. 
\end{sloppypar}
\begin{table}[htbp] 
  \centering   
    \caption{Test problems\label{table_test_prob}} 
      \setlength\tabcolsep{1pt}
  \begin{tabular}{lllllll}  
     \toprule 
    ARGLINA&  ARGLINA4&  ARGLINB &  ARGLINC & ARGTRIG\\
     ARWHEAD& BDQRTIC &  BDQRTICP &BDALUE & BROWNAL \\
      BROYDN3D &  BROYDN7D & BRYBND&  CHAINWOO & CHEBQUAD \\
      CHNROSNBZ & CHPOWELLB &CHPOWELLS & CHROSEN &COSINECUBE\\
      CURLY10 &  CURLY20 &CURLY30 &DIXMAANE & DIXMAANF \\
      DIXMAANG &DIXMAANH & DIXMAANI & DIXMAANJ &DIXMAANK \\
     DIXMAANL &DIXMAANM & DIXMAANN &DIXMAANO &DIXMAANP \\
      DQRTIC & EDENSCH  &ENGVAL1 &ERRINROS & EXPSUM \\
      EXTROSNB & EXTTET & FIROSE &FLETCBV2 & FLETCBV3 \\      
      FLETCHCR &FMINSRF2 & FREUROTH& GENBROWN & GENHUMPS \\
      GENROSE &INDEF &INTEGREQ & LIARWHD & LILIFUN3 \\
       LILIFUN4 & MOREBV &MOREBVL &NCB20 & NCB20B \\
       NONCVXU2 &NONCVXUN & NONDIA & NONDQUAR &PENALTY1 \\
       PENALTY2 &PENALTY3 &PENALTY3P &POWELLSG & POWER\\
        ROSENBROCK & SBRYBND & SBRYBNDL &SCHMVETT &SCOSINE \\
        SCOSINEL & SEROSE & SINQUAD &SPARSINE & SPARSQUR \\ 
       SPHRPTS & SPMSRTLS &SROSENBR & STMOD & TOINTGSS \\
       TOINTTRIG &TQUARTIC& TRIGSABS &TRIGSSQS & TRIROSE1 \\
     TRIROSE2 & VARDIM &WOODS & - & - \\ 
     \botrule                                  
  \end{tabular}
\end{table}

\begin{figure}[htbp]
\centering
\subfigure[$\tau=10^{-1}$\label{e}]{
\includegraphics[width=0.42\linewidth]{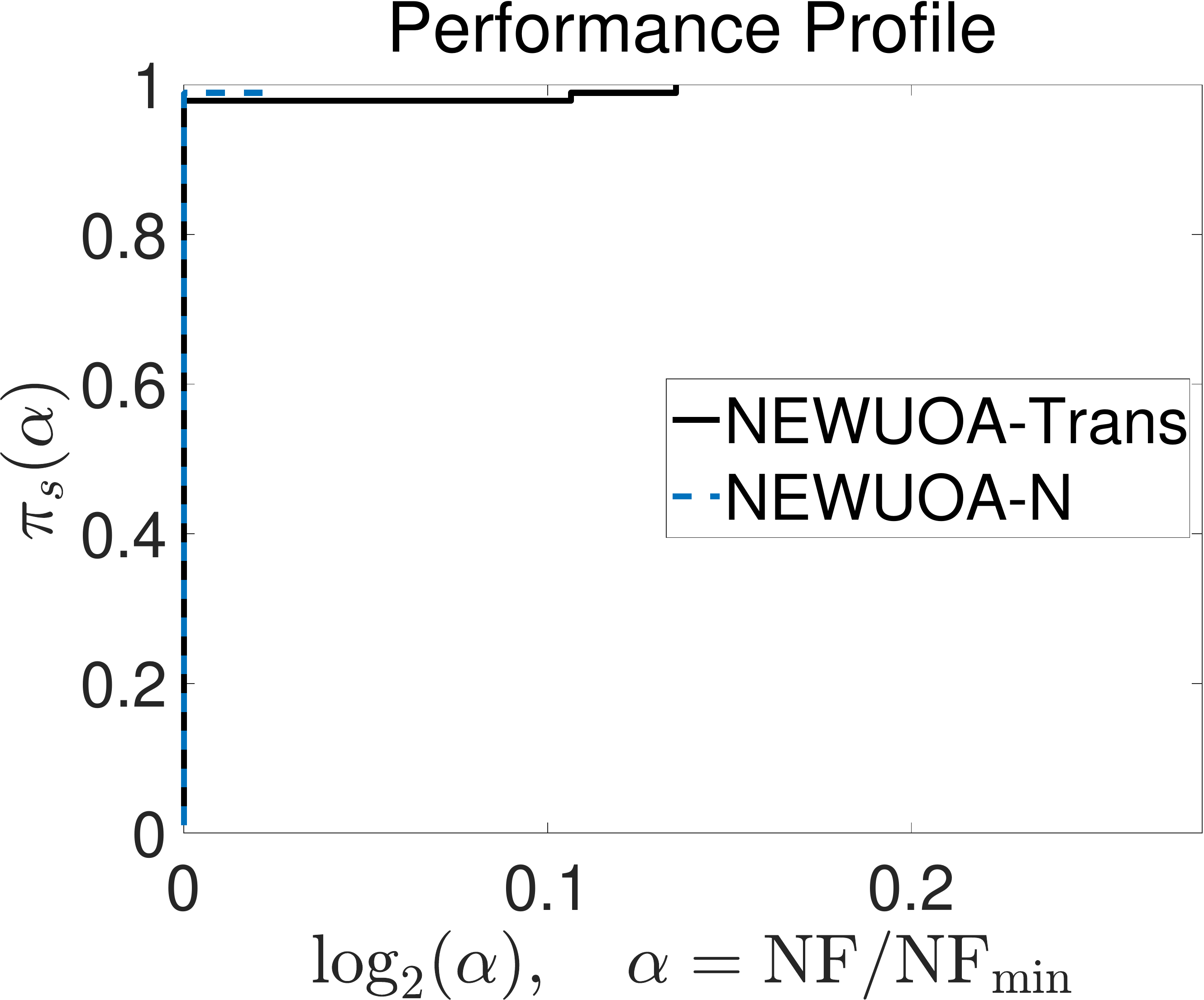}
}
\subfigure[$\tau=10^{-2}$\label{f}]{
\includegraphics[width=0.42\linewidth]{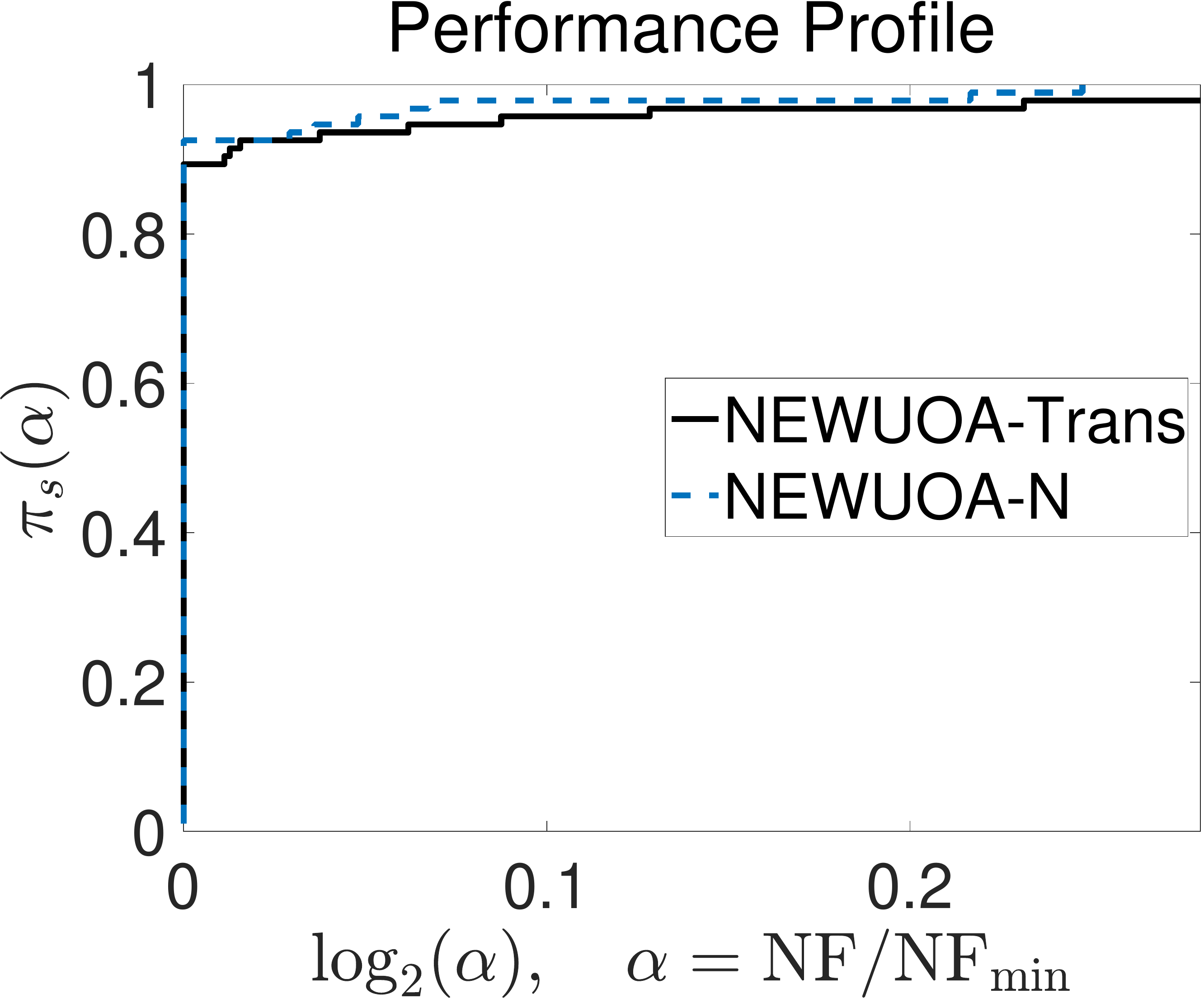}
}\\
\subfigure[$\tau=10^{-3}$\label{g}]{
\includegraphics[width=0.42\linewidth]{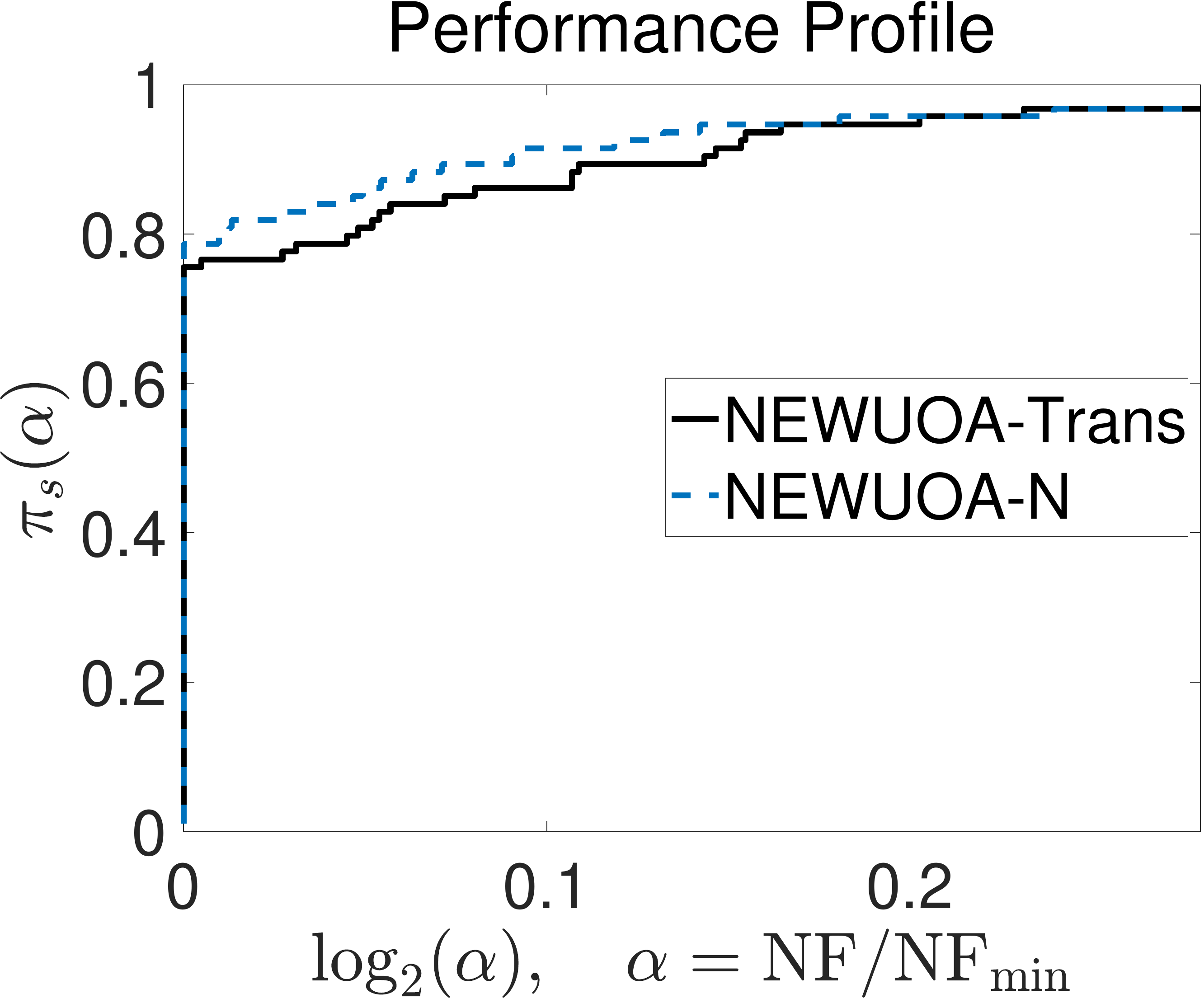}
}
\subfigure[$\tau=10^{-4}$\label{h}]{
\includegraphics[width=0.42\linewidth]{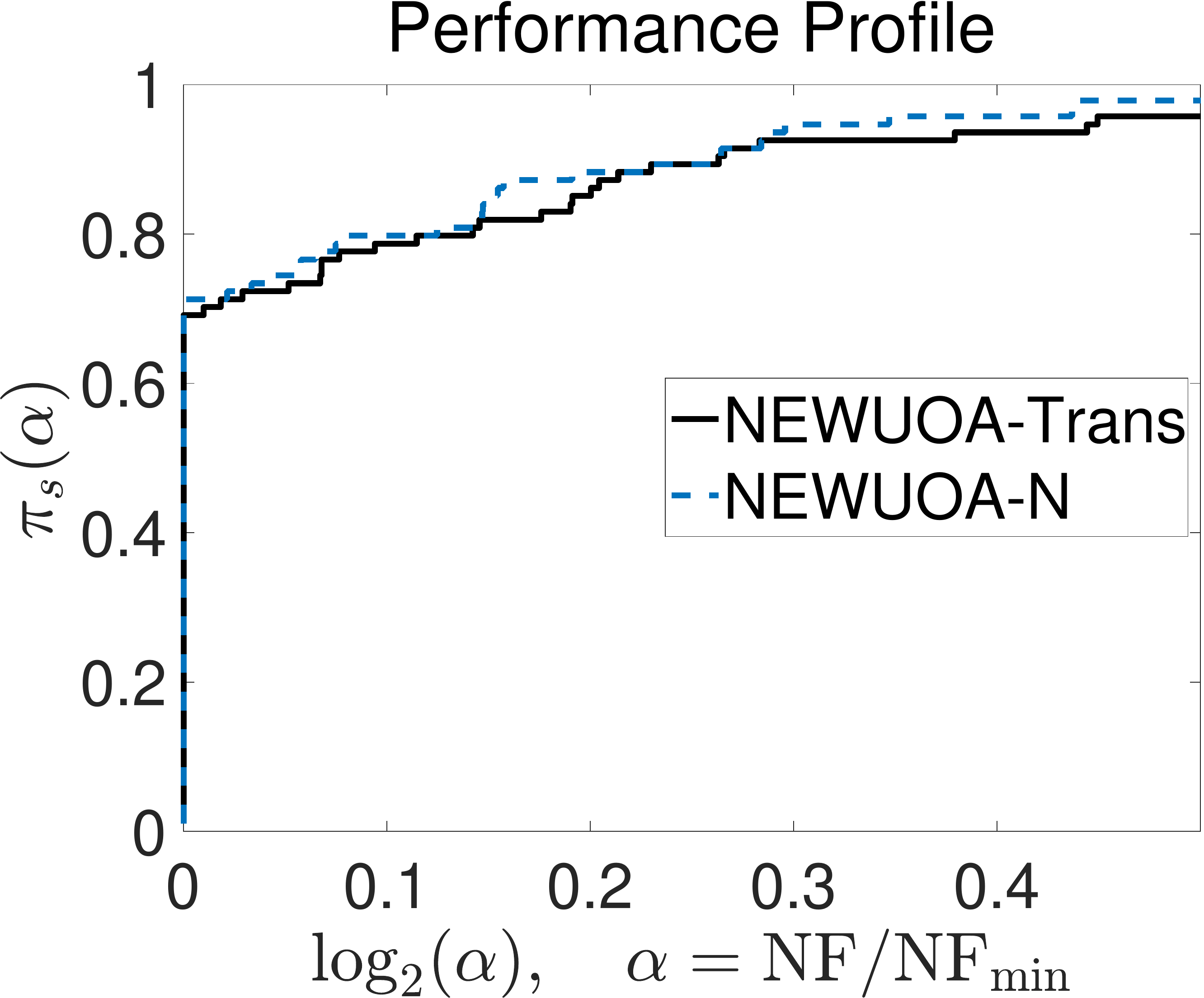}
}\\
\subfigure[$\tau=10^{-5}$\label{i}]{
\includegraphics[width=0.42\linewidth]{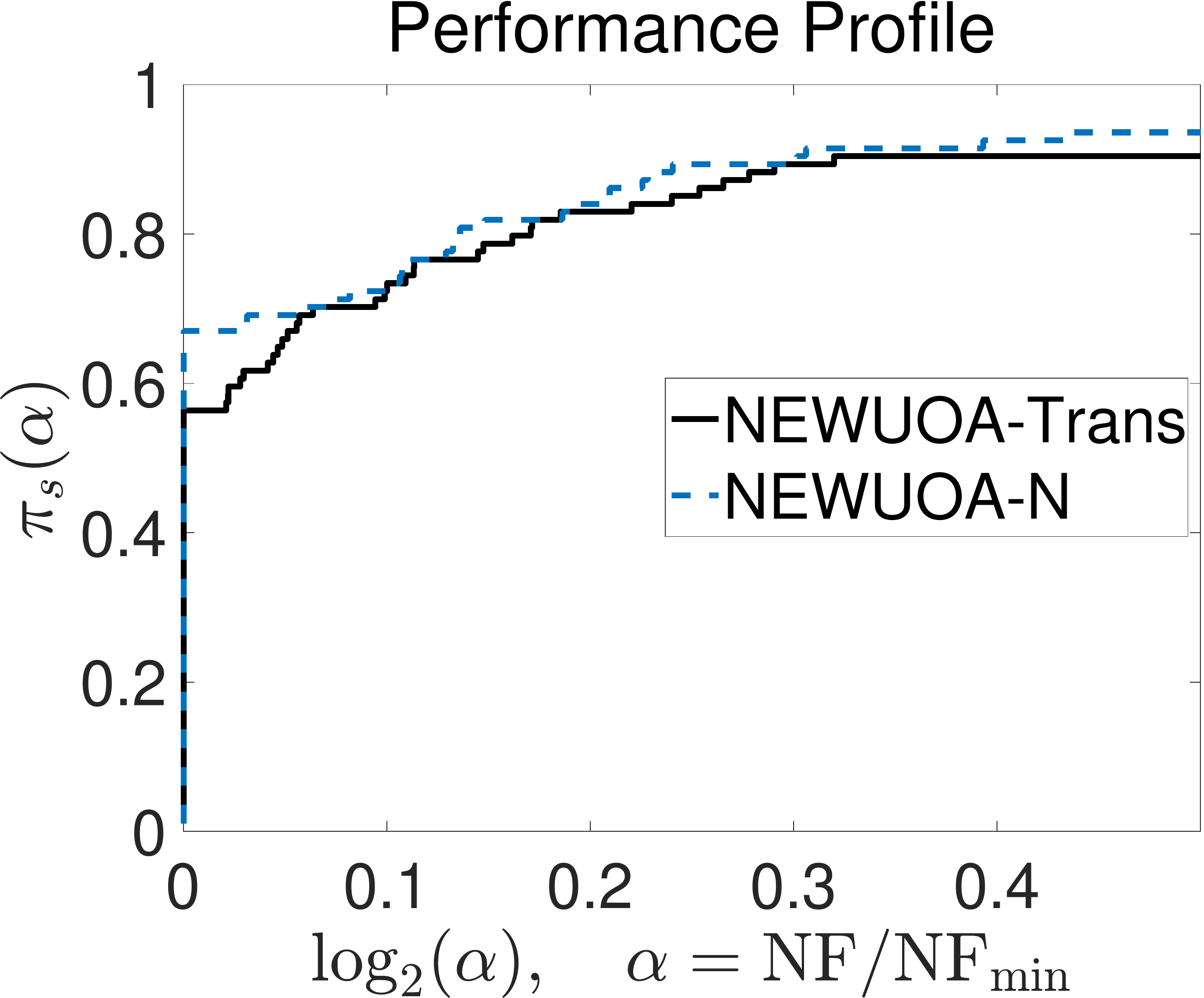}
}
\subfigure[$\tau=10^{-6}$\label{j}]{
\includegraphics[width=0.42\linewidth]{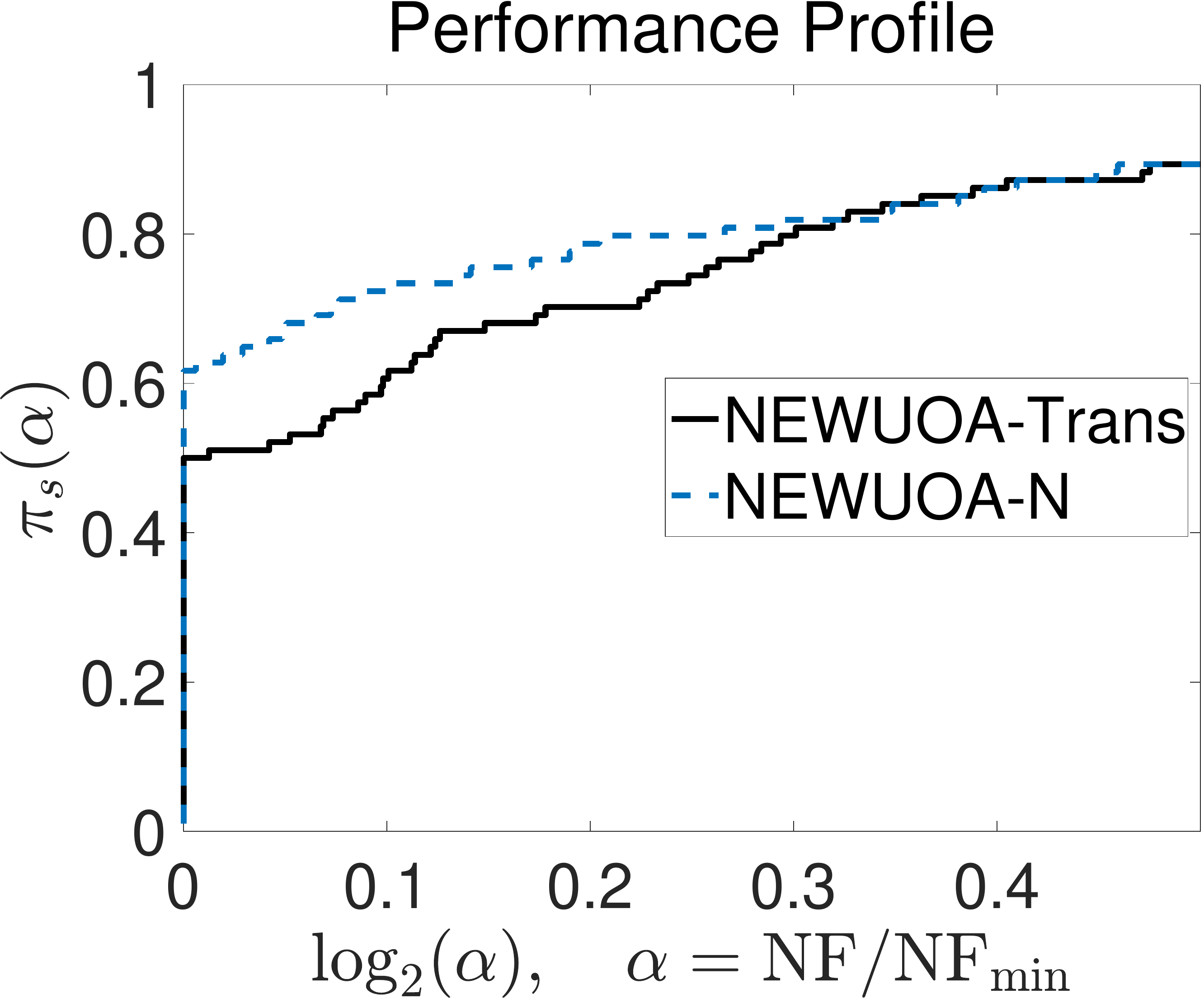}
}

\caption{The comparison of solvers solving the test problems: performance profile\label{fig_perf_profile_comp}}
\end{figure}

As shown in Example \ref{thefirstexample}, unmodified NEWUOA is not suitable for solving the DFOTO problems. We compare NEWUOA-Trans and NEWUOA-N here. 
The parameters in the transformation are $C=100$, $\eta_k\sim\operatorname{Lap}(\frac{100}{k})$, $\gamma_k\sim\operatorname{U}(-\frac{1}{k}, \frac{1}{k})$, and the two solvers share the same initial points for all problems. In Figs. \ref{e}-\ref{j}, it can be observed that when the tolerance $\tau=10^{-1},\cdots,10^{-6}$, NEWUOA-Trans and NEWUOA-N basically have the similar performance. NEWUOA-Trans  performs well for problems with transformed objective functions. The comparison in Figs. \ref{e}-\ref{j} reveals that NEWUOA-Trans can successfully solve most of the derivative-free/black-box optimization problems with transformed objective functions. The slight difference between NEWUOA-Trans and NEWUOA-N (the baseline without noise)  comes from the impact on the model from the random noise terms.

\begin{sloppypar}
In the numerical results reported in Fig. \ref{fig_sens_profile_comp}, the objective functions are chosen from test problems in Table \ref{table_test_prob}. In addition, $C=100$ and $\tau=10^{-4}$. A higher value of $\sigma_s(\alpha)$ refers to a more stable algorithm in sensitivity profile \cite{012}. Fig. \ref{fig_sens_profile_comp} depicts that the performance of NEWUOA-Trans is similar with that of NEWUOA-N, which means that the round-off error of NEWUOA-Trans is similar with that of our compared baseline NEWUOA-N. In fact, the sensitivity profile is another important criterion that evaluates the stability of the algorithms. We denote random permutation matrices as $\PP_i\in{\mathbb{R}}^{n\times n}$, $i=1,2, \cdots, M$. In the experiments, $M=100$, and an example of the random permutation matrices is $\PP_1=\left(\boldsymbol{e}_1, \boldsymbol{e}_2,\boldsymbol{e}_4,\boldsymbol{e}_3,\boldsymbol{e}_8,\boldsymbol{e}_5,\boldsymbol{e}_9,\boldsymbol{e}_{10},\boldsymbol{e}_6,\boldsymbol{e}_7\right)^{\top}$. 
Besides, we define
$
\text{NF}=\left(\text{NF}_{1}, \cdots, \text{NF}_{M}\right),
$ 
where $\text{NF}_{i}$ denotes the number of function evaluations when solving the corresponding problem
\(
\min_{\boldsymbol{x} \in {\mathbb{R}}^{n}} f(\PP_i\boldsymbol{x}).
\)  
We define 
$
\operatorname{mean}(\text{NF})$ as $\frac{1}{M} \sum_{i}^{M} \text{NF}_{i}$, and the standard deviation $\operatorname{std}(\text{NF})$ as $\sqrt{\frac{1}{M} \sum_{i}^{M}(\text{NF}_{i}-\operatorname{mean}(\text{NF}))^{2}}$. In the performance profile, we apply $\operatorname{std}(\text{NF})$, corresponding to solving the problem $p$ with the solver $s$, instead of $N_{s,p}$, to obtain $\sigma_s(\alpha)$ instead of $\pi_s(\alpha)$, and then we obtain the sensitivity profile. 
\end{sloppypar}

\begin{figure}[htbp]
\centering
{
\includegraphics[width=0.5\linewidth]{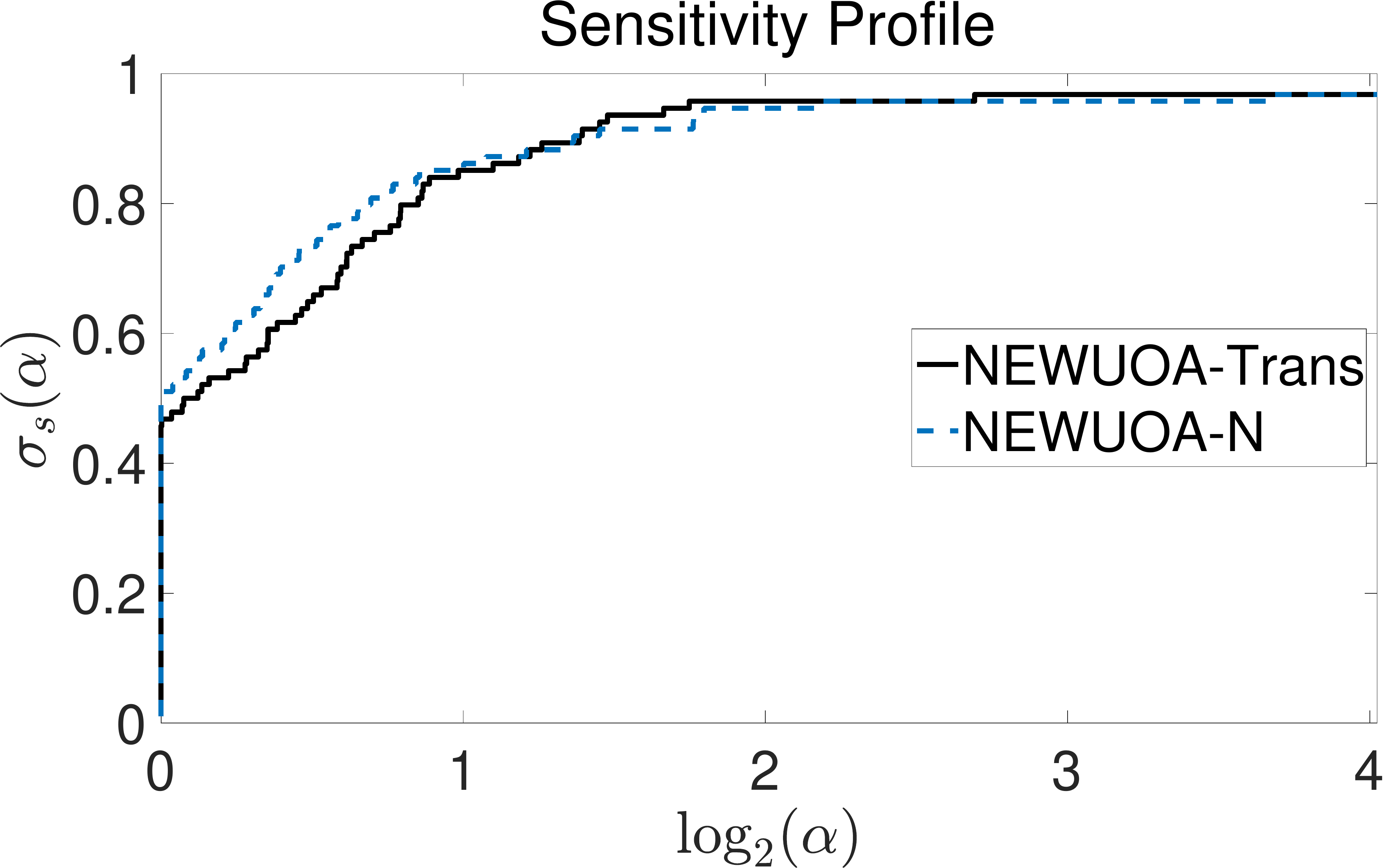}
}

\caption{The comparison of solvers solving the test problems: sensitivity profile\label{fig_sens_profile_comp}}
\end{figure}

We can see that the interference of the transformation barely affects NEWUOA-Trans since the curves of NEWUOA-Trans and NEWUOA-N are close to each other, while such interference indeed affects the other algorithms. The performance of the algorithm is consistent with the theoretical analysis. 

\begin{sloppypar}
Our numerical experiments show that NEWUOA-Trans can efficiently and stably solve most derivative-free optimization problems with transformed objective functions. The performances of NEWUOA-Trans and NEWUOA-N are similar, which shows that NEWUOA-Trans solves the optimization problems with transformed objective functions well, eliminating the interference from the transformation.  
\end{sloppypar}

\subsection{Real-world experiments}\label{TWT}

We test our method for private engineering optimal design of the space traveling wave tube (TWT) \cite{xbg2}. The TWT is a critical kind of the vacuum electronic device \cite{xbg1}. It affects the signal quality and intensity and is mainly used in communication, transportation, navigation, meteorological measurement, forecast, and other fields. The space TWT is the most expensive one with the highest requirements in the TWT family. 

Because of the special working environment of the space TWT, it is necessary to save the energy of the satellite as much as possible. Therefore, the efficiency of the space TWT is very critical. In the design of the space TWT, the design parameters are important to the indicator, efficiency, which is exactly what we want to maximize, while its expression is unknown, and the relationship between the parameters and the objective function is hard to analyze. Moreover, the data of the space TWT is difficult to obtain. Most of the data about its efficiency performance needs the experiment or the simulation, which refers to a very high cost, and some of them are encrypted in practice. It usually costs very much to obtain an objective function value with the given parameters even though engineers use the approximate values of the efficiency obtained by the simulation software, which still usually costs a long time. Therefore, it is suitable to apply derivative-free methods to solve such an expensive optimization problem. As an important industrial product related to the security, copyrights, interests, and so on, the true value of the efficiency of some special kind of space traveling wave tube is supposed to be encrypted during the optimization process (especially to the public and the optimization solver, which is the third party beyond the user/buyer). Thus optimizing the efficiency of the special kind of TWT is a private black-box optimization problem, and can belong to DFOTO.

We do the following numerical experiments, and the test data of the space TWT is provided by Beijing vacuum electronics research institute (BVERI). The purpose of the numerical experiment here is to find the input parameters for privately designing space TWT that has the highest efficiency, which is formulated to the private unconstrained DFO problem\footnote{Such unconstrained problem is formulated by BVERI, and some weak constraints are deleted in advance for simplicity, which does not influent the optimization.}
\[
\max\limits_{\mathcal{P}_{\text{input}}}\ {\rm Efficiency}\left(\mathcal{P}_{\text{input}}\right),
\]
where $\mathcal{P}_{\text{input}}$ is a $10$-dimensional vector, which denotes the parameters for designing the space TWT. In order to protect the true values of the efficiency, the designers of the space TWT apply the stochastic affine transformation at each queried step to encrypt the true function value in the query process. {The basic query process follows Assumption \ref{Qassumption} and Table \ref{table1}.} We apply NEWUOA-Trans to solve this problem, and we choose the initial input \(10^{-1}\times(2,\cdots,2)^{\top}\) as the initial point, and $\rho_{\text{beg}}=10^{-1}$, $\rho_{\text{end}}=10^{-4}$. NEWUOA-Trans terminates after 226 iterations. The iteration process can be seen in Table \ref{table_iter_dist_real}, which presents the $\ell_2$ norm distance between the best iteration point at the $k$-th iteration, i.e., \(\boldsymbol{x}_{\text{best}}^{(k)}\), and the final solution $\boldsymbol{x}^*$.

\begin{table}[htbp]
  \centering   
  \caption{The $\ell_2$ norm distance between the best iteration point at the \(k\)-th step and the final solution: \(\|\boldsymbol{x}_{\text{\rm best}}^{(k)}-\boldsymbol{x}^*\|_2\)\label{table_iter_dist_real}}  
  \begin{tabular}{lllllllll}     
     \toprule
Iter.  & 10 & 20 & 30 & 40 & 50 & 60 & 70 & 80 \\
Dist.  & 
84.4577&
42.6845&
19.0530&
13.7870&
7.7990&
4.7825&
0.9851&
0.8116 \\
\midrule 
Iter.  & 90& 100& 110  &120 & 130 & 140 & 150 & 160\\
Dist. &
0.7110&
0.5525&
0.5106&
0.4705&
0.4034&
0.3035&
0.1318&
0.1102\\
\midrule 
Iter.   & 170 & 180 & 190& 200& 210& 220 \\
Dist. &
0.0800&
0.0560&
0.0370&
0.0102&
0.0025&
0\\ 
 \botrule                        
  \end{tabular}
\end{table}

To check our results, we can simulate the design with the optimized designing parameters with the large electromagnetic computing software CST, and find that, at the working frequency band, the efficiency is increased from the best setting based on the expert knowledge and experience, as shown in Table \ref{efficiency}.
\begin{table}[htbp]
\caption{Efficiency increment\label{efficiency}}
\centering
\begin{tabular}{llll}
 \toprule
Frequency point (GHz)& $94$ & $97$ & $100$ 
\\
\midrule
Efficiency increment (\textperthousand) & $53 $ & $62$ & $66$ \\
 \botrule
\end{tabular}
\end{table}

According to the evaluation mechanism of BVERI, parameters obtained by solving the corresponding DFOTO problem using NEWUOA-Trans achieve the maximum efficiency for the design of this special space TWT with acceptable number of function evaluations, and the corresponding maximum efficiency is satisfactory in the industry. This shows that our method has strong practicality and high accuracy. The industry also potentially favors the privacy protection property of our method, since the data providers can only (and only need to) output the transformed function values. The above preliminary application also encourages us to apply our method in broader fields.

\section{Conclusions}
\begin{sloppypar}

Before closing this article, we propose the following continuously-transformed problem beyond the query/evaluation oracle in Assumption \ref{Qassumption}, which is a new challenging mathematical programming problem. 
\begin{OP}[derivative-free methods minimizing ``moving-target'' type objective function]\label{moving-target}
Try to design practical numerical optimization algorithms for the unconstrained problem (\ref{transformproblem}), where $f(\boldsymbol{x},t)$ is the practical output value of the black-box function $f$ at $\boldsymbol{x}\in{\mathbb{R}}^n$, given the $t\in\mathbb{R}$, where $t$ depends exactly on the query/evaluation order of the current point $\boldsymbol{x}$, or $t$ can be regarded as the time. In the other word, the set of queried function values will hold the form as 
\(\{f(\boldsymbol{x}_1,t_1),f(\boldsymbol{x}_2,t_2),\cdots,f(\boldsymbol{x}_k,t_k),\cdots\}\), where \(t_k\) denotes the discrete query time. 
\end{OP}

In conclusion, this article discusses derivative-free optimization with transformed objective function (DFOTO). We present a method with a corresponding query scheme. The existence of model optimality-preserving transformations in addition to the translation transformation is proved for the strictly convex model with its unique minimizer in the trust region. The necessary and sufficient condition for the transformed function value, referring to model optimality-preserving transformations, is proposed. We obtain the corresponding quadratic model of the affinely transformed objective function and prove that some positive monotonical transformations, even affine transformations with positive multiplication coefficients, are not model optimality-preserving transformations. We also provide the interpolation error analysis for the corresponding model function, given affinely transformed objective functions. Convergence analysis for first-order critical points is given. Numerical results of test problems and the real-world problem support our method and analysis.

This article is a preliminary attempt to throw light on this direction, and much about derivative-free optimization with transformation remains to be studied. More applications with details of building the least Frobenius norm updating quadratic models for derivative-free optimization with transformed objective functions (e.g., private black-box optimization with noise-adding mechanisms) will be investigated as future work. The fully linear error constants discussed in Section \ref{Convergence} are allowed to change at each iteration rather than being uniform bounds, so they could potentially grow unboundedly across iterations if the multiplication coefficient $C_1$ is unbounded. Thus the convergence analysis of solving the problems with transformations under a weaker assumption than that being used in Section \ref{Convergence} is still an open and challenging problem. Open Problem \ref{moving-target} about minimizing ``moving-target'' type objective function is also interesting and valuable. 

\end{sloppypar}




\bibliography{JORSC-bibliography}


\begin{thebibliography}{93}
\ifx \bisbn   \undefined \def \bisbn  #1{ISBN #1}\fi
\ifx \binits  \undefined \def \binits#1{#1}\fi
\ifx \bauthor  \undefined \def \bauthor#1{#1}\fi
\ifx \batitle  \undefined \def \batitle#1{#1}\fi
\ifx \bjtitle  \undefined \def \bjtitle#1{#1}\fi
\ifx \bvolume  \undefined \def \bvolume#1{\textbf{#1}}\fi
\ifx \byear  \undefined \def \byear#1{#1}\fi
\ifx \bissue  \undefined \def \bissue#1{#1}\fi
\ifx \bfpage  \undefined \def \bfpage#1{#1}\fi
\ifx \blpage  \undefined \def \blpage #1{#1}\fi
\ifx \burl  \undefined \def \burl#1{\textsf{#1}}\fi
\ifx \doiurl  \undefined \def \doiurl#1{\url{https://doi.org/#1}}\fi
\ifx \betal  \undefined \def \betal{\textit{et al.}}\fi
\ifx \binstitute  \undefined \def \binstitute#1{#1}\fi
\ifx \binstitutionaled  \undefined \def \binstitutionaled#1{#1}\fi
\ifx \bctitle  \undefined \def \bctitle#1{#1}\fi
\ifx \beditor  \undefined \def \beditor#1{#1}\fi
\ifx \bpublisher  \undefined \def \bpublisher#1{#1}\fi
\ifx \bbtitle  \undefined \def \bbtitle#1{#1}\fi
\ifx \bedition  \undefined \def \bedition#1{#1}\fi
\ifx \bseriesno  \undefined \def \bseriesno#1{#1}\fi
\ifx \blocation  \undefined \def \blocation#1{#1}\fi
\ifx \bsertitle  \undefined \def \bsertitle#1{#1}\fi
\ifx \bsnm \undefined \def \bsnm#1{#1}\fi
\ifx \bsuffix \undefined \def \bsuffix#1{#1}\fi
\ifx \bparticle \undefined \def \bparticle#1{#1}\fi
\ifx \barticle \undefined \def \barticle#1{#1}\fi
\bibcommenthead
\ifx \bconfdate \undefined \def \bconfdate #1{#1}\fi
\ifx \botherref \undefined \def \botherref #1{#1}\fi
\ifx \url \undefined \def \url#1{\textsf{#1}}\fi
\ifx \bchapter \undefined \def \bchapter#1{#1}\fi
\ifx \bbook \undefined \def \bbook#1{#1}\fi
\ifx \bcomment \undefined \def \bcomment#1{#1}\fi
\ifx \oauthor \undefined \def \oauthor#1{#1}\fi
\ifx \citeauthoryear \undefined \def \citeauthoryear#1{#1}\fi
\ifx \endbibitem  \undefined \def \endbibitem {}\fi
\ifx \bconflocation  \undefined \def \bconflocation#1{#1}\fi
\ifx \arxivurl  \undefined \def \arxivurl#1{\textsf{#1}}\fi
\csname PreBibitemsHook\endcsname

\bibitem{conn2009introduction}
\begin{bbook}
\bauthor{\bsnm{Conn}, \binits{A.R.}},
\bauthor{\bsnm{Scheinberg}, \binits{K.}},
\bauthor{\bsnm{Vicente}, \binits{L.N.}}:
\bbtitle{Introduction to Derivative-free Optimization}.
\bpublisher{SIAM},
\blocation{Philadelphia}
(\byear{2009})
\end{bbook}
\endbibitem

\bibitem{audet2017derivative}
\begin{bbook}
\bauthor{\bsnm{Audet}, \binits{C.}},
\bauthor{\bsnm{Hare}, \binits{W.}}:
\bbtitle{Derivative-free and Blackbox Optimization}.
\bpublisher{Springer},
\blocation{Berlin}
(\byear{2017})
\end{bbook}
\endbibitem

\bibitem{Audet2006}
\begin{barticle}
\bauthor{\bsnm{Audet}, \binits{C.}},
\bauthor{\bsnm{Orban}, \binits{D.}}:
\batitle{Finding optimal algorithmic parameters using derivative-free
  optimization}.
\bjtitle{SIAM J. Optim.}
\bvolume{17}(\bissue{3}),
\bfpage{642}--\blpage{664}
(\byear{2006})
\end{barticle}
\endbibitem

\bibitem{2019AlyUEMCON}
\begin{bchapter}
\bauthor{\bsnm{Aly}, \binits{A.}},
\bauthor{\bsnm{Guadagni}, \binits{G.}},
\bauthor{\bsnm{Dugan}, \binits{J.B.}}:
\bctitle{Derivative-free optimization of neural networks using local search}.
In: \bbtitle{2019 IEEE 10th Annual Ubiquitous Computing, Electronics Mobile
  Communication Conference (UEMCON)},
pp. \bfpage{293}--\blpage{299}.
\bpublisher{IEEE},
\blocation{Piscataway}
(\byear{2019})
\end{bchapter}
\endbibitem

\bibitem{Higham2002}
\begin{bbook}
\bauthor{\bsnm{Higham}, \binits{N.J.}}:
\bbtitle{{Accuracy and Stability of Numerical Algorithms}}.
\bpublisher{SIAM},
\blocation{Philadelphia}
(\byear{2002})
\end{bbook}
\endbibitem

\bibitem{levina2009dynamic}
\begin{barticle}
\bauthor{\bsnm{Levina}, \binits{T.}},
\bauthor{\bsnm{Levin}, \binits{Y.}},
\bauthor{\bsnm{McGill}, \binits{J.}},
\bauthor{\bsnm{Nediak}, \binits{M.}}:
\batitle{Dynamic pricing with online learning and strategic consumers: an
  application of the aggregating algorithm}.
\bjtitle{Oper. Res.}
\bvolume{57}(\bissue{2}),
\bfpage{327}--\blpage{341}
(\byear{2009})
\end{barticle}
\endbibitem

\bibitem{LiandXie}
\begin{bchapter}
\bauthor{\bsnm{Li}, \binits{S.}},
\bauthor{\bsnm{Xie}, \binits{P.}},
\bauthor{\bsnm{Zhou}, \binits{Z.}},
\bauthor{\bsnm{Wang}, \binits{Z.}},
\bauthor{\bsnm{Li}, \binits{Z.}},
\bauthor{\bsnm{Liang}, \binits{X.}},
\bauthor{\bsnm{Qu}, \binits{B.}}:
\bctitle{Simulation of interaction of folded waveguide space traveling wave
  tubes with derivative-free mixedinteger based {NEWUOA} algorithm}.
In: \bbtitle{2021 7th International Conference on Computer and Communications},
pp. \bfpage{1215}--\blpage{1219}.
\bpublisher{IEEE},
\blocation{Piscataway}
(\byear{2021})
\end{bchapter}
\endbibitem

\bibitem{hooke1961direct}
\begin{barticle}
\bauthor{\bsnm{Hooke}, \binits{R.}},
\bauthor{\bsnm{Jeeves}, \binits{T.A.}}:
\batitle{``{Direct} search'' solution of numerical and statistical problems}.
\bjtitle{J. ACM}
\bvolume{8}(\bissue{2}),
\bfpage{212}--\blpage{229}
(\byear{1961})
\end{barticle}
\endbibitem

\bibitem{Nelder1965}
\begin{barticle}
\bauthor{\bsnm{Nelder}, \binits{J.A.}},
\bauthor{\bsnm{Mead}, \binits{R.}}:
\batitle{{A simplex method for function minimization}}.
\bjtitle{Comput. J.}
\bvolume{7}(\bissue{4}),
\bfpage{308}--\blpage{313}
(\byear{1965})
\end{barticle}
\endbibitem

\bibitem{Tseng1999}
\begin{barticle}
\bauthor{\bsnm{Tseng}, \binits{P.}}:
\batitle{Fortified-descent simplicial search method: A general approach}.
\bjtitle{SIAM J. Optim.}
\bvolume{10},
\bfpage{269}--\blpage{288}
(\byear{1999})
\end{barticle}
\endbibitem

\bibitem{Kolda2003}
\begin{barticle}
\bauthor{\bsnm{Kolda}, \binits{T.}},
\bauthor{\bsnm{Lewis}, \binits{R.}},
\bauthor{\bsnm{Torczon}, \binits{V.}}:
\batitle{Optimization by direct search: New perspectives on some classical and
  modern methods}.
\bjtitle{SIAM Rev.}
\bvolume{45},
\bfpage{385}--\blpage{482}
(\byear{2003})
\end{barticle}
\endbibitem

\bibitem{nomad}
\begin{botherref}
\oauthor{\bsnm{Audet}, \binits{C.}},
\oauthor{\bsnm{{Le~Digabel}}, \binits{S.}},
\oauthor{\bsnm{Tribes}, \binits{C.}}:
{NOMAD} user guide.
Technical report,
Les Cahiers du GERAD
(2009)
\end{botherref}
\endbibitem

\bibitem{rosenbrock1960automatic}
\begin{barticle}
\bauthor{\bsnm{Rosenbrock}, \binits{H.}}:
\batitle{An automatic method for finding the greatest or least value of a
  function}.
\bjtitle{Comput. J.}
\bvolume{3}(\bissue{3}),
\bfpage{175}--\blpage{184}
(\byear{1960})
\end{barticle}
\endbibitem

\bibitem{swann1972direct}
\begin{bchapter}
\bauthor{\bsnm{Swann}, \binits{W.H.}}:
\bctitle{Direct search methods}.
In: \beditor{\bsnm{Murray}, \binits{W.}} (ed.)
\bbtitle{Numerical Methods for Unconstrained Optimization},
pp. \bfpage{13}--\blpage{28}.
\bpublisher{Academic Press},
\blocation{London}
(\byear{1972})
\end{bchapter}
\endbibitem

\bibitem{smith1962automatic}
\begin{botherref}
\oauthor{\bsnm{Smith}, \binits{C.S.}}:
The automatic computation of maximum likelihood estimates.
Technical report,
Scientific Department, National Coal Board
(1962)
\end{botherref}
\endbibitem

\bibitem{powell1964efficient}
\begin{barticle}
\bauthor{\bsnm{Powell}, \binits{M.J.D.}}:
\batitle{An efficient method for finding the minimum of a function of several
  variables without calculating derivatives}.
\bjtitle{Comput. J.}
\bvolume{7}(\bissue{2}),
\bfpage{155}--\blpage{162}
(\byear{1964})
\end{barticle}
\endbibitem

\bibitem{stewart1967modification}
\begin{barticle}
\bauthor{\bsnm{Stewart~III}, \binits{G.}}:
\batitle{A modification of {Davidon}'s minimization method to accept difference
  approximations of derivatives}.
\bjtitle{J. ACM}
\bvolume{14}(\bissue{1}),
\bfpage{72}--\blpage{83}
(\byear{1967})
\end{barticle}
\endbibitem

\bibitem{gill1972quasi}
\begin{barticle}
\bauthor{\bsnm{Gill}, \binits{P.E.}},
\bauthor{\bsnm{Murray}, \binits{W.}}:
\batitle{{Quasi-Newton} methods for unconstrained optimization}.
\bjtitle{IMA J. Appl. Math.}
\bvolume{9}(\bissue{1}),
\bfpage{91}--\blpage{108}
(\byear{1972})
\end{barticle}
\endbibitem

\bibitem{Gill1983}
\begin{barticle}
\bauthor{\bsnm{Gill}, \binits{P.E.}},
\bauthor{\bsnm{Murray}, \binits{W.}},
\bauthor{\bsnm{Saunders}, \binits{M.A.}},
\bauthor{\bsnm{Wright}, \binits{M.H.}}:
\batitle{Computing forward-difference intervals for numerical optimization}.
\bjtitle{SIAM J. Sci. Statist. Comput.}
\bvolume{4}(\bissue{2}),
\bfpage{310}--\blpage{321}
(\byear{1983})
\end{barticle}
\endbibitem

\bibitem{Nesterov2017}
\begin{barticle}
\bauthor{\bsnm{Nesterov}, \binits{Y.}},
\bauthor{\bsnm{Spokoiny}, \binits{V.}}:
\batitle{Random gradient-free minimization of convex functions}.
\bjtitle{Found. Comput. Math.}
\bvolume{17}(\bissue{2}),
\bfpage{527}--\blpage{566}
(\byear{2017})
\end{barticle}
\endbibitem

\bibitem{duchi2015optimal}
\begin{barticle}
\bauthor{\bsnm{Duchi}, \binits{J.C.}},
\bauthor{\bsnm{Jordan}, \binits{M.I.}},
\bauthor{\bsnm{Wainwright}, \binits{M.J.}},
\bauthor{\bsnm{Wibisono}, \binits{A.}}:
\batitle{Optimal rates for zero-order convex optimization: The power of two
  function evaluations}.
\bjtitle{IEEE Trans. Inf. Theory}
\bvolume{61}(\bissue{5}),
\bfpage{2788}--\blpage{2806}
(\byear{2015})
\end{barticle}
\endbibitem

\bibitem{Scheinberg2022}
\begin{barticle}
\bauthor{\bsnm{Scheinberg}, \binits{K.}}:
\batitle{Finite difference gradient approximation: To randomize or not?}
\bjtitle{INFORMS J. Comput.}
\bvolume{34}(\bissue{5}),
\bfpage{2384}--\blpage{2388}
(\byear{2022})
\end{barticle}
\endbibitem

\bibitem{zhigljavsky2012theory}
\begin{bbook}
\bauthor{\bsnm{Zhigljavsky}, \binits{A.A.}}:
\bbtitle{Theory of Global Random Search}.
\bpublisher{Springer},
\blocation{Berlin}
(\byear{2012})
\end{bbook}
\endbibitem

\bibitem{berahas2022theoretical}
\begin{barticle}
\bauthor{\bsnm{Berahas}, \binits{A.S.}},
\bauthor{\bsnm{Cao}, \binits{L.}},
\bauthor{\bsnm{Choromanski}, \binits{K.}},
\bauthor{\bsnm{Scheinberg}, \binits{K.}}:
\batitle{A theoretical and empirical comparison of gradient approximations in
  derivative-free optimization}.
\bjtitle{Found. Comput. Math.}
\bvolume{22}(\bissue{2}),
\bfpage{507}--\blpage{560}
(\byear{2022})
\end{barticle}
\endbibitem

\bibitem{Winfield1969}
\begin{botherref}
\oauthor{\bsnm{Winfield}, \binits{D.}}:
{Function and Functional Optimization by Interpolation in Data Tables}.
PhD thesis,
Harvard University,
Cambridge
(1969)
\end{botherref}
\endbibitem

\bibitem{Powell2003}
\begin{barticle}
\bauthor{\bsnm{Powell}, \binits{M.J.D.}}:
\batitle{On trust region methods for unconstrained minimization without
  derivatives}.
\bjtitle{Math. Program.}
\bvolume{97},
\bfpage{605}--\blpage{623}
(\byear{2003})
\end{barticle}
\endbibitem

\bibitem{Leastf}
\begin{barticle}
\bauthor{\bsnm{Powell}, \binits{M.J.D.}}:
\batitle{Least {Frobenius} norm updating of quadratic models that satisfy
  interpolation conditions}.
\bjtitle{Math. Program.}
\bvolume{100},
\bfpage{183}--\blpage{215}
(\byear{2004})
\end{barticle}
\endbibitem

\bibitem{powell2006newuoa}
\begin{bchapter}
\bauthor{\bsnm{Powell}, \binits{M.J.D.}}:
\bctitle{The {NEWUOA} software for unconstrained optimization without
  derivatives}.
In: \beditor{\bsnm{Pillo}, \binits{G.}},
\beditor{\bsnm{Roma}, \binits{M.}} (eds.)
\bbtitle{Large-scale Nonlinear Optimization},
pp. \bfpage{255}--\blpage{297}.
\bpublisher{Springer},
\blocation{Boston}
(\byear{2006})
\end{bchapter}
\endbibitem

\bibitem{Conn2008b}
\begin{barticle}
\bauthor{\bsnm{Conn}, \binits{A.}},
\bauthor{\bsnm{Scheinberg}, \binits{K.}},
\bauthor{\bsnm{Vicente}, \binits{L.N.}}:
\batitle{Geometry of sample sets in derivative free optimization. part ii:
  polynomial regression and underdetermined interpolation}.
\bjtitle{IMA J. Numer. Anal.}
\bvolume{28},
\bfpage{721}--\blpage{748}
(\byear{2008})
\end{barticle}
\endbibitem

\bibitem{Conn2009a}
\begin{barticle}
\bauthor{\bsnm{Conn}, \binits{A.}},
\bauthor{\bsnm{Scheinberg}, \binits{K.}},
\bauthor{\bsnm{Vicente}, \binits{L.N.}}:
\batitle{Global convergence of general derivative-free trust-region algorithms
  to first- and second-order critical points}.
\bjtitle{SIAM J. Optim.}
\bvolume{20},
\bfpage{387}--\blpage{415}
(\byear{2009})
\end{barticle}
\endbibitem

\bibitem{xie2023h2}
\begin{botherref}
\oauthor{\bsnm{Xie}, \binits{P.}},
\oauthor{\bsnm{Yuan}, \binits{Y.-x.}}:
Least {$H^2$} norm updating quadratic interpolation model function for
  derivative-free trust-region algorithms.
arXiv:2302.12017
(2023)
\end{botherref}
\endbibitem

\bibitem{xie2dmosb}
\begin{botherref}
\oauthor{\bsnm{Xie}, \binits{P.}},
\oauthor{\bsnm{Yuan}, \binits{Y.-x.}}:
A new two-dimensional model-based subspace method for large-scale unconstrained
  derivative-free optimization: {2D-MoSub}.
arXiv:2309.14855
(2023)
\end{botherref}
\endbibitem

\bibitem{xiesufficient}
\begin{botherref}
\oauthor{\bsnm{Xie}, \binits{P.}}:
Sufficient conditions for distance reduction between the minimizers of
  non-convex quadratic functions in the trust region.
arXiv:2310.08603
(2023)
\end{botherref}
\endbibitem

\bibitem{Mattias2000}
\begin{barticle}
\bauthor{\bsnm{Bj{\"o}rkman}, \binits{M.}},
\bauthor{\bsnm{Holmstr{\"o}m}, \binits{K.}}:
\batitle{Global optimization of costly nonconvex functions using radial basis
  functions}.
\bjtitle{Optim. Eng.}
\bvolume{1},
\bfpage{373}--\blpage{397}
(\byear{2000})
\end{barticle}
\endbibitem

\bibitem{ORBIT}
\begin{barticle}
\bauthor{\bsnm{Wild}, \binits{S.M.}},
\bauthor{\bsnm{Regis}, \binits{R.G.}},
\bauthor{\bsnm{Shoemaker}, \binits{C.A.}}:
\batitle{{ORBIT}: Optimization by radial basis function interpolation in
  trust-regions}.
\bjtitle{SIAM J. Sci. Comput.}
\bvolume{30}(\bissue{6}),
\bfpage{3197}--\blpage{3219}
(\byear{2008})
\end{barticle}
\endbibitem

\bibitem{xieyuancombination}
\begin{barticle}
\bauthor{\bsnm{Xie}, \binits{P.}},
\bauthor{\bsnm{Yuan}, \binits{Y.-x.}}:
\batitle{A derivative-free optimization algorithm combining line-search and
  trust-region techniques}.
\bjtitle{Chin. Ann. Math. Ser. B}
\bvolume{44}(\bissue{5}),
\bfpage{719}--\blpage{734}
(\byear{2023})
\end{barticle}
\endbibitem

\bibitem{bandeira2014convergence}
\begin{barticle}
\bauthor{\bsnm{Bandeira}, \binits{A.S.}},
\bauthor{\bsnm{Scheinberg}, \binits{K.}},
\bauthor{\bsnm{Vicente}, \binits{L.N.}}:
\batitle{Convergence of trust-region methods based on probabilistic models}.
\bjtitle{SIAM J. Optim.}
\bvolume{24}(\bissue{3}),
\bfpage{1238}--\blpage{1264}
(\byear{2014})
\end{barticle}
\endbibitem

\bibitem{gratton2018complexity}
\begin{barticle}
\bauthor{\bsnm{Gratton}, \binits{S.}},
\bauthor{\bsnm{Royer}, \binits{C.W.}},
\bauthor{\bsnm{Vicente}, \binits{L.N.}},
\bauthor{\bsnm{Zhang}, \binits{Z.}}:
\batitle{Complexity and global rates of trust-region methods based on
  probabilistic models}.
\bjtitle{IMA J. Numer. Anal.}
\bvolume{38}(\bissue{3}),
\bfpage{1579}--\blpage{1597}
(\byear{2018})
\end{barticle}
\endbibitem

\bibitem{SimulatedAnnealing01}
\begin{bbook}
\bauthor{\bsnm{Van~Laarhoven}, \binits{P.J.M.}},
\bauthor{\bsnm{Aarts}, \binits{E.H.L.}}:
\bbtitle{{Simulated Annealing: Theory and Applications}}.
\bpublisher{Springer},
\blocation{Dordrecht}
(\byear{1987})
\end{bbook}
\endbibitem

\bibitem{Bayes01}
\begin{bchapter}
\bauthor{\bsnm{Pelikan}, \binits{M.}},
\bauthor{\bsnm{Goldberg}, \binits{D.E.}},
\bauthor{\bsnm{Cant{\'u}-Paz}, \binits{E.}}:
\bctitle{{BOA}: The {Bayesian} optimization algorithm}.
In: \bbtitle{Proceedings of the Genetic and Evolutionary Computation Conference
  GECCO-99},
vol. \bseriesno{1},
pp. \bfpage{525}--\blpage{532}.
\bpublisher{Morgan Kaufmann Publishers},
\blocation{Burlington}
(\byear{1999})
\end{bchapter}
\endbibitem

\bibitem{gratton2015direct}
\begin{barticle}
\bauthor{\bsnm{Gratton}, \binits{S.}},
\bauthor{\bsnm{Royer}, \binits{C.W.}},
\bauthor{\bsnm{Vicente}, \binits{L.N.}},
\bauthor{\bsnm{Zhang}, \binits{Z.}}:
\batitle{Direct search based on probabilistic descent}.
\bjtitle{SIAM J. Optim.}
\bvolume{25}(\bissue{3}),
\bfpage{1515}--\blpage{1541}
(\byear{2015})
\end{barticle}
\endbibitem

\bibitem{ghadimi2013stochastic}
\begin{barticle}
\bauthor{\bsnm{Ghadimi}, \binits{S.}},
\bauthor{\bsnm{Lan}, \binits{G.}}:
\batitle{Stochastic first-and zeroth-order methods for nonconvex stochastic
  programming}.
\bjtitle{SIAM J. Optim.}
\bvolume{23}(\bissue{4}),
\bfpage{2341}--\blpage{2368}
(\byear{2013})
\end{barticle}
\endbibitem

\bibitem{larson2016batch}
\begin{barticle}
\bauthor{\bsnm{Larson}, \binits{J.}},
\bauthor{\bsnm{Wild}, \binits{S.M.}}:
\batitle{A batch, derivative-free algorithm for finding multiple local minima}.
\bjtitle{Optim. Eng.}
\bvolume{17}(\bissue{1}),
\bfpage{205}--\blpage{228}
(\byear{2016})
\end{barticle}
\endbibitem

\bibitem{zhang2010derivative}
\begin{barticle}
\bauthor{\bsnm{Zhang}, \binits{H.}},
\bauthor{\bsnm{Conn}, \binits{A.R.}},
\bauthor{\bsnm{Scheinberg}, \binits{K.}}:
\batitle{A derivative-free algorithm for least-squares minimization}.
\bjtitle{SIAM J. Optim.}
\bvolume{20}(\bissue{6}),
\bfpage{3555}--\blpage{3576}
(\byear{2010})
\end{barticle}
\endbibitem

\bibitem{cartis2019derivative}
\begin{barticle}
\bauthor{\bsnm{Cartis}, \binits{C.}},
\bauthor{\bsnm{Roberts}, \binits{L.}}:
\batitle{A derivative-free {Gauss}--{Newton} method}.
\bjtitle{Math. Program. Comput.}
\bvolume{11}(\bissue{4}),
\bfpage{631}--\blpage{674}
(\byear{2019})
\end{barticle}
\endbibitem

\bibitem{Xie_2023_ellipsoid}
\begin{barticle}
\bauthor{\bsnm{Xie}, \binits{P.}}:
\batitle{A derivative-free trust-region method for optimization on the
  ellipsoid}.
\bjtitle{J. Phys. Conf. Ser.}
\bvolume{2620}(\bissue{1}),
\bfpage{012007}
(\byear{2023})
\end{barticle}
\endbibitem

\bibitem{larson2019derivative}
\begin{barticle}
\bauthor{\bsnm{Larson}, \binits{J.}},
\bauthor{\bsnm{Menickelly}, \binits{M.}},
\bauthor{\bsnm{Wild}, \binits{S.M.}}:
\batitle{Derivative-free optimization methods}.
\bjtitle{Acta Numer.}
\bvolume{28},
\bfpage{287}--\blpage{404}
(\byear{2019})
\end{barticle}
\endbibitem

\bibitem{zhang2021}
\begin{bchapter}
\bauthor{\bsnm{Zhang}, \binits{Z.}}:
\bctitle{Derivative-free optimization}.
In: \beditor{\bsnm{Yuan}, \binits{Y.-x.}} (ed.)
\bbtitle{China Discipline Development Strategy: Mathematical Optimization (in
  Chinese)},
pp. \bfpage{84}--\blpage{92}.
\bpublisher{Science Press},
\blocation{Beijing}
(\byear{2021})
\end{bchapter}
\endbibitem

\bibitem{rios2013derivative}
\begin{barticle}
\bauthor{\bsnm{Rios}, \binits{L.M.}},
\bauthor{\bsnm{Sahinidis}, \binits{N.V.}}:
\batitle{Derivative-free optimization: a review of algorithms and comparison of
  software implementations}.
\bjtitle{J. Glob. Optim.}
\bvolume{56}(\bissue{3}),
\bfpage{1247}--\blpage{1293}
(\byear{2013})
\end{barticle}
\endbibitem

\bibitem{CMA-ES}
\begin{barticle}
\bauthor{\bsnm{Hansen}, \binits{N.}},
\bauthor{\bsnm{M{\"u}ller}, \binits{S.D.}},
\bauthor{\bsnm{Koumoutsakos}, \binits{P.}}:
\batitle{Reducing the time complexity of the derandomized evolution strategy
  with covariance matrix adaptation ({CMA-ES})}.
\bjtitle{Evol. Comput.}
\bvolume{11}(\bissue{1}),
\bfpage{1}--\blpage{18}
(\byear{2003})
\end{barticle}
\endbibitem

\bibitem{new-2}
\begin{botherref}
\oauthor{\bsnm{Scheinberg}, \binits{K.}}:
Manual for {Fortran} software package {DFO} version 2.0.
Technical report
(2003)
\end{botherref}
\endbibitem

\bibitem{new-4}
\begin{botherref}
\oauthor{\bsnm{Kelley}, \binits{C.T.}}:
Users guide for {IMFIL} version 1.0.
Technical report
(2011)
\end{botherref}
\endbibitem

\bibitem{TOLMIN}
\begin{barticle}
\bauthor{\bsnm{Powell}, \binits{M.J.D.}}:
\batitle{A tolerant algorithm for linearly constrained optimization
  calculations}.
\bjtitle{Math. Program.}
\bvolume{45},
\bfpage{547}--\blpage{566}
(\byear{1989})
\end{barticle}
\endbibitem

\bibitem{COBYlA}
\begin{bchapter}
\bauthor{\bsnm{Powell}, \binits{M.J.D.}}:
\bctitle{A direct search optimization method that models the objective and
  constraint functions by linear interpolation}.
In: \beditor{\bsnm{Gomez}, \binits{S.}},
\beditor{\bsnm{Hennart}, \binits{J.-P.}} (eds.)
\bbtitle{Advances in Optimization and Numerical Analysis},
pp. \bfpage{51}--\blpage{67}.
\bpublisher{Springer},
\blocation{Dordrecht}
(\byear{1994})
\end{bchapter}
\endbibitem

\bibitem{UOBYQA}
\begin{barticle}
\bauthor{\bsnm{Powell}, \binits{M.J.D.}}:
\batitle{{UOBYQA}: unconstrained optimization by quadratic approximation}.
\bjtitle{Math. Program., Ser. B}
\bvolume{92},
\bfpage{555}--\blpage{582}
(\byear{2002})
\end{barticle}
\endbibitem

\bibitem{Powell2009}
\begin{botherref}
\oauthor{\bsnm{Powell}, \binits{M.J.D.}}:
The {BOBYQA} algorithm for bound constrained optimization without derivatives.
Technical report,
University of Cambridge
(2009)
\end{botherref}
\endbibitem

\bibitem{powell2015fast}
\begin{barticle}
\bauthor{\bsnm{Powell}, \binits{M.J.D.}}:
\batitle{On fast trust region methods for quadratic models with linear
  constraints}.
\bjtitle{Math. Program. Comput.}
\bvolume{7}(\bissue{3}),
\bfpage{237}--\blpage{267}
(\byear{2015})
\end{barticle}
\endbibitem

\bibitem{cartis2019improving}
\begin{barticle}
\bauthor{\bsnm{Cartis}, \binits{C.}},
\bauthor{\bsnm{Fiala}, \binits{J.}},
\bauthor{\bsnm{Marteau}, \binits{B.}},
\bauthor{\bsnm{Roberts}, \binits{L.}}:
\batitle{Improving the flexibility and robustness of model-based
  derivative-free optimization solvers}.
\bjtitle{ACM Trans. Math.}
\bvolume{45}(\bissue{3}),
\bfpage{1}--\blpage{41}
(\byear{2019})
\end{barticle}
\endbibitem

\bibitem{cartis2023scalable}
\begin{barticle}
\bauthor{\bsnm{Cartis}, \binits{C.}},
\bauthor{\bsnm{Roberts}, \binits{L.}}:
\batitle{Scalable subspace methods for derivative-free nonlinear least-squares
  optimization}.
\bjtitle{Math. Program.}
\bvolume{199}(\bissue{1-2}),
\bfpage{461}--\blpage{524}
(\byear{2023})
\end{barticle}
\endbibitem

\bibitem{Ragonneau_Zhang_2021}
\begin{botherref}
\oauthor{\bsnm{Ragonneau}, \binits{T.M.}},
\oauthor{\bsnm{Zhang}, \binits{Z.}}:
PDFO: a cross-platform package for Powell's derivative-free optimization
  solvers.
arXiv:2302.13246
(2023)
\end{botherref}
\endbibitem

\bibitem{Ragonneau_2022}
\begin{botherref}
\oauthor{\bsnm{Ragonneau}, \binits{T.M.}}:
Model-based derivative-free optimization methods and software.
PhD thesis,
Department of Applied Mathematics, The Hong Kong Polytechnic University,
Hong Kong
(2022)
\end{botherref}
\endbibitem

\bibitem{Xie_2023_NEWUOA_Matlab}
\begin{botherref}
\oauthor{\bsnm{Xie}, \binits{P.}}:
{NEWUOA-Matlab-Version-2.0}
(2022)
\end{botherref}
\endbibitem

\bibitem{Xie_2023_BOBYQA_Matlab}
\begin{botherref}
\oauthor{\bsnm{Xie}, \binits{P.}}:
{BOBYQA-Matlab-Version-1.0}
(2023)
\end{botherref}
\endbibitem

\bibitem{Zhang_2023_PRIMA}
\begin{botherref}
\oauthor{\bsnm{Zhang}, \binits{Z.}}:
{PRIMA}: reference implementation for {Powell}'s methods with modernization and
  amelioration.
available at http://www.libprima.net, DOI: 10.5281/zenodo.8052654
(2023)
\end{botherref}
\endbibitem

\bibitem{privacy13}
\begin{bchapter}
\bauthor{\bsnm{Dinur}, \binits{I.}},
\bauthor{\bsnm{Nissim}, \binits{K.}}:
\bctitle{Revealing information while preserving privacy}.
In: \bbtitle{Proceedings of the 22nd ACM SIGMOD-SIGACT-SIGART Symposium on
  Principles of Database Systems},
pp. \bfpage{202}--\blpage{210}.
\bpublisher{ACM},
\blocation{New York}
(\byear{2003})
\end{bchapter}
\endbibitem

\bibitem{privacy18}
\begin{bchapter}
\bauthor{\bsnm{Dwork}, \binits{C.}},
\bauthor{\bsnm{Nissim}, \binits{K.}}:
\bctitle{Privacy-preserving datamining on vertically partitioned databases}.
In: \bbtitle{Advances in Cryptology -- CRYPTO 2004},
pp. \bfpage{528}--\blpage{544}.
\bpublisher{Springer},
\blocation{Berlin}
(\byear{2004})
\end{bchapter}
\endbibitem

\bibitem{Dwork2006}
\begin{bchapter}
\bauthor{\bsnm{Dwork}, \binits{C.}},
\bauthor{\bsnm{McSherry}, \binits{F.}},
\bauthor{\bsnm{Nissim}, \binits{K.}},
\bauthor{\bsnm{Smith}, \binits{A.}}:
\bctitle{Calibrating noise to sensitivity in private data analysis}.
In: \beditor{\bsnm{Halevi}, \binits{S.}},
\beditor{\bsnm{Rabin}, \binits{T.}} (eds.)
\bbtitle{Theory of Cryptography: Third Theory of Cryptography Conference},
pp. \bfpage{265}--\blpage{284}.
\bpublisher{Springer},
\blocation{Berlin}
(\byear{2006})
\end{bchapter}
\endbibitem

\bibitem{privacy37}
\begin{botherref}
\oauthor{\bsnm{Nissim}, \binits{K.}},
\oauthor{\bsnm{Raskhodnikova}, \binits{S.}},
\oauthor{\bsnm{Smith}, \binits{A.}}:
Smooth sensitivity and sampling in private data analysis.
In: Proceedings of the 39th Annual ACM Symposium on Theory of Computing,
pp. 75--84.
ACM,
New York
\end{botherref}
\endbibitem

\bibitem{Kasiviswanathan2008}
\begin{barticle}
\bauthor{\bsnm{Kasiviswanathan}, \binits{S.P.}},
\bauthor{\bsnm{Lee}, \binits{H.K.}},
\bauthor{\bsnm{Nissim}, \binits{K.}},
\bauthor{\bsnm{Raskhodnikova}, \binits{S.}},
\bauthor{\bsnm{Smith}, \binits{A.}}:
\batitle{What can we learn privately?}
\bjtitle{SIAM J. Comput.}
\bvolume{40}(\bissue{3}),
\bfpage{793}--\blpage{826}
(\byear{2011})
\end{barticle}
\endbibitem

\bibitem{privacy35}
\begin{botherref}
\oauthor{\bsnm{McSherry}, \binits{F.}},
\oauthor{\bsnm{Talwar}, \binits{K.}}:
Mechanism design via differential privacy.
In: 48th Annual IEEE Symposium on Foundations of Computer Science (FOCS'07).
IEEE,
Piscataway
\end{botherref}
\endbibitem

\bibitem{inproceedingsDP}
\begin{bchapter}
\bauthor{\bsnm{Wang}, \binits{Y.}},
\bauthor{\bsnm{Hale}, \binits{M.}},
\bauthor{\bsnm{Egerstedt}, \binits{M.}},
\bauthor{\bsnm{Dullerud}, \binits{G.E.}}:
\bctitle{Differentially private objective functions in distributed cloud-based
  optimization}.
In: \bbtitle{2016 IEEE 55th Conference on Decision and Control (CDC)},
pp. \bfpage{3688}--\blpage{3694}.
\bpublisher{IEEE},
\blocation{Piscataway}
(\byear{2016})
\end{bchapter}
\endbibitem

\bibitem{liu2015secure}
\begin{barticle}
\bauthor{\bsnm{Liu}, \binits{J.}},
\bauthor{\bsnm{Huang}, \binits{X.}},
\bauthor{\bsnm{Liu}, \binits{J.K.}}:
\batitle{Secure sharing of personal health records in cloud computing:
  Ciphertext-policy attribute-based signcryption}.
\bjtitle{Future Gener. Comput. Syst.}
\bvolume{52},
\bfpage{67}--\blpage{76}
(\byear{2015})
\end{barticle}
\endbibitem

\bibitem{diffpri}
\begin{bchapter}
\bauthor{\bsnm{Kusner}, \binits{M.}},
\bauthor{\bsnm{Gardner}, \binits{J.}},
\bauthor{\bsnm{Garnett}, \binits{R.}},
\bauthor{\bsnm{Weinberger}, \binits{K.}}:
\bctitle{Differentially private {Bayesian} optimization}.
In: \bbtitle{Proceedings of the 32nd International Conference on Machine
  Learning},
vol. \bseriesno{37},
pp. \bfpage{918}--\blpage{927}.
\bpublisher{PMLR},
\blocation{Lille, France}
(\byear{2015})
\end{bchapter}
\endbibitem

\bibitem{Grapiglia2016}
\begin{barticle}
\bauthor{\bsnm{Grapiglia}, \binits{G.N.}},
\bauthor{\bsnm{Yuan}, \binits{J.}},
\bauthor{\bsnm{Yuan}, \binits{Y.-x.}}:
\batitle{A derivative-free trust-region algorithm for composite nonsmooth
  optimization}.
\bjtitle{Comput. Appl. Math.}
\bvolume{35}(\bissue{2}),
\bfpage{475}--\blpage{499}
(\byear{2016})
\end{barticle}
\endbibitem

\bibitem{Deng06}
\begin{bchapter}
\bauthor{\bsnm{Deng}, \binits{G.}},
\bauthor{\bsnm{Ferris}, \binits{M.C.}}:
\bctitle{Adaptation of the {UOBYQA} algorithm for noisy functions}.
In: \bbtitle{Proceedings of the 2006 Winter Simulation Conference},
pp. \bfpage{312}--\blpage{319}.
\bpublisher{IEEE},
\blocation{Piscataway}
(\byear{2006})
\end{bchapter}
\endbibitem

\bibitem{jamieson2012query}
\begin{bchapter}
\bauthor{\bsnm{Jamieson}, \binits{K.G.}},
\bauthor{\bsnm{Nowak}, \binits{R.}},
\bauthor{\bsnm{Recht}, \binits{B.}}:
\bctitle{Query complexity of derivative-free optimization}.
In: \beditor{\bsnm{Pereira}, \binits{F.}},
\beditor{\bsnm{Burges}, \binits{C.J.}},
\beditor{\bsnm{Bottou}, \binits{L.}},
\beditor{\bsnm{Weinberger}, \binits{K.Q.}} (eds.)
\bbtitle{Advances in Neural Information Processing Systems},
vol. \bseriesno{25}.
\bpublisher{Curran Associates, Inc.},
\blocation{New York}
(\byear{2012})
\end{bchapter}
\endbibitem

\bibitem{conn1997convergence}
\begin{bchapter}
\bauthor{\bsnm{Conn}, \binits{A.R.}},
\bauthor{\bsnm{Scheinberg}, \binits{K.}},
\bauthor{\bsnm{Toint}, \binits{{\relax Ph. L}.}}:
\bctitle{On the convergence of derivative-free methods for unconstrained
  optimization}.
In: \beditor{\bsnm{Iserles}, \binits{A.}},
\beditor{\bsnm{Buhmann}, \binits{M.}} (eds.)
\bbtitle{{Approximation Theory and Optimization}: Tributes to M. J. D. Powell},
pp. \bfpage{83}--\blpage{108}.
\bpublisher{Cambridge University Press},
\blocation{Cambridge}
(\byear{1997})
\end{bchapter}
\endbibitem

\bibitem{conn1997recent}
\begin{barticle}
\bauthor{\bsnm{Conn}, \binits{A.R.}},
\bauthor{\bsnm{Scheinberg}, \binits{K.}},
\bauthor{\bsnm{Toint}, \binits{{\relax Ph. L}.}}:
\batitle{Recent progress in unconstrained nonlinear optimization without
  derivatives}.
\bjtitle{Math. Program.}
\bvolume{79}(\bissue{1}),
\bfpage{397}--\blpage{414}
(\byear{1997})
\end{barticle}
\endbibitem

\bibitem{conn1998derivative}
\begin{bchapter}
\bauthor{\bsnm{Conn}, \binits{A.R.}},
\bauthor{\bsnm{Scheinberg}, \binits{K.}},
\bauthor{\bsnm{Toint}, \binits{{\relax Ph. L}.}}:
\bctitle{A derivative free optimization algorithm in practice}.
In: \bbtitle{7th AIAA/USAF/NASA/ISSMO Symposium on Multidisciplinary Analysis
  and Optimization},
p. \bfpage{4718}.
\bpublisher{AIAA},
\blocation{Reston}
(\byear{1998})
\end{bchapter}
\endbibitem

\bibitem{Powell04onupdating}
\begin{bchapter}
\bauthor{\bsnm{Powell}, \binits{M.J.D.}}:
\bctitle{On updating the inverse of a {KKT} matrix}.
In: \beditor{\bsnm{Yuan}, \binits{Y.-x.}} (ed.)
\bbtitle{Numerical Linear Algebra and Optimization},
pp. \bfpage{56}--\blpage{78}.
\bpublisher{Science Press},
\blocation{Beijing}
(\byear{2004})
\end{bchapter}
\endbibitem

\bibitem{NoceWrig06}
\begin{bbook}
\bauthor{\bsnm{Nocedal}, \binits{J.}},
\bauthor{\bsnm{Wright}, \binits{S.J.}}:
\bbtitle{Numerical Optimization}.
\bpublisher{Springer},
\blocation{Berlin}
(\byear{2006})
\end{bbook}
\endbibitem

\bibitem{more2009benchmarking}
\begin{barticle}
\bauthor{\bsnm{Mor{\'e}}, \binits{J.J.}},
\bauthor{\bsnm{Wild}, \binits{S.M.}}:
\batitle{Benchmarking derivative-free optimization algorithms}.
\bjtitle{SIAM J. on Optim.}
\bvolume{20}(\bissue{1}),
\bfpage{172}--\blpage{191}
(\byear{2009})
\end{barticle}
\endbibitem

\bibitem{Conn1994}
\begin{bchapter}
\bauthor{\bsnm{Conn}, \binits{A.R.}},
\bauthor{\bsnm{Gould}, \binits{N.}},
\bauthor{\bsnm{Lescrenier}, \binits{M.}},
\bauthor{\bsnm{Toint}, \binits{{\relax Ph. L}.}}:
\bctitle{Performance of a multifrontal scheme for partially separable
  optimization}.
In: \beditor{\bsnm{Gomez}, \binits{S.}},
\beditor{\bsnm{Hennart}, \binits{J.-P.}} (eds.)
\bbtitle{Advances in Optimization and Numerical Analysis},
pp. \bfpage{79}--\blpage{96}.
\bpublisher{Springer},
\blocation{Dordrecht}
(\byear{1994})
\end{bchapter}
\endbibitem

\bibitem{Toint1978}
\begin{barticle}
\bauthor{\bsnm{Toint}, \binits{{\relax Ph. L}.}}:
\batitle{Some numerical results using a sparse matrix updating formula in
  unconstrained optimization}.
\bjtitle{Math. Comput.}
\bvolume{32}(\bissue{143}),
\bfpage{839}--\blpage{851}
(\byear{1978})
\end{barticle}
\endbibitem

\bibitem{Li2009}
\begin{barticle}
\bauthor{\bsnm{Li}, \binits{Y.J.}},
\bauthor{\bsnm{Li}, \binits{D.H.}}:
\batitle{Truncated regularized {Newton} method for convex minimizations}.
\bjtitle{Comput. Optim. Appl.}
\bvolume{43},
\bfpage{119}--\blpage{131}
(\byear{2009})
\end{barticle}
\endbibitem

\bibitem{Luksan2010}
\begin{botherref}
\oauthor{\bsnm{Luk{\v{s}}an}, \binits{L.}},
\oauthor{\bsnm{Matonoha}, \binits{C.}},
\oauthor{\bsnm{Vlcek}, \binits{J.}}:
Modified {CUTE} problems for sparse unconstrained optimization.
Technical report,
Institute of Computer Science AS \v{C}R
(2010)
\end{botherref}
\endbibitem

\bibitem{Andrei2008}
\begin{barticle}
\bauthor{\bsnm{Andrei}, \binits{N.}}:
\batitle{An unconstrained optimization test functions collection}.
\bjtitle{Adv. Model. Optim.}
\bvolume{10}(\bissue{1}),
\bfpage{147}--\blpage{161}
(\byear{2008})
\end{barticle}
\endbibitem

\bibitem{Li1988}
\begin{barticle}
\bauthor{\bsnm{Li}, \binits{G.}}:
\batitle{The secant/finite difference algorithm for solving sparse nonlinear
  systems of equations}.
\bjtitle{SIAM J. Numer. Anal.}
\bvolume{25},
\bfpage{1181}--\blpage{1196}
(\byear{1988})
\end{barticle}
\endbibitem

\bibitem{CUTEr}
\begin{botherref}
\oauthor{\bsnm{Gould}, \binits{N.}},
\oauthor{\bsnm{Orban}, \binits{D.}},
\oauthor{\bsnm{Toint}, \binits{{\relax Ph. L}.}}:
General {CUTEr} documentation.
Technical report
(2001)
\end{botherref}
\endbibitem

\bibitem{MorTesting1981}
\begin{barticle}
\bauthor{\bsnm{Mor\'{e}}, \binits{J.J.}},
\bauthor{\bsnm{Garbow}, \binits{B.S.}},
\bauthor{\bsnm{Hillstrom}, \binits{K.E.}}:
\batitle{Testing unconstrained optimization software}.
\bjtitle{ACM Trans. Math. Softw.}
\bvolume{7}(\bissue{1}),
\bfpage{17}--\blpage{41}
(\byear{1981})
\end{barticle}
\endbibitem

\bibitem{012}
\begin{botherref}
\oauthor{\bsnm{Zhang}, \binits{Z.}}:
{The Research on Derivative-free Optimization Methods}.
PhD thesis,
Graduate School of the Chinese Academy of Sciences, University of Chinese
  Academy of Sciences,
Beijing
(2012)
\end{botherref}
\endbibitem

\bibitem{xbg2}
\begin{barticle}
\bauthor{\bsnm{Wilson}, \binits{J.D.}},
\bauthor{\bsnm{Wintucky}, \binits{E.G.}},
\bauthor{\bsnm{Vaden}, \binits{K.R.}},
\bauthor{\bsnm{Force}, \binits{D.A.}},
\bauthor{\bsnm{Krainsky}, \binits{I.L.}},
\bauthor{\bsnm{Simons}, \binits{R.N.}},
\bauthor{\bsnm{Robbins}, \binits{N.R.}},
\bauthor{\bsnm{Menninger}, \binits{W.L.}},
\bauthor{\bsnm{Dibb}, \binits{D.R.}},
\bauthor{\bsnm{Lewis}, \binits{D.E.}}:
\batitle{Advances in space traveling-wave tubes for {NASA} missions}.
\bjtitle{Proceedings of the IEEE}
\bvolume{95}(\bissue{10}),
\bfpage{1958}--\blpage{1967}
(\byear{2007})
\end{barticle}
\endbibitem

\bibitem{xbg1}
\begin{bchapter}
\bauthor{\bsnm{Levush}, \binits{B.}}:
\bctitle{The design and manufacture of vacuum electronic amplifiers: Progress
  and challenges}.
In: \bbtitle{2019 International Vacuum Electronics Conference (IVEC)},
pp. \bfpage{1}--\blpage{5}.
\bpublisher{IEEE},
\blocation{Piscataway}
(\byear{2019})
\end{bchapter}
\endbibitem

\end{thebibliography}

\end{document}